\documentclass{amsart}
\usepackage{amsmath,amssymb,amsthm,graphicx,epsfig,float,url}
\usepackage[colorlinks=true,citecolor={Plum},linkcolor={Periwinkle}]{hyperref}
\usepackage{pdfsync}
\usepackage[usenames, dvipsnames]{xcolor}
\usepackage{tikz}
\usepackage{mathtools,physics}
\usepackage{subfig}
\usepackage{stmaryrd}
\usepackage{amssymb}
\usepackage{mathrsfs}  
\usepackage{float}
\usepackage{esint}
\usepackage{verbatim}
\usepackage{geometry}
\usepackage{algorithm,algpseudocode} 
\usetikzlibrary{decorations.pathmorphing}
\usetikzlibrary{patterns.meta, plotmarks,calc}

\usepackage{dsfont}
\definecolor{cb-blue}       {RGB}{ 0, 109, 219}
\definecolor{cb-green-lime} {RGB}{ 36, 255,  36}

\newcommand{\R}{\textnormal{I\kern-0.21emR}}
\newcommand{\N}{\textnormal{I\kern-0.21emN}}
\newcommand{\Z}{\mathbb{Z}}

\def\O{{\Omega}}
\def\n{{\nabla}}
\def\p{{\varphi}}

\def\e{{\varepsilon}}

\def\fooaux#1#2{%
  \mkern2mu
  \setbox0=\hbox{\mathsurround=0pt$#1\bar{\mkern-2mu #2 \mkern2mu}$}%
  \setbox1=\hbox to \wd0{\hss$#1\cdot$\hss}%
  \vbox{\offinterlineskip\copy1\vskip-.4\ht1\box0}%
  \mkern-2mu
}

\newtheorem{theorem}{Theorem}

\newtheorem{material}{material}[section]
\newtheorem{proposition}[material]{Proposition}
\newtheorem{corollary}[material]{Corollary}
\newtheorem{definition}[material]{Definition}
\newtheorem{lemma}[material]{Lemma}

\newtheorem{remark}[material]{Remark}

\numberwithin{equation}{section}

\title{Uniqueness \& Non-uniquess for the Mean Field Control of fisheries}
\author{Greta Lamonaca}
\address{Greta Lamonaca, Universit\'e d'Orl\'eans, Universit\'e de Tours, CNRS, IDP, UMR 7013, Orl\'eans, France, CEREMADE, UMR CNRS 7534, Universit\'e Paris-Dauphine, Universit\'e PSL, Place du Mar\'echal De Lattre De Tassigny, 75775 Paris cedex 16, France} \email{greta.lamonaca@univ-orleans.fr}
\author{ Idriss Mazari-Fouquer}
\address{Idriss Mazari-Fouquer, CEREMADE, UMR CNRS 7534, Universit\'e Paris-Dauphine, Universit\'e PSL, Place du Mar\'echal De Lattre De Tassigny, 75775 Paris cedex 16, France}
\email{mazari@ceremade.dauphine.fr}
\author{Gr\'egoire Nadin}
\address{Gr\'egoire Nadin, Universit\'e d'Orl\'eans, Universit\'e de Tours, CNRS, IDP, UMR 7013, Orl\'eans, France}
\email{gregoire.nadin@cnrs.fr}
\begin{document}

\begin{abstract}
We study a Mean Field Control system arising in the management of fisheries with a special emphasis on non-uniqueness issues. Namely, we focus on a situation where a group of players coordinate in order to harvest a fishery in the most efficient way possible. A major challenge in such modelling is the coupling between the dynamics of fish population, which we model through a reaction-diffusion equation, and that of the players, which is seen through the lens of Mean Field Control. The resulting evolution system consists of four coupled equations. A central issue, both in the analysis and from the modelling perspective, is the uniqueness of solutions of this system. By focusing on the ergodic (or static) counterpart of the evolution equation, we show that one should in general expect the emergence of multiple solutions. Our approach relies on the theory of bifurcation, and the bifurcation parameter we take is the (biologically relevant) total amount of food available to the population. We also give refined uniqueness criteria that allow to bypass several limitations of previous works on this type of system \cite{zbMATH07930639}.  This fits within two growing research lines: one on the optimal harvesting of fisheries \cite{zbMATH08109759,zbMATH07930639}, one on questions of non-uniqueness in Mean Field Games and Mean Field Control \cite{zbMATH07194583,Cirant2017,zbMATH06898670}.
\end{abstract}

\maketitle

\textbf{Keywords:} Mean Field Games, Mean Field Control, Reaction Diffusion Equations, Bifurcation, Spatial Ecology.

\textbf{AMS-Subject Classification:} 35K10, 35K57, 35Q89, 35Q92, 49N80, 91A12, 91A16.

\textbf{Acknowledgement:}  This project has received funding from the European Union's Horizon Europe research and innovation programme under the Marie Sk\l odowska-Curie grant agreement No 101126554. I. M.-F. is supported by the PSL Young Researcher Starting Grant (P.I: I. M.-F.) ``Optimal control of ecological systems" and G. N., G. L. are partially supported by the ANR project ReaCh ``R\'eaction-diffusion: nouveaux Challenges". I. M.-F. and G. N. are partially supported by the ANR project STOIQUES.

\textbf{Disclaimer:}  Views and opinions expressed are however those of the author(s) only and do not necessarily reflect those of the European Union. Neither the European Union nor the granting authority can be held responsible for them.

\section{Introduction}
\subsection{Scope of the paper}\label{SubSec:Scope}
Before giving the precise model we study, let us make some comments on the scientific context of the paper.

\textbf{Motivation and main goals}
The goal of this paper is to analyse some surprising phenomena arising in the optimal management of fisheries. More specifically, we are interested in a situation where a central planner is trying to harvest a fish population in order to maximise the global yield. To do so, this planner tells agents which strategy they should follow. This question stems from considerations in conservation ecology \cite{BBC50,BBCwaste,BBC2, BBC} and is a possible formulation of the question ``What is the impact of fishing on aquatic ecosystems?". From the mathematical perspective, this fits within a growing literature on the optimal management of fisheries \cite{Braverman2009,bressan2019competitive,Bressan_2013,zbMATH07006536,zbMATH07248633} and more particularly on the use of Mean Field Games (MFG) models to understand these situations \cite{zbMATH07930639,zbMATH08109759}. From a modelling perspective, let us make the following comments: first, we will be assuming that an infinite number of agents, all following the instructions of a central planner, are engaged in the harvesting of a fish population that evolves according to a monostable logistic-diffusive equation. This leads to the study of a system of four coupled equations, the Mean Field Control (MFC) system, the uniqueness of solutions of which is the central focus of this paper. Second, we will be focusing on a \emph{stationary} version of this system, traditionally referred to as the \emph{ergodic system} (and which should describe the long-time  behaviour of the underlying optimisation problem). The main results of this paper have to do with the \emph{(non-)uniqueness} of solutions of this system. We emphasise that, as we shall discuss in Section \ref{Se:Bibliography}, this question is crucial from both mathematical and modelling perspectives on Mean Field systems (either Mean Field Games or Mean Field Control). Specifically, proving the existence of multiple solutions of MFG or MFC systems has attracted much attention in recent years  \cite{zbMATH07194583,Cirant2017,zbMATH06898670}. Here, we show that, depending on the total carrying capacity, there can exist either a unique or at least two distinct solutions of the ergodic MFC system. While such phenomena are not unknown in the MFG/MFC literature, we stress that our result shows a dramatic change in behaviour depending on biologically relevant quantities, which are used as bifurcation parameters. We refer to Section \ref{Se:Bibliography} for comparisons with the existing literature.

\textbf{Model under consideration} Although our main results are stated in the one dimensional case, some statements of independent interest hold in any dimension. We thus consider a smooth bounded domain $\O\subset \R^d$, $d\geq 1$ (when $d=1$, we take $\O=(0;\ell)$). $\O$ will be dubbed ``the lake". The fish population is modelled through a density $\theta$ and, in the absence of fishermen, is subject to a Brownian diffusion with strength $\sqrt{2d}$ ($d>0$), to a Malthusian death rate $-\theta^2$ and can access resources $k\in L^\infty(\O)$, leading to a growth rate $k\cdot\theta$. Fix a time horizon $T>0$. For a given  density of fishermen $m\in L^\infty((0;T)\times \O;\R^+)$  and some initial condition $\theta_0\geq 0,\, \not\equiv 0$, the overall evolution of the fish population is governed by the monostable equation 
\[\begin{cases}\partial_t\theta_m-d\Delta\theta_m=\theta_m\left(k-m-\theta_m\right), \ \theta_m >0 &\text{ in }(0;T)\times \O,\,
\\ \partial_\nu \theta_m=0&\text{ on }(0;T)\times \partial \O,\, 
\\ \theta_m(t=0,\cdot)=\theta_0(\cdot)&\text{ in }\O.\end{cases}\]
The fishermen are subject to some random diffusion with strength $\sqrt{2\nu}$, $\nu>0$. For a given time horizon $T>0$, a  \emph{strategy} for fishermen is a given vector field $\alpha\in L^2((0;T)\times \O)$ and, given an initial probability measure $m_0$ and some strategy $\alpha$, the fishermen density  $m_\alpha$ evolves according to the conservation law
\[\begin{cases}\partial_tm_\alpha-\nu \Delta m_\alpha +\n\cdot(\alpha m_\alpha)=0, \ \fint_\O m_\alpha =1 &\text{ in }(0;T)\times \O,\, 
\\ \partial_\nu m_\alpha=0&\text{ on }(0;T)\times \partial \O,\, 
\\ m_\alpha(t=0,\cdot)=m_0&\text{ in }\O.\end{cases}\]The strategy $\alpha$ is chosen so as to solve the optimisation problem
\begin{equation}\label{Eq:CommonPv} \sup_{\alpha \in L^2((0;T)\times \O)}\iint_{(0;T)\times \O}\left(\theta_{m_\alpha}m_\alpha-\frac{\vert \alpha\vert^2}2m_\alpha\right).\end{equation}
Note that we choose a quadratic cost of control $|\alpha|^2$; this simplifies several points of the proof. As our goal is to showcase examples of non-uniqueness stemming from the reaction-diffusion equation component of the model, we feel this also allows for more readability. 

The existence of a solution of \eqref{Eq:CommonPv} is still open; nevertheless, assuming that an optimiser $\alpha^*$ exists, one can characterise it using the solution $(u,\eta,m,\theta)$ of the following coupled system (endowed with Neumann boundary conditions on each equation):
\begin{equation}\label{Eq:MFC}
\begin{cases}
-\partial_t u-\nu\Delta u-\frac{|\n u|^2}{2}=\theta(1-\eta)&\text{ in }(0;T)\times \O,
\\ -\partial_t\eta-d\Delta \eta-\eta(k-m-2\theta)=m&\text{ in }(0;T)\times \O,
\\ \partial_tm-\nu\Delta m+\n\cdot(m\n u)=0, \ \fint_\O m =1 &\text{ in }(0;T)\times \O,
\\ \partial_t\theta-d\Delta \theta-\theta(k-m-\theta)=0, \ \theta >0 &\text{ in }(0;T)\times \O,
\\ \theta(t=0)=\theta_0,\, m(t=0)=m_0,\, u(T,\cdot)=0,\, \eta(T,\cdot)=0&\text{ in }\O.
\end{cases}
\end{equation}
Indeed, it is then possible to show that $\alpha^*=\nabla u$. Under the assumption of existence of an optimal strategy, the derivation of \eqref{Eq:MFC} follows from standard techniques, and we detail it in Appendix \ref{Ap:Derivation}. An important note is that the function $u$ is \emph{not} the value function of the optimal control problem (contrary to what happens in the competitive MFG), and the Hamilton-Jacobi equation it solves is not derived using the Carath\'eodory-Bellman dynamic programming principle. Likewise, contrary to the standard MFG/MFC setting, \eqref{Eq:MFC} involves a fourth equation on an auxiliary function $\eta$, coming from the linearisation of the reaction-diffusion equation. 

The two basic questions concerning \eqref{Eq:MFC} are existence and uniqueness of solutions. The existence of solutions is, in this diffusive setting (\emph{i.e.} with $\nu>0$) always guaranteed by a standard fixed point argument which we do not detail. The uniqueness, on the other hand, is a much trickier endeavour, and the focus of our analysis. To simplify matters further, we consider the ergodic version of \eqref{Eq:MFC}: at a formal level, we expect that, as $T\to +\infty$, there should hold (see Remark \ref{Re:Ergodic})
\[ u_T(t,x)\sim \overline \lambda+\bar u(x),\, m_T\sim \bar m,\, \eta_T\sim\bar\eta,\, \theta_T\sim\bar\theta,\] where $(\bar \lambda,\bar u,\bar \eta,\bar m,\bar \theta)$ solves the so-called ``ergodic system"
\begin{equation}\label{Eq:EMFC}
    \begin{cases}
        \bar\lambda - \nu \Delta \bar u -\frac{{|\nabla \bar u|}^2}{2}= \bar \theta (1-\bar \eta) \quad &\text{in } \ \Omega,\\
         -d\Delta \bar \eta -\bar \eta (k - \bar m -2 \bar \theta)=\bar m,  & \text{in } \ \Omega, \\
        -\nu \Delta \bar m +\nabla \cdot (\bar m \, \nabla \bar u)=0, & \text{in } \ \Omega,\\
        -d\Delta \bar \theta -\bar \theta (k(x) -\bar m -\bar \theta)=0, \, \bar \theta>0 & \text{in } \ \Omega,\\
        \fint_\Omega \bar u =0, \quad \fint_\Omega \bar m =1.
    \end{cases}
\end{equation}
Our paper presents results about the (non-)uniqueness of solutions of this ergodic system. These results rely on bifurcation techniques. Roughly speaking, we state that if $\O$ is one-dimensional and if $k$ is constant (and can be thought of as the total amount of resources available to the population) then (Theorem  \ref{Th:NonUniqueness}) if $k$ is small enough, \eqref{Eq:EMFC} has multiple solutions whereas (Theorem \ref{Th:Uniqueness}), if $k$ is large enough, \eqref{Eq:EMFC} has a unique solution. 
\begin{remark}\label{Re:Ergodic}Some comments are in order:
\begin{enumerate}
\item Modelling-wise, there is a gap in our analysis: proving rigorously that $(u_T,\eta_T,m_T,\theta_T)$ stabilises as $T\to +\infty$ can only be done using arguments that would give uniqueness for the evolution system \eqref{Eq:MFC} \cite{CardaliaguetKAM,CardaliaguetPorretta,zbMATH07399451,zbMATH08109759}. 
\item As will be commented further upon, the non-uniqueness for solutions of \eqref{Eq:EMFC} is structurally tied to the cooperative (MFC) setting. We do not expect this non-uniqueness result to be true in the competitive (MFG) setting, see Section \ref{Se:CommentsNU}.\end{enumerate}
\end{remark}

\subsection{Main model and main results}\label{Se:Main}
It will be convenient to use the Hopf-Cole transform to get rid of one of the equations in \eqref{Eq:MFC}. Namely, introduce
\[ \bar\p:=e^{\frac{\bar u}{2\nu}},\] so that 
\[-2\nu^2\Delta \bar\p-\bar\theta(1-\bar\eta)\bar\p=-\bar\lambda\bar \p.\] Likewise, solving the equation on $\bar m$ explicitly, we obtain $\bar m=\bar\p^2$, and \eqref{Eq:MFC} reduces to the system of three equations
\begin{equation}\label{EMFC}\tag{S}
    \begin{cases}
        -2\nu^2 \Delta \bar\p - \bar\theta (1-\bar \eta) \bar\p = -\bar \lambda \bar\p, &\text{in } \ \O,\, 
        \\ \ \fint_\Omega \bar\p^2 =1, \ \bar\p >0 &\text{in } \ \Omega,\\
        -d\Delta \bar \theta -\bar \theta (k -\bar\p^2 -\bar \theta)=0, \ \bar\theta>0 &\text{in } \ \Omega,\\
        -d\Delta \bar \eta -\bar \eta (k-\bar\p^2 - 2\bar \theta)=\bar\p^2 &\text{in } \ \Omega,
        \\ \partial_\nu\bar \p=\partial_\nu\bar\theta=\partial_\nu\bar\eta=0 &\text{on }\partial \O.
    \end{cases}
\end{equation}
Henceforth, we make the assumption that 
\[ \fint_\O k>\fint_\O \bar\p^2=\fint_\O \bar m=1;\]this condition guarantees \cite{BHR} that for any $m$ such that $\fint_\O m=1$ there exists a unique positive solution $\theta$ of 
\[-d\Delta \theta-\theta(k-m-\theta)=0,\] and can be interpreted as saying there are more resources available than fishermen. 

\subsubsection{A uniqueness result} The core of the paper, as we mentioned, is the non-uniqueness of solutions of \eqref{EMFC}; in order to highlight that one can not expect non-uniqueness in general, we begin with a uniqueness result:
 \begin{theorem}[Uniqueness of solution of \eqref{EMFC}]\label{Th:Uniqueness}
     For any $d, \nu,\ell>0$ there exists $k_0>1$ such that for any $k\ge k_0$ the unique solution of \eqref{EMFC}, set in $\O=(0;\ell)$, is \eqref{constant_solution}.
    \end{theorem}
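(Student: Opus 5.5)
The plan is to treat $k\to+\infty$ as a singular limit, show that every solution of \eqref{EMFC} converges to a solution of a limiting scalar problem whose only positive solution is constant, and finish with a local uniqueness argument around the constant solution \eqref{constant_solution}. For constant $k$, this constant solution is $\bar\p\equiv1$, $\bar\theta\equiv k-1$, $\bar\eta\equiv\frac1{k-1}$, $\bar\lambda=k-2$. I argue by contradiction: assume there are $k_n\to+\infty$ and solutions $(\bar\lambda_n,\bar\p_n,\bar\theta_n,\bar\eta_n)$ of \eqref{EMFC} that are not constant.

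\textbf{Step 1: uniform a priori estimates.} I write $\bar\theta_n=k_n-\bar\p_n^2+r_n$ and $\bar\eta_n=s_n/k_n$, and aim to show $r_n=O(1/k_n)$ and $s_n=\bar\p_n^2+O(1/k_n)$ in $L^\infty$.
\begin{itemize}
\item First I need an $L^\infty$ bound on $\bar\p_n$. Since $\fint\bar\p_n^2=1$, one-dimensional Sobolev embedding reduces this to an $H^1$ bound. I expect to get it by testing the $\bar\p$-equation against $\bar\p_n$, after checking that $\bar\theta_n(1-\bar\eta_n)-\bar\lambda_n$ stays bounded from above.
\item Given the $L^\infty$ bound on $\bar\p_n^2$, the $\bar\theta$-equation is a logistic equation with a huge growth rate $k_n$. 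Comparison and maximum principle arguments then give $\bar\theta_n=k_n-\bar\p_n^2+O(1/k_n)$.
\item The $\bar\eta$-equation reads $-d\bar\eta_n''+\bar\eta_n(k_n+\bar\p_n^2-2r_n)=\bar\p_n^2$. Since its zeroth-order coefficient is of order $k_n$, the maximum principle gives $\bar\eta_n=\bar\p_n^2/k_n+O(k_n^{-2})$.
\end{itemize}
Combining these, the potential in the $\bar\p$-equation is $\bar\theta_n(1-\bar\eta_n)=k_n-2\bar\p_n^2+O(1/k_n)$.

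\textbf{Step 2: the limit problem.} With $\mu_n:=k_n-\bar\lambda_n$, the $\bar\p$-equation becomes
\[-2\nu^2\bar\p_n''+2\bar\p_n^3=\mu_n\bar\p_n+O(1/k_n).\]
Integrating against $\bar\p_n$ bounds $\mu_n$, so up to a subsequence $\bar\p_n\to\p_\infty$ in $C^2$. The limit $\p_\infty>0$ solves $-2\nu^2\p_\infty''+2\p_\infty^3=\mu\p_\infty$ with Neumann conditions and $\fint\p_\infty^2=1$. To see that $\p_\infty$ is constant, I use that it is a positive critical point of
\[\int\nu^2|\p'|^2+\tfrac12\int\p^4\quad\text{on }\left\{\fint\p^2=1\right\}.\]
In the variable $\rho=\p^2$ this becomes $\int\nu^2\frac{|\rho'|^2}{4\rho}+\frac12\int\rho^2$, which is strictly convex on the convex set $\{\fint\rho=1\}$ (the Benguria--Brezis--Lieb convexity of $\rho\mapsto\int|\nabla\sqrt\rho|^2$). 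Hence there is a unique positive critical point, namely $\p_\infty\equiv1$ with $\mu=2$. Alternatively, one can use a phase-plane argument: $f(\p)=\tfrac{\mu}{2\nu^2}\p-\tfrac1{\nu^2}\p^3$ has $f'<0$ at its positive zero, so no nonconstant positive Neumann solution oscillates around it.

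\textbf{Step 3: local uniqueness, the main obstacle.} It remains to show that, uniformly for $k$ large, no nonconstant solution lies in a small $C^2$ neighbourhood of \eqref{constant_solution}, measured in the rescaled variables $(\p,\ r,\ s,\ k-\lambda)$. I would do this with an implicit function theorem in these rescaled variables, with $\varepsilon=1/k$ as a parameter. In this scaling the rescaled system should extend continuously to $\varepsilon=0$, where it reduces to the limit problem of Step 2 coupled with the algebraic relations $r=0$, $s=\p^2$.

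The key point is the invertibility of the linearisation at $\varepsilon=0$ around $\p\equiv1$, with the constraint $\fint\p^2=1$. Formally it reduces to $-2\nu^2h''+4h=c$ together with $\fint h=0$, where $h$ is the perturbation of $\p$ and $c$ that of $\mu$ (the $\mu h$ and $6h$ contributions combine to $4h$ at $\mu=2$). Since $4>0$, this operator is invertible. The delicate part is that the singular dependence on $k$ through $\bar\theta$ and $\bar\eta$ is not a priori smooth at $\varepsilon=0$. I will handle it by solving the $\bar\theta$- and $\bar\eta$-equations for $r,s$ as functions of $(\p,\varepsilon)$, with $C^1$ estimates uniform in $\varepsilon$ coming from the large zeroth-order coefficient, and then applying the implicit function theorem to the resulting reduced scalar equation in $\p$. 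This yields uniqueness in a uniform neighbourhood, which contradicts the convergence $\bar\p_n\to1$ from Step 2.
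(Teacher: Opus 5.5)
Your argument is correct in outline, but it takes a genuinely different route from the paper. The paper never passes to the limit $k\to+\infty$. Theorem \ref{Th:generaluniqueness} shows that the Lasry--Lions condition \eqref{static_monotonicity} holds as soon as every solution satisfies $1-\bar\eta>0$ and $d\,|p'|^2/p^3\le 4$, with $p=\bar\theta/(1-\bar\eta)$. The proof differentiates along $\tau\bar\p_1^2+(1-\tau)\bar\p_2^2$ and uses that $\bar\theta$ is the principal eigenfunction of $-d\Delta-(k-\bar\p^2-\bar\theta)$. Proposition \ref{Prop:Intermediate} then verifies this pointwise gradient bound for large $k$ through explicit Bai--He--Li type estimates. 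That route yields an explicit threshold $k_0=\max\{k_2,k_3\}$ and the monotonicity structure itself.

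Your compactness/contradiction argument gives only a non-explicit $k_0$, but it explains the result conceptually. The limit problem is the ergodic system with the monotone local coupling $k-2m$, whose only positive solution is $\p\equiv1$; your convexity argument in $\rho=\p^2$ is right. You also need only $O(1)$ bounds plus a nondegeneracy computation, rather than the sharp estimate $d|p'|^2/p^3\le 4$.

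Step 3 does go through. Write $V_k[\p]:=k-\bar\theta(1-\bar\eta)$ and $g=\delta(\bar\p^2)$. The maximum principle gives $\dot{\bar\theta}=-g+O(\|g\|_{C^2}/k)$ and $k\dot{\bar\eta}=g+O(\|g\|_{C^2}/k)$ in $L^\infty$. Hence $D_\p V_k\to 4\p\,\cdot$ in $\mathcal L(C^2,C^0)$, uniformly near $\p\equiv 1$. The limiting linearisation $(h,c)\mapsto(-2\nu^2h''+4h-c,\fint h)$ is invertible, so a mean-value argument gives injectivity on a fixed neighbourhood for all large $k$.

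The step you underestimate is the first bullet of Step 1. Directly, you only get
\[
\sup\bigl(\bar\theta(1-\bar\eta)-\bar\lambda\bigr)\le k-\bar\lambda\le 1+\fint\bar\theta\bar\eta .
\]
Controlling $\fint\bar\theta\bar\eta$ requires a lower bound on $\bar\theta$, i.e. $\|\bar\p\|^2_{L^\infty}=o(k)$, which is what you are trying to prove. You need a bootstrap:
\begin{enumerate}
\item Get a crude bound $-\bar\lambda=O(k)$ from the Rayleigh quotient with a test function built on $\bar\theta$. The paper uses $\bar\theta/\sqrt{1+\bar\eta}$. Plain $\bar\theta$ also works, using $\int\bar\eta\bar\theta^2=\int\bar\theta\bar\p^2$, $\bar\theta\le k$ and $\fint\bar\theta\ge k-1$.
\item Deduce $\|\bar\p\|^2_{L^\infty}=O(\sqrt k)$.
\item By the maximum principle, $\bar\theta\ge k-\|\bar\p\|^2_{L^\infty}$ and $\bar\eta\le 2\|\bar\p\|^2_{L^\infty}/k$.
\item Feed these back in; the estimate closes into a uniform $H^1$ bound.
\end{enumerate}
This is exactly the content of Lemmas \ref{Prop:lambda}--\ref{Prop:1-bareta}.

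Similarly, the expansion $\bar\theta_n=k_n-\bar\p_n^2+O(1/k_n)$ needs a uniform $W^{2,\infty}$ bound on $\bar\p_n$. You should insert this intermediate step explicitly. It follows from $0\le k_n-\bar\lambda_n=O(1)$, $\bar\theta_n\in[k_n-K,k_n]$ and $0<\bar\eta_n=O(1/k_n)$. Finally, the zeroth-order coefficient in your $\bar\eta$-equation should be $k_n-\bar\p_n^2+2r_n$; this slip is harmless.
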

    The conditions that $k,\ell,\, d,\, \nu$ must satisfy are derived in Section \ref{Sec:ProofTh.Uniqueness}, see Proposition \ref{Prop:Intermediate}. We note that a key quantity is the sign of  $1-\bar\eta$, which can be interpreted as a measure of ecosystem balance or imbalance (we refer to Section \ref{Se:CommentsNU}). This uniqueness result fits in the existing literature devoted to fishing MFG  \cite{zbMATH08109759}: the strategy consists in finding regimes where the Lasry-Lions monotonicity condition (see \eqref{monotonicitynonlocalinspace} below) holds. Our proof, however, relies on fine estimates of solutions of reaction-diffusion equations, and thus allows to bypass the perturbative setting of \cite{zbMATH08109759}. This comes at the cost of a dimensional restriction. Namely, such estimates can be derived in one-dimension following the work of Bai, He \& Li \cite{BHL2015}, although we hope that our result can be extended to the higher dimensional setting. Nevertheless, we also give a conditional result (see Theorem \ref{Th:generaluniqueness} below) which essentially states that, if adequate estimates are satisfied by the solutions, then these solutions are unique.

\subsubsection{Emergence of multiple solutions in one dimension and comments}
Recall that $k$ is constant, $k>1$. Observe that, in this case, a particular solution of \eqref{EMFC} is given by 
\begin{equation}\label{constant_solution}
        \left(\bar\theta,\bar\eta, \bar \lambda,\bar\p\right)=\left(k-1, \frac{1}{k-1}, k-2, 1\right).
    \end{equation}
    This solution is referred to as the ``constant" or ``trivial" solution.
We can now state our main non-uniqueness result:
    \begin{theorem}[Non-uniqueness of solutions of \eqref{EMFC}]\label{Th:NonUniqueness} 
    For any $d, \nu >0$, there exists $k^*\in (1;2)$ such that, for any $k_1\in (1;k^*)$,  there exists $\ell(k_1)>0$ such that for any $\epsilon >0$ there exists $k\in \left( k_1-\epsilon; k_1 +\epsilon\right)$ for which system \eqref{EMFC} admits a $\ell(k_1)$-periodic non-constant solution.
    \end{theorem}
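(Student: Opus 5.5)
The plan is to obtain the non-constant solutions by bifurcation from the constant branch \eqref{constant_solution}, with $k$ as the bifurcation parameter, in a space of $\ell$-periodic even functions. I would aim for a Krasnoselskii-type result (an odd crossing number, i.e. a change of Leray--Schauder index) rather than Crandall--Rabinowitz. The reason is that the statement only asserts non-constant solutions for \emph{some} $k$ arbitrarily close to $k_1$, which is exactly what a change-of-index argument delivers without a transversality computation.

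\textbf{Functional setting.} I fix $\ell>0$ and write
\[\bar\p=1+\psi,\quad \bar\theta=k-1+\tau,\quad \bar\eta=\tfrac1{k-1}+\zeta,\quad \bar\lambda=k-2+\mu.\]
The unknowns $(\psi,\tau,\zeta)$ lie in $C^{2,\alpha}$ functions that are $\ell$-periodic and even, and $\mu\in\R$. The normalisation $\fint\bar\p^2=1$ is kept as an extra scalar equation that determines $\mu$. Inverting $-d\Delta+(k-1)$, $-d\Delta+(k-1)$ and $-2\nu^2\Delta+1$ (the last one after adding $\psi$ to both sides), I rewrite \eqref{EMFC} as $U=T(k,U)$ with $T$ compact. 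The trivial branch is then $U=0$ for every $k>1$.

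\textbf{Linearisation and dispersion relation.} Linearising at the trivial solution gives
\[\begin{aligned}
-2\nu^2\Delta\psi-\tfrac{k-2}{k-1}\tau+(k-1)\zeta+\mu&=0,\\
-d\Delta\tau+(k-1)(2\psi+\tau)&=0,\\
-d\Delta\zeta+(k-1)\zeta+\tfrac{2(\psi+\tau)}{k-1}&=2\psi,
\end{aligned}\]
together with $\fint\psi=0$. The mean mode forces $\mu=0$ and $\psi_0=0$, and then $\tau_0=\zeta_0=0$, so the kernel is spanned by Fourier modes. On the mode $\cos(2\pi n x/\ell)$, set $\sigma=(2\pi n/\ell)^2$. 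Then
\[\tau=-\tfrac{2(k-1)}{d\sigma+k-1}\psi,\]
$\zeta$ is obtained explicitly from $\psi$ in the same way, and the kernel condition reduces to a scalar equation $F(k,\sigma)=0$ with $F$ rational in $(k,\sigma)$.

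I expect the following from this relation. For $k\ge 2$ all terms of $F$ have one sign, so $F\neq 0$; this is consistent with Theorem \ref{Th:Uniqueness}. For $k$ close to $1$, the coefficient $(k-2)/(k-1)$ and the $\zeta$-term let $F(k,\cdot)$ vanish at some $\sigma>0$. I define $k^*\in(1,2)$ as the supremum of those $k$ for which $\sigma\mapsto F(k,\sigma)$ has a positive root. For $k_1<k^*$, I pick a root $\sigma_1$ at which $F(\cdot,\sigma_1)$ changes sign in $k$ near $k_1$; I expect it to be a simple zero in $k$ generically, and otherwise I move $k_1$ slightly. I then set $\ell(k_1)=2\pi/\sqrt{\sigma_1}$.

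\textbf{Main obstacle: non-resonance and oddness of the crossing.} The hard step is to ensure that at $k_1$ the other modes $\sigma=n^2\sigma_1$, $n\ge2$, are not roots, so that the kernel is one-dimensional. If resonance occurs, I would instead choose $\sigma_1$ as the largest root of $F(k_1,\cdot)$, using that $F\to+\infty$ as $\sigma\to\infty$ because of the $2\nu^2\sigma$ term; then $n^2\sigma_1$ lies above all roots. Next I need the crossing to be odd. As $k$ passes $k_1$, exactly one real eigenvalue of $I-D_UT(k,0)$ must cross $0$, which is the sign change of $F(\cdot,\sigma_1)$. Hence the Leray--Schauder index of the trivial solution changes at $k_1$.

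\textbf{Conclusion.} Krasnoselskii's theorem then shows that $(k_1,0)$ is a bifurcation point. So for every $\epsilon>0$ there is $k\in(k_1-\epsilon,k_1+\epsilon)$ with a nontrivial solution $U$ close to $0$. It remains to check that this solution solves \eqref{EMFC}. Positivity of $\bar\p$ and $\bar\theta$ follows from closeness to $(1,k-1)$. Non-constancy follows because the only constant solution of \eqref{EMFC} with $k$ constant is \eqref{constant_solution}, by the mean constraint and uniqueness for the logistic equation.
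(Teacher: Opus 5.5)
Your linearisation and dispersion relation agree with the paper's. Eliminating $\tau,\zeta$ on a Fourier mode gives $F(k,\sigma)=P_k(\sigma)/(d\sigma+k-1)^2$, where $P_k(X)=\det(A_k+XD_k)$ is the cubic of \eqref{Eq:Pk}.

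There is, however, a gap at the quantifier ``for any $k_1\in(1;k^*)$''. Defining $k^*$ as a supremum does not show that every $k_1<k^*$ admits a positive root. For the crossing, you only expect $k\mapsto F(k,\sigma_1)$ to have a simple zero ``generically''. Your fallback of moving $k_1$ slightly proves the result only for a dense set of $k_1$, not for each one. And without a sign change in $k$ the index does not change, so Krasnoselskii's theorem gives nothing.

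The missing ingredient is the monotonicity in $k$ used in the proof of Lemma \ref{Lem:properties_of_DeltaP(k)}: for $X\ge 0$ and $k>1$,
\[\partial_kP_k(X)=4\nu^2dX^2+\bigl(4\nu^2(k-1)+4d\bigr)X+8(k-1)>0.\]
This settles both points.
\begin{itemize}
\item If $P_{k_1}(\sigma)=0$ with $\sigma>0$, then $P_k(\sigma)<0<P_k(0)$ for $1<k<k_1$. Hence every $k_1<k^*$ is admissible.
\item At a root, $\partial_kF(k_1,\sigma_1)=\partial_kP_{k_1}(\sigma_1)/(d\sigma_1+k_1-1)^2>0$, since $P_{k_1}(\sigma_1)=0$. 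So the mode-one determinant changes sign strictly, with no genericity assumption.
\end{itemize}
Take $\sigma_1=X_3(k_1)$, the largest root, from the start, as the paper does. Then the modes $n\ge 2$ and the mean mode stay nonsingular for $k$ near $k_1$. The Leray--Schauder index is the product over modes of the signs of the block determinants of $I-D_UT(k,0)$, which equal $P_k(n^2\sigma_1)$ up to positive factors. This index therefore flips. You do not need ``exactly one eigenvalue crosses''; the sign of the determinant suffices.

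Once this is repaired, your route is a legitimate alternative to the paper's Crandall--Rabinowitz argument.
\begin{itemize}
\item It avoids the paper's transversality computation, namely the pairing with $M$ and the division $P_k=QN+R$. That computation is itself a consequence of the same monotonicity. On a corank-one block $B(k)$ one has $\frac{d}{dk}\det B=c\,w^\top B'v$ with $c\neq 0$, where $v,w$ span the right and left kernels.
\item What Crandall--Rabinowitz buys in exchange is a $\mathscr C^1$ curve of non-trivial solutions, which the paper needs to prove $k'(0)=0$. Your degree argument gives only a bifurcation point, though via Rabinowitz it also yields a global continuum.
\item Your restriction to even $\ell$-periodic functions is essential, not cosmetic. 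In the full periodic class both $\cos(\omega x)$ and $\sin(\omega x)$ lie in the kernel, so the kernel is two-dimensional and the crossing is even. The paper's one-dimensional kernel implicitly relies on such a symmetry (Neumann cosines).
\end{itemize}
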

    The proof provides semi-explicit expressions for $k^*$ and $\ell(k)$, and our analysis is completed by numerical simulations.
    
    Observe that a corollary of Theorems \ref{Th:Uniqueness}--\ref{Th:NonUniqueness} is the following:
    \begin{corollary}
    For any $d,\nu>0$, there exist $\ell(k_1)>0$ and $k_0,\, k_1>1$ such that, for $k=k_1$, \eqref{EMFC} has at least two solutions, while for any $k\geq k_0$ \eqref{EMFC} has a unique solution.
    \end{corollary}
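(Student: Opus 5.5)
The corollary follows by combining Theorems \ref{Th:Uniqueness} and \ref{Th:NonUniqueness}. The only care needed is in the order of quantifiers: the uniqueness threshold $k_0$ depends on $\ell$, and $\ell$ must be fixed by the non-uniqueness construction.

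First, fix $d,\nu>0$ and let $k^*\in(1;2)$ be given by Theorem \ref{Th:NonUniqueness}. Choose any $\tilde k_1\in(1;k^*)$ and let $\ell:=\ell(\tilde k_1)>0$ be the associated period. Theorem \ref{Th:NonUniqueness} gives, for any $\epsilon>0$, some $k\in(\tilde k_1-\epsilon;\tilde k_1+\epsilon)$ for which \eqref{EMFC} admits a non-constant $\ell$-periodic solution. Take $\epsilon$ small enough that this $k$ still lies in $(1;2)$; in particular $k>1$. Call this value $k_1$.

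Next, we must check that this periodic solution really gives a solution of \eqref{EMFC} on $\O=(0;\ell)$ with Neumann boundary conditions. If the construction produces even $\ell$-periodic solutions, i.e. a bifurcation branch built on $\cos(2\pi x/\ell)$-type modes, then their derivatives vanish at $0$ and $\ell$, and the restriction to $(0;\ell)$ is admissible. If instead the construction is phrased with a half-period, we replace $\ell$ by the Neumann interval length matching the mode used. This is the main point to verify against the proof of Theorem \ref{Th:NonUniqueness}. It should be immediate, since bifurcation from the constant state in a Neumann setting naturally uses cosine modes. Since $k_1>1$, the constant solution \eqref{constant_solution} is also a solution of \eqref{EMFC}. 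It is distinct from the non-constant one, so \eqref{EMFC} has at least two solutions at $k=k_1$.

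Finally, with $d,\nu$ and now $\ell=\ell(\tilde k_1)$ fixed, we apply Theorem \ref{Th:Uniqueness}. It provides $k_0>1$ such that for every $k\ge k_0$ the unique solution of \eqref{EMFC} on $(0;\ell)$ is \eqref{constant_solution}. Since the theorem holds for any $\ell>0$, there is no compatibility issue. One could additionally note that $k_0\ge k_1$, since uniqueness fails at $k_1$, although this is not needed for the statement. This gives the triple $\ell(k_1),k_0,k_1$ claimed in the corollary.
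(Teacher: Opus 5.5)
Your proposal is correct and follows the same route the paper intends: the corollary is Theorems \ref{Th:NonUniqueness} and \ref{Th:Uniqueness} combined, and your quantifier order (fix $\ell=\ell(\tilde k_1)$ first, take the nearby non-uniqueness value $k_1>1$ so that \eqref{constant_solution} gives a second solution, then apply Theorem \ref{Th:Uniqueness} for that $\ell$) is what makes the combination legitimate. Your boundary-condition paragraph is not needed here, since Theorem \ref{Th:NonUniqueness} is stated as giving a solution of \eqref{EMFC}, Neumann conditions included; note, though, that the paper writes its kernel as $(a,b,c)\sin(\omega x)$ with $\omega=2\pi/\ell$, not with cosine modes, so checking Neumann compatibility against the proof as written would not be immediate and would require redoing the mode analysis in the basis $\cos(n\pi x/\ell)$.
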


\subsubsection{Some comments and heuristics on Theorem \ref{Th:NonUniqueness}}\label{Se:CommentsNU}
Theorem \ref{Th:NonUniqueness} calls for several comments: first of all, this result will be proved using bifurcation theory, as should be expected. This is not the first time bifurcation is used to derive non-uniqueness results in MFG systems, and we refer for instance to \cite{Cirant2017}. However, the main contribution of this result is that the bifurcation parameter is the carrying capacity $k$ of the environment. Another point that needs to be made regarding the structure of the bifurcation: although we shall see (see Proposition \ref{Prop:characterizationofbifurcationbranch}) that the bifurcation is not transcritical, in a sense made precise in the proof,  we can not fully characterise it, although the numerical simulations presented in Section \ref{Se:Numerics} suggest a pitchfork bifurcation.

Let us now comment the regime in which these non-unique solutions emerge: (non-)uniqueness is intimately tied to the monotonicity condition of Lasry \& Lions \cite{LL1,LL2}, which can roughly be interpreted as ``aversion to crowds": it is not in the central planner's best interest to send agents where there is already an overabundance of agents. Under this monotonicity condition, which we detail in a subsequent paragraph of the paper, the uniqueness of solutions of \eqref{EMFC} is straightforward to derive. Here, we are typically in a regime where this monotonicity breaks down, and the fact that $k^*\in (1;2)$ indicates that we are working in regimes where resources are scarce.

This allows us to present the following heuristics for Theorem \ref{Th:NonUniqueness}: first, as was mentioned, the quantity $1-\bar \eta$ can be interpreted as a balance rate for the ecosystem, meaning that the zone $\{1-\bar\eta>0\}$ corresponds to a zone where a balance is struck between the resources available and the fishing pressure, while the zone $\{1-\bar\eta<0\}$ corresponds to zones where the fishing pressure outweighs the resources available. Now, let us give a formal intuition of the non-uniqueness result. As we saw, \eqref{constant_solution} provides a solution of \eqref{EMFC}, but we claim that this solution is likely not the optimal solution of the asymptotic version of problem \eqref{Eq:CommonPv} in certain parameter ranges (thereby leading to the non-uniqueness of solutions of \eqref{EMFC}). To explain why, we replace \eqref{Eq:CommonPv} with 
\[ \sup_{\alpha \in L^2((0;T)\times \O)}\iint_{(0;T)\times \O}\left(\theta_{m_\alpha}m_\alpha-\e\frac{\vert \alpha\vert^2}2m_\alpha\right) \] and we let $d,\, \nu,\, \e \to 0$, $T\to +\infty$. It is standard that, for a fixed $(k,\bar m)$, the solution of $-d\Delta \bar\theta-\bar\theta(k-\bar m-\bar\theta)=0$, endowed with Neumann boundary conditions, can be approximated by $(k-\bar m)_+$ as $d\to 0^+$. Recall that $\bar\alpha = \nabla \bar u$, $\bar m=\bar\p^2$ and that $\bar u=\ln(\bar \p)$. Overall, the optimal control problem under consideration should thus write (although this is probably challenging to prove)
\[ \max_{\bar\p,\, \fint_\O \bar\p^2=1}\fint_\O \bar\p^2(k-\bar\p^2)_+-\e\fint_\O |\n \bar\p|^2.\] Taking the limit $\e\to 0^+$, we are left with studying 
  \[ \bar J (\bar\p^2) := \fint_\Omega \left(k-\bar\p^2\right)_+ \ \bar\p^2\]
    and we aim at proving that $\bar\p^2\equiv 1$ (corresponding to a uniform distribution of fishermen) is not the solution of     \[ \max_{\tiny{\fint_\Omega \bar\p^2 =1}} \bar J (\bar\p^2).\]
Note that for $\bar\p^2\equiv 1$ we have
    \[ \bar J(1) =(k-1) \le \frac{k^2}{4}.\]
    We now assume $k<2$ and we split the lake $\Omega$ into a balanced zone and an unbalanced zone $U$. Namely, fix  $M>k>1$ and consider the fishermen distribution
    \[ \bar\p^2_U =
    \begin{cases}
      \frac{k}{2}   \quad &\text{in } \ \Omega \setminus U, \\
      M  &\text{in } \ U.
    \end{cases}\]
    The zone $\Omega\setminus U$ is balanced, whereas $U$ is not.  Further observe that $(M,|U|,k,|\O|)$ are linked by the normalisation condition $\fint_\O \bar\p^2_U=1 $, which gives 
    \[ |U|= |\O|\frac{ 1-\frac{k}{2}}{M - \frac{k}{2}}.\]
    The gain associated with $\bar\p_U$ is explicit, and given by    \begin{align*}
        \bar J(\bar\p^2_U) = \frac{k^2}{4}\cdot\frac{M-1}{M-\frac{k}{2}}.
    \end{align*}
    We want to find regimes where $\bar J(\bar\p^2_U)\ge \bar J(1)$ \emph{i.e.} such that
    \[ \frac{k^2}{4}\frac{M-1}{M-\frac{k}{2}} - k +1 \ge 0.\]
By standard computations, it suffices to take    \[M \ge \frac{k}{2-k}\] and so, if $M$ is large enough, a heterogeneous distribution of fishermen gives a larger common income than a homogeneous one. We emphasise once more that this reasoning is formal, and making it rigorous seems challenging. From an applied perspective, this result highlights that in a coordinated unfavourable framework ``sacrificing'' a portion of the domain to over-exploitation (namely, creating spatial heterogeneity) can lead to a higher collective income, a phenomenon that would not arise under purely competitive (selfish) behaviour: indeed, in the competitive scenario, a single player who over-exploits resources in one area would likely move to a more favorable region, ultimately causing the extinction of the whole fish population (purely selfish behaviour), and consequently derive a zero individual gain as $T\to +\infty$. \\

Finally, let us conclude with another remark regarding the specificity of the coordinated setting.
The emergence of spatially heterogeneous harvesting patterns bears some resemblance to the exclusion zone mechanisms studied in predator-prey reaction-diffusion models in \cite{berestycki2026}, where limiting access to portions of the domain can significantly influence species persistence and coexistence. However, the underlying mechanisms are fundamentally different: in these models, predators follow prescribed biological dynamics and are not modeled as rational agents, while in our framework the spatial distribution of harvesting effort arises endogenously from the optimization problem solved by the agents.

This distinction is also evident with respect to harvesting models based on fixed strategies, such as the multispecies fishery model of \cite{Villain2025}. In contrast, the Mean Field Game formulation allows agents to adapt their behavior to both the spatial distribution of the resource and the strategic behavior of agents through an optimization process. This allows for the joint analysis of ecological dynamics and strategic decisions of agents in harvesting problems.\\

We further note that, whenever $k$ is constant and $\O=(0;\ell)$, we will show in an upcoming paper \cite{LMFN27} that there exists a unique solution of the ergodic Mean Field Game.

\subsubsection{Numerical illustrations of the bifurcation}\label{Se:Numerics}
We present several numerical simulations illustrating the oscillatory solutions constructed through the bifurcation analysis. Throughout this section, we consider the case $d=2\nu^2$.\\

\textbf{One-dimensional simulations.} We first focus on the one-dimensional setting studied in Sections \ref{Sec:ProofTh.Bifurcation}--\ref{Sec:ProofTh.Uniqueness}. Using symbolic computations implemented with the \emph{sympy} library, we compute the critical threshold value $k^*$ associated with the bifurcation. This identifies the interval $k\in (1;k^*)$ where nontrivial oscillatory solutions are expected to emerge from the constant state. To numerically solve \eqref{EMFC}, we adopt a variational (energy-based) approach. More precisely, we use the \emph{eig} function from the \emph{scipy} library to solve the constrained linear eigenvalue problem associated with the $\bar\p$-equation, while the variables $\bar\theta$ and $\bar\eta$ are updated through a gradient descent scheme with adaptive stepsizes:
    \[ \bar \theta_{n+1} = \bar \theta_n -t_{E_1,n}dE_1(\bar \theta_n), \quad \bar\eta_{n+1} = \bar \eta_n -t_{E_2,n} dE_2(\bar\eta_n).\]
    
    \captionsetup[algorithm]{justification=raggedright, singlelinecheck=false}
    \begin{algorithm}
    \caption{Numerical resolution of \eqref{EMFC} for $k\in(1,k^*)$}
    \label{alg.label}
    \begin{algorithmic}[1]
    \Require $\bar\theta_0$, $\bar\eta_0$, $\bar\p_0$, tol, num\_iterations, $k$
    \State Initialisation $\bar\theta$ $\gets$ $\bar\theta_0$, $\bar\eta$ $\gets$ $\bar\eta_0$, $\bar\p$ $\gets$ $\bar\p_0$, $t_{E_1,0} \gets 0.01$, $t_{E_2,0} \gets 0.00001$
    \State $n\gets 0$
    \While{$\|dE_1(\bar\theta, \bar\p, k)\| > $ tol \textbf{and} $\|dE_2(\bar\eta, \bar\theta, \bar\p, k)\| > $ tol \textbf{and} $n<$ num\_iterations}
        \State Compute $\bar\lambda_{n+1}$, $\bar\p_{n+1}$
        \State Compute $\bar\theta_{n+1}$ with a gradient descent
        \State Compute the new stepsize $t_{E_1}$
        \State Compute $\bar\eta_{n+1}$ with gradient descent
        \State Compute the new stepsize $t_{E_2}$
        \State Update $\bar\theta$, $\bar\eta$, $\bar\p$, $t_{E_1}$, $t_{E_2}$
        \State $n \gets n+1$
    \EndWhile
    \end{algorithmic}
    \end{algorithm}
    We do not address the convergence analysis of the above algorithm. Instead, we rely on the stopping criterion
    \[\|dE_1(\bar\theta,\bar\p,k)\| \leq \mathrm{tol}, \qquad \|dE_2(\bar\eta,\bar\theta,\bar\p,k)\| \leq \mathrm{tol},\]
    which numerically indicates that the iteration has reached a critical point of the associated energy functional.\\
    \begin{figure}[h!]
        \centering
        \includegraphics[width=0.7\linewidth]{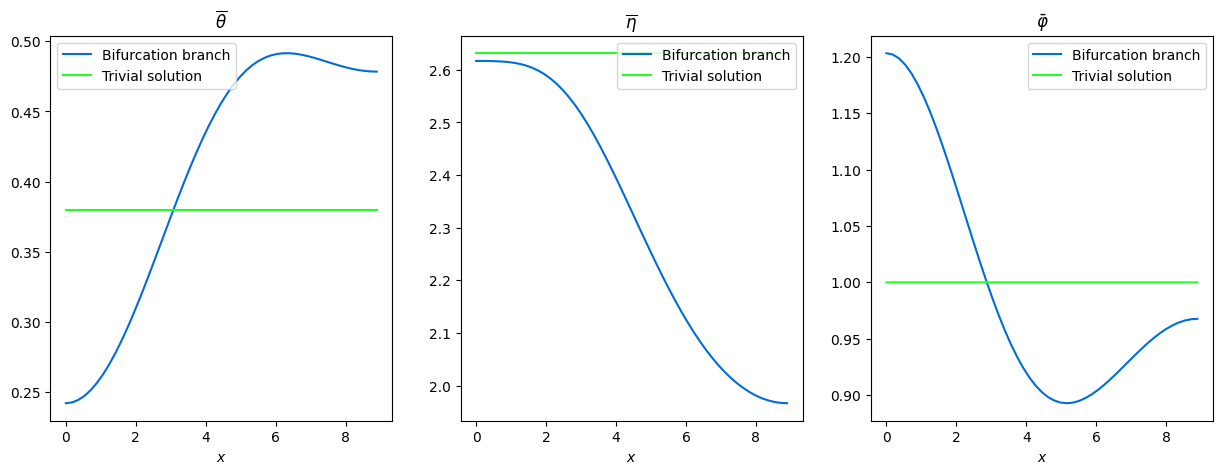}
        \caption{\centering{\textcolor{cb-blue}{Oscillating solution} and \textcolor{cb-green-lime}{Trivial solution} of \eqref{EMFC} for $l\simeq 9$, $d=2\nu^2$ and $k\equiv 1.38$.}}
        \label{fig.bifurcation_X_3}
    \end{figure}
    
    As should be expected, the bifurcation is not transcritical (see Proposition \ref{Prop:characterizationofbifurcationbranch}). Moreover, our numerical simulations (see Fig. \ref{fig:characterization_bifurcation}) suggest a pitchfork shape. However, proving the supercritical nature of this bifurcation remains a challenging open problem.
        \begin{figure}[h!]
            \centering
            \includegraphics[width=\linewidth]{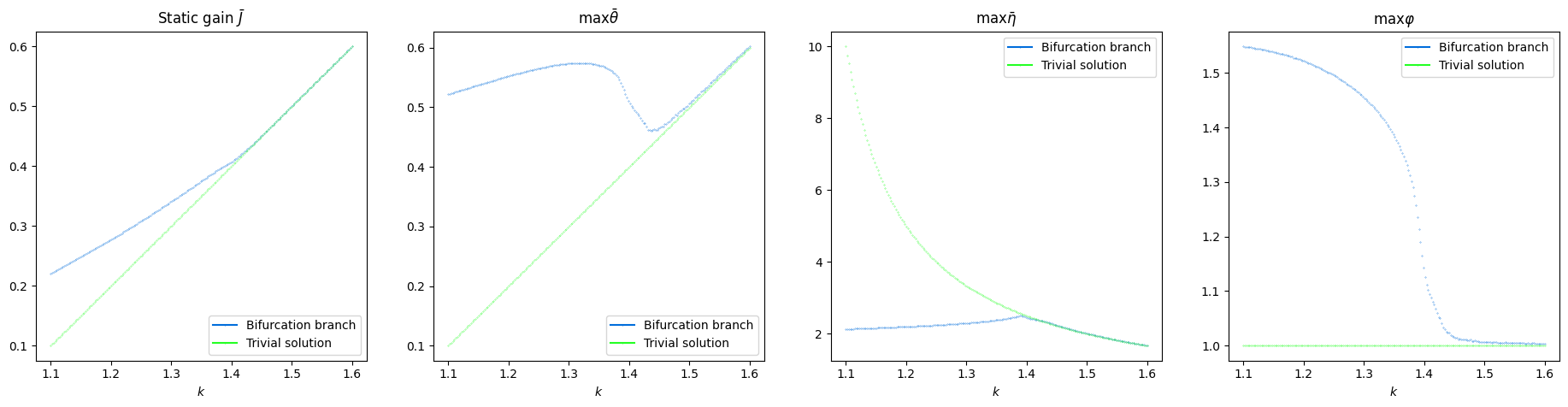}
            \caption{Numerical characterization of the bifurcation branch.}
            \label{fig:characterization_bifurcation}
        \end{figure}
    
    Interestingly, the numerical simulations (Fig. \ref{fig:characterization_bifurcation}) also suggest that the bifurcation is global in $(1;2)$, as the bifurcation branch appears to persist well beyond a neighbourhood of the critical threshold $k^*$. A formal analytical treatment of the global bifurcation is left as an open problem. In particular, while uniqueness of the trivial solution is known for sufficiently large values of $k$ (see Section \ref{Sec:ProofTh.Uniqueness}), the analytical behaviour of solutions in the intermediate regime remains open.\\
    
    \textbf{Two-dimensional simulations and Turing patterns.} Although the theoretical analysis of this work is primarily one-dimensional, we additionally present several two-dimensional simulations in order to illustrate the richer phenomenology of the system beyond the bifurcation regime covered by our analysis. Contrary to the one-dimensional setting, we do not attempt to analytically determine the critical threshold. Instead, in Fig. \ref{fig:2D_bifurcation_patterns} we vary the parameter $k$ between the values $1.2$ and $1.9$, which lie beyond the one-dimensional bifurcation threshold $k^*$ for $d=2\nu^2$, and we investigate the qualitative structure of stationary states for a fixed domain $\Omega := \left(0; 4\pi\right)^2$.\\
    \begin{figure}[h!]
        \centering
        \includegraphics[width=\linewidth]{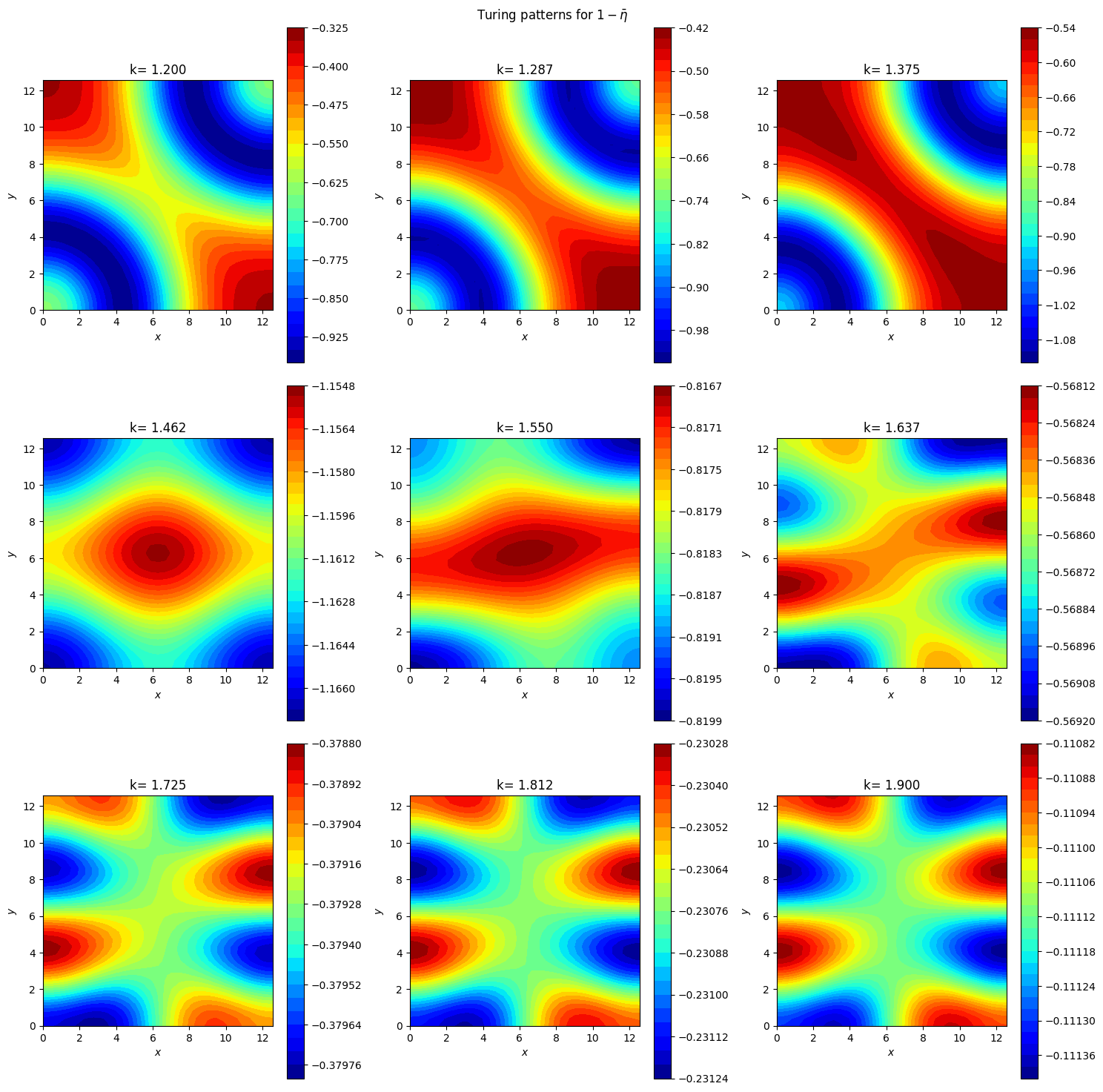}
        \caption{Turing patterns for $1-\bar\eta$ for $k \in (1.2;1.9)$ in $\left(0; 4\pi\right)^2$.}
        \label{fig:2D_bifurcation_patterns}
    \end{figure}    
        
    Our simulations reveal the emergence of a variety of Turing-type patterns, including stripes, spots and mixed configurations. Interestingly, the observed stationary states strongly depend on both the geometry of the domain and the choice of the initial perturbation. In particular, different initial datum $\bar\p_0$ lead to significantly different asymptotic configurations. We also observe that the amplitude of the patterns is really small when $k$ is approaching $2$.\\

    These numerical observations suggest that the multidimensional problem possesses a significantly richer bifurcation structure than the one-dimensional theory captures. For this reason, we believe that including these simulations provides valuable qualitative insight into the landscape of stationary solutions associated with \eqref{EMFC}, even though a rigorous analytical treatment in dimension two lies beyond the scope of the present work.

    \subsection{Structure of the paper}
    Although we presented uniqueness results before the theorem devoted to the emergence of multiple solutions as it seems more natural from a presentation perspective, we will first prove the emergence of multiple solutions in Section \ref{Sec:ProofTh.Bifurcation}.  The proof of our uniqueness result is carried out in Section \ref{Sec:ProofTh.Uniqueness}. Both sections are mathematically independent.
    
\subsection{Bibliographical references}\label{Se:Bibliography}

\subsubsection{Reaction Diffusion Equations}We review some basic facts about the dynamics of fishes in the absence of fishermen, a general reference is \cite{zbMATH07668634}. The fish population is modelled through the standard logistic diffusive equation
\begin{equation}\label{RDE_without_m}\tag{RDE}
    \begin{cases}
        \partial_t \theta - d \Delta \theta = \theta (g(x) -\theta)  &\text{in } \R_+ \times \Omega,\\
        \partial_\nu \theta =0 &\text{on } \R_+ \times \partial\Omega,\\
        \theta(0,\cdot)=\theta_0(\cdot)\not\equiv0,\ge 0 &\text{in } \Omega.
    \end{cases}
\end{equation}
 Due to its ability to capture relevant biological phenomena, this equation has been a cornerstone of mathematical biology (\cite{S,SK97} since the pioneering papers of \cite{S, F37} and \cite{K37}).  From the analytical side (\emph{i.e.} well-posedness of the model), the main references we rely on are \cite{CC, CC2} in bounded domains and  \cite{BHR} in the periodic setting, in which the focus is on the existence of non-trivial steady states, and on their dynamical stability. As it turns out, the existence, uniqueness and stability of a non-trivial steady-state $\theta$  are strictly related to the principal eigenvalue $\lambda_1[-d\Delta -g]$ of the following problem:
\[\begin{cases}
    -d\Delta \phi - g(x)\phi = \lambda_1 \phi, \ \phi>0 \quad &\text{in } \Omega,\\
    \partial_\nu\phi = 0 &\text{on }\partial \Omega.
\end{cases}\] More specifically, the existence, uniqueness and stability of $\theta$ is equivalent to 
\[ \lambda_1[-d\Delta-g]<0.\]  When this condition is met, any initial population $\theta_0$ survives in long time, and we speak of ``species persistence".
As it follows from the Rayleigh quotient formulation of the eigenvalue that \[ -\max_{\bar\Omega} g \le \lambda_1[-d\Delta -g(x)] \le -\fint_\Omega g\] we deduce that a sufficient condition for species persistence is $\fint_\O g>0$. These results led to several papers investigating the influence of the spatial heterogeneity $g$ on several criteria, including  optimal survival ability \cite{CC, CC2, CCH, SK97, BHR}.

\subsubsection{Mean Field Game \& Mean Field Control} Mean field games were introduced simultaneously and independently by Lasry \& Lions \cite{LL0, LL1, LL2} and Caines, Huang \& Malham\'e \cite{CHM} to model the behaviour of a  large number $N$ of interacting agents. These agents are supposed to be rational and indistinguishable, interacting through the empirical averages of quantities which depend on the state variable. At the limit when $N \to +\infty$, and under suitable assumptions \cite{zbMATH06987368}, the game can be approximated through a system of coupled PDEs, the so-called MFG system. A general reference for the study of MFG is \cite{zbMATH06721976,zbMATH06721977}. A classical (second order) MFG system is
\begin{equation} \label{MFG_CLLP}\begin{cases}
    -\partial_t u -\nu \Delta u - \frac{\abs{\nabla u}^2}{2}=F[m(t, \cdot)] \quad & \text{in } (0,T)\times \Omega,\\
    \partial_t m -\nu \Delta m +\nabla \cdot (m\, \nabla u) =0, \ \fint_\Omega m=1 & \text{in } (0,T)\times \Omega,\\
    u(T, \cdot)= 0, \ m(0,\cdot)=m_0(\cdot)\geq 0, \ \fint_\Omega m_0=1 & \text{in } \Omega,\\
\end{cases}\end{equation}
with $F : L^2(\Omega) \longrightarrow\R$ a running cost. Although a heterogeneous final-time condition $u(T, \cdot) = G(\cdot, m(T))$ and a general Hamiltonian $H$ could also be considered, we consider a zero final-time condition for the value function $u$ and a quadratic Hamiltonian $H(p)=\frac{1}{2}p^2$ to maintain consistency with our framework. As discussed earlier, the system \eqref{EMFC} does not strictly fit within the classical MFG setting \eqref{MFG_CLLP}, as the running cost $\theta(1-\eta)$  is non-local both in space and time. 

Let us discuss some well-known results for \eqref{MFG_CLLP}. The existence of solutions of \eqref{MFG_CLLP}, under standard regularity assumptions on $F$ as a function of $m$ and on $F[m]$ as a function of time and space, follows from a standard fixed point argument \cite{zbMATH06147631}. For the ergodic problem \eqref{Eq:EMFC}, a similar approach applies to $\bar m \longmapsto \left(\bar \lambda, F[\bar m]\right)$, where $F[\bar m]=\bar\theta[\bar m] (1-\bar\eta[\bar m])$, with minor modifications of  \cite[Appendix]{zbMATH06147631}. Establishing (non-)uniqueness of solution of \eqref{MFG_CLLP} is the main challenge. There are three regimes for which uniqueness of solution of \eqref{MFG_CLLP}  is well-known: the so called Lasry-Lions monotonicity condition, introduced in \cite{LL2}, which reads 
\begin{equation} \label{monotonicitynonlocalinspace}
    \int_\Omega (F[m_1] - F[m_2]) (m_1-m_2) \, dx \le 0 \quad \forall m_1\neq m_2.
\end{equation}
This condition is usually interpreted as aversion to crowds. The second setting for uniqueness is the small-time horizon case. This approach is not relevant here since we investigate the ergodic problem, which corresponds to the $T\to +\infty$ asymptotics of the evolution system. This topic was first introduced in a lecture of \cite{LCollegedeFrance} on January 9th 2009, and later revisited in \cite{zbMATH07194583}, where it is shown that the regularity of the Hamiltonian $H$ plays a crucial role in proving uniqueness for small-time horizons. In the absence of such regularity, counterexamples can be constructed, as illustrated in {\cite{zbMATH07194583}}. The last case for uniqueness is the small-data one (see \cite{zbMATH07194583, BC}), which is not relevant for ergodic problems either.

Regarding the long-time behaviour for MFG systems, the first contributions were the works of Cardaliaguet, Lasry, Lions and Porretta \cite{zbMATH06147631,zbMATH06250859} (for the first-order case ($\nu=0$) we refer to \cite{CardaliaguetKAM}). Under the assumption \eqref{monotonicitynonlocalinspace}, one can show the convergence of \eqref{MFG_CLLP} to the ergodic MFG system 
\begin{equation}\label{EMFC_CLLP}
    \begin{cases}
        \bar  \lambda -\nu\Delta \bar u - \frac{\abs{\nabla \bar u}^2}{2}=F\left[\bar m(\cdot)\right], \ \fint_\Omega \bar u =0 \quad &\text{in } \ \Omega,\\
        -\nu \Delta \bar m +\nabla \cdot (\bar m \cdot \nabla \bar u)=0, \ \fint_\Omega \bar m =1, \bar m>0 &\text{in } \ \Omega.
    \end{cases}
\end{equation}
This \eqref{EMFC_CLLP} system fits with our \eqref{Eq:EMFC} system if $F[\bar m]=\bar\theta[\bar m](1-\bar\eta[\bar m])$. 

While the bulk of MFG literature focuses on mean field interactions involving only the distributions of states,  Mean Field Games of Controls (or Mean Field Control) address a different scenario where the gain of an individual agent depends on the joint distribution of both states and optimal strategies. Early foundational works by \cite{GV13, GPV13} established the existence of solutions of MFC systems, particularly for stationary games under small-parameter assumptions. Further results regarding existence and uniqueness in the MFC framework can be found in \cite{BLL19, zbMATH06898614, CD18, CardaliaguetKAM, K21, K22}. Uniqueness remains a pivotal challenge in MFG theory and the Lasry-Lions monotonicity condition for MFC systems is discussed in \cite{GV13, CD18, K22}.

\subsubsection{(Non-)Uniqueness in MFG}
The question of non-uniqueness in MFG has been investigated for the time evolution problem \eqref{monotonicitynonlocalinspace} with a strong emphasis on providing instances of non-uniqueness. A first possibility involves modifying the Hamiltonian $H$, for instance considering an $H$ lacking symmetry or regularity \cite{zbMATH07194583}; remarkably, this can lead to the existence of several optimal strategies yielding the same pay-off. Modelling wise, this approach can not be suited to our needs. Another established direction focuses on the structural properties of the cost functionals. As explored in the works of Bardi \& Cirant and Bardi \& Fisher \cite{BC,zbMATH07194583}, non-uniqueness can arise as a consequence of the failure of the Lasry-Lions monotonicity condition. A representative class of such functionals is given by:
\begin{equation} F(x, m) = a_1 x \int_\Omega f(x,y) \, m(t,y) \, dy, \end{equation}
where $x$ denotes the position of a player, $m$ represents the distribution of the other players and $f(x,y)$ describes the influence of the position of other players on the cost. Within this framework, the sign of the coefficient $a_1$ dictates the qualitative behaviour of the agents and can lead to the emergence of multiple solutions. 

While the above example construct \emph{ad hoc} functionals that provide clear mathematical insights into how uniqueness can fail, it is often disconnected from the specific  mechanism of resource harvesting. In contrast, the non-uniqueness that we obtain arises directly from the problem structure.

\subsubsection{Harvesting Mean Field Game.} The mathematical analysis of the harvesting problem within the context of Mean Field Games has seen recent interest and developments in \cite{zbMATH07930639,zbMATH08109759}. The most closely related work is  \cite{zbMATH08109759}, where the authors address a MFG closely related to \eqref{EMFC} in the competitive setting and establish the long-time behaviour in a perturbative setting, first establishing global in time uniqueness for the perturbed model. The question of non-uniqueness for general ranges of parameters is left completely open.    
    
\subsection{Open problems}
We present some open problems that seem relevant and challenging:
\begin{enumerate}
\item The first one is the identification, in the presence of multiple solutions of \eqref{EMFC}, of the optimal one. This is far from trivial, and, in fact, for the different solutions we obtain in this paper, our computations were not conclusive. 
\item The previous question is intimately tied to the question of stability of the bifurcating solutions we obtain. More generally, there are still several challenging open problems about the types of bifurcations we study; in particular, determining whether these bifurcations are global or not is both important from the applied perspective, and challenging from a mathematical one.
\end{enumerate}

\section{Non-uniqueness results: proof of Theorem \ref{Th:NonUniqueness}}\label{Sec:ProofTh.Bifurcation}
\subsection{Strategy of proof and set-up}
As explained in the introduction, Theorem \ref{Th:NonUniqueness} relies on bifurcation theory and, more specifically, on an application of the Crandall-Rabinowitz theorem which we recall below. To set the terminology, let us recall the following definition:
\begin{definition}\label{De:Bifurcation}
Let $X,\, Y$ be two Banach spaces and $\mathcal F:\R\times X\to Y$ be continuous. Assume that the equation  $\mathcal{F}(\mu, x)=0$ has a curve of solutions $\left\{ (\mu, 0) \, |\, \mu \in \R\right\}$, that will henceforth be referred to as the trivial solutions. A point $(\mu_0,0)$ such that $\mathcal{F}(\mu_0,0)=0$ is called a \emph{bifurcation point} if, in any neighbourhood of $(\mu_0,0)$, there exist  $(\mu, x)$  such that $\mathcal F(\mu,x)=0$ and $x\neq 0$. 
\end{definition}
One of the standard tools in bifurcation theory is the Crandall-Rabinowitz theorem \cite{zbMATH03348766}:
    \begin{theorem}[Crandall-Rabinowitz]\label{Th:CR}
        Let $X, \, Y$ be Banach spaces and $\mathcal{F} : \R \times X \to Y$ be  $\mathscr C^2$. Let $\mu_0\in \R$. Assume that the following are satisfied:       \begin{enumerate}
            \item $\mathcal{F}(\mu,0)=0$ for any $\mu$ in a neighbourhood of $\mu_0$. \label{(i)Th:localbifurcation}
            \item $\ker \left( d_x \mathcal{F}(\mu_0,0)\right)=\R \xi_0$ for some $\xi_0 \in X\setminus \left\{ 0 \right\}$,\label{(ii)Th:localbifurcation}
            \item $\Im   \left(d_x \mathcal{F}(\mu_0,0)\right)$ has co-dimension $1$,\label{(iii)Th:localbifurcation}
            \item \emph{Transversality condition:} $d_x d_\mu \mathcal{F}(\mu_0,0)[\xi_0] \not\in \Im \left(  d_x \mathcal{F}(\mu_0,0)\right)$. \label{(iv)Th:localbifurcation}
        \end{enumerate}
        then $(\mu_0, 0) \in \R \times X$ is a bifurcation point. We note that conditions \eqref{(ii)Th:localbifurcation}--\eqref{(iii)Th:localbifurcation} imply that $d_x\mathcal F(\mu_0,0)$ is an index 0 Fredholm operator.
    \end{theorem}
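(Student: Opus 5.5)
The plan is the classical Lyapunov--Schmidt-type argument of Crandall and Rabinowitz: blow up the trivial branch by dividing by the amplitude, then apply the implicit function theorem in the remaining variables.

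\textbf{Set-up.} Set $L:=d_x\mathcal F(\mu_0,0)$.
\begin{enumerate}
\item Since $\ker L=\R\xi_0$ is finite dimensional, the Hahn--Banach theorem gives a closed complement $Z$ with $X=\R\xi_0\oplus Z$.
\item $\Im L$ is closed, because it has finite codimension and is the range of a bounded operator.
\item Every $x$ near $0$ on a cone around $\R\xi_0$ can be written $x=s(\xi_0+z)$ with $s\in\R$ and $z\in Z$.
\end{enumerate}
Using assumption \eqref{(i)Th:localbifurcation}, namely $\mathcal F(\mu,0)=0$, I define
\[
\mathcal G(\mu,s,z):=\int_0^1 d_x\mathcal F\bigl(\mu,ts(\xi_0+z)\bigr)[\xi_0+z]\,dt .
\]
This equals $s^{-1}\mathcal F(\mu,s(\xi_0+z))$ for $s\neq 0$, and equals $d_x\mathcal F(\mu,0)[\xi_0+z]$ for $s=0$. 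Because $\mathcal F$ is $\mathscr C^2$, the map $\mathcal G:\R\times\R\times Z\to Y$ is $\mathscr C^1$, by differentiating under the integral. Moreover $\mathcal G(\mu_0,0,0)=L\xi_0=0$. Zeros of $\mathcal G$ with $s\neq0$ give nontrivial zeros $x=s(\xi_0+z)\neq 0$ of $\mathcal F$.

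\textbf{Invertibility of the linearisation.} The derivative of $\mathcal G$ with respect to $(\mu,z)$ at $(\mu_0,0,0)$ is
\[
T:(\tau,w)\in\R\times Z\longmapsto \tau\, d_xd_\mu\mathcal F(\mu_0,0)[\xi_0]+Lw\in Y .
\]
\begin{enumerate}
\item \emph{Injectivity.} If $T(\tau,w)=0$, then $\tau\, d_xd_\mu\mathcal F(\mu_0,0)[\xi_0]\in\Im L$. By the transversality condition \eqref{(iv)Th:localbifurcation} this forces $\tau=0$. Then $Lw=0$ with $w\in Z\cap\ker L=\{0\}$.
\item \emph{Surjectivity.} By \eqref{(iii)Th:localbifurcation}, $Y=\Im L\oplus\R\, d_xd_\mu\mathcal F(\mu_0,0)[\xi_0]$, again using \eqref{(iv)Th:localbifurcation}. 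Moreover $L|_Z$ is onto $\Im L$.
\end{enumerate}
By the open mapping theorem, $T$ is therefore an isomorphism of Banach spaces.

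\textbf{Conclusion.} The implicit function theorem gives $\delta>0$ and $\mathscr C^1$ maps $s\mapsto(\mu(s),z(s))$ with $\mu(0)=\mu_0$, $z(0)=0$, and $\mathcal G(\mu(s),s,z(s))=0$ for $|s|<\delta$. Then $x(s):=s(\xi_0+z(s))$ satisfies $\mathcal F(\mu(s),x(s))=0$, and $x(s)\neq0$ for $s\neq0$ because $\xi_0\notin Z$. These solutions accumulate at $(\mu_0,0)$ as $s\to0$, so $(\mu_0,0)$ is a bifurcation point in the sense of Definition~\ref{De:Bifurcation}.

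The Fredholm remark follows from $\dim\ker L=1=\operatorname{codim}\Im L$ together with the closedness of $\Im L$.

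\textbf{Main obstacle.} The only delicate point is the regularity of the blown-up map $\mathcal G$ at $s=0$. Dividing by $s$ costs one derivative, which is exactly why $\mathscr C^2$ is assumed. I would handle this through the integral representation above rather than through the quotient directly.
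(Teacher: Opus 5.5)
Your argument is correct and is precisely the classical Crandall--Rabinowitz proof: you blow up along $x=s(\xi_0+z)$ with $Z$ a closed complement of $\R\xi_0$, then invert $(\tau,w)\mapsto \tau\, d_xd_\mu\mathcal F(\mu_0,0)[\xi_0]+Lw$ via transversality and apply the implicit function theorem in $(\mu,z)$; the paper does not reprove the theorem but simply quotes it from \cite{zbMATH03348766}, whose argument is this one. Under the $\mathscr C^2$ hypothesis stated here, your integral representation of $s^{-1}\mathcal F(\mu,s(\xi_0+z))$ is the clean way to obtain $\mathscr C^1$ regularity at $s=0$; the original statement assumes less regularity, only continuity of $\mathcal F_\mu$, $\mathcal F_x$ and $\mathcal F_{\mu x}$, and compensates with an implicit function theorem allowing merely continuous dependence on $s$.
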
  
The goal is to use this theorem, which requires defining the right function $\mathcal F$. We first introduce the map
   
    \[\mathcal G: \R\times (W^{2,2}(\O))^2\ni (k,\hat \theta,\hat \eta)\mapsto \begin{pmatrix}
    -d\Delta \hat\theta-\hat\theta\left(k-\hat\p_{k,\hat\theta,\hat\eta}^2-\hat\theta\right)\\ -d\Delta\hat\eta-\hat\eta\left(k-\hat\p_{k,\hat\theta,\hat\eta}^2-2\hat\theta\right)-\hat\p_{k,\hat\theta,\hat\eta}^2
    \end{pmatrix}\] where, for any $(\hat\theta,\hat\eta)$, $\hat\p_{k,\hat\theta,\hat\eta}$ is the normalised eigenfunction associated with the principal eigenvalue $\hat \lambda$ of $-2\nu^2\Delta-\hat\theta(1-\hat\eta)$, namely, the unique solution of
    \[
    \begin{cases}
    -2\nu^2\Delta \hat\p_{k,\hat\theta,\hat\eta}-\hat\p_{k,\hat\theta,\hat\eta}\hat\theta(1-\hat\eta)=-\hat\lambda\hat\p_{k,\hat\theta,\hat\eta}&\text{ in }\O,\, 
    \\ \hat\p_{k,\hat\theta,\hat\eta}> 0&\text{ in }\O,\, 
    \\ \partial_\nu \hat\p_{k,\hat\theta,\hat\eta}=0&\text{ on }\partial \O,\, 
        \\ \fint_\O \hat\p_{k,\hat\theta,\hat\eta}^2=1.
    \end{cases}
    \]
We then define    
\begin{multline*}\mathcal F: \R\times (W^{2,2}(\O))^2\ni (k,\tilde \theta,\tilde \eta)\mapsto \mathcal G \left(k,k-1+\tilde\theta,\frac{1}{k-1}+\tilde\eta\right)\\=\begin{pmatrix}
 -d\Delta \tilde\theta-(k-1+\tilde\theta)\left(1-\tilde\p_{k,\tilde\theta,\tilde\eta}^2-\tilde\theta\right)\\ -d\Delta\tilde\eta+\left(\frac1{k-1}+\tilde\eta\right)\left(k-2+\tilde\p_{k,\hat\theta,\hat\eta}^2+2\tilde\theta\right)-\tilde\p_{k,\tilde\theta,\tilde\eta}^2
\end{pmatrix} \end{multline*}  
where $\tilde\p_{k,\tilde\theta,\tilde\eta}$ is the principal eigenfunction, defined as the unique solution of
  \[
    \begin{cases}
    -2\nu^2\Delta \tilde\p_{k,\tilde\theta,\tilde\eta}-\tilde\p_{k,\tilde\theta,\tilde\eta}(k-1+\tilde\theta)(\frac{k-2}{k-1}-\tilde\eta)=-\tilde\lambda_{k,\tilde\theta,\tilde\eta}\tilde\p_{k,\tilde\theta,\tilde\eta}&\text{ in }\O,\, 
    \\ \tilde\p_{k,\tilde\theta,\tilde\eta}\geq 0&\text{ in }\O,\, 
    \\ \partial_\nu \tilde\p_{k,\tilde\theta,\tilde\eta}=0&\text{ on }\partial \O,\, 
        \\ \fint_\O \tilde\p_{k,\tilde\theta,\tilde\eta}^2=1.
    \end{cases}
    \]
Observe that 
\begin{equation}\label{Eq:00} \tilde\p_{k,0,0}\equiv 1,\, \tilde\lambda_{k,0,0}=k-2.\end{equation}
Furthermore, the simplicity of $\tilde \lambda_{k,\tilde\theta,\tilde\eta}$ entails its $\mathscr C^\infty$ regularity as a function of $(\tilde\theta,\tilde\eta)$, thereby establishing the following lemma:
\begin{lemma}
$\mathcal F$ is $\mathscr C^2$ in a neighbourhood of $(k,0,0)$.
\end{lemma}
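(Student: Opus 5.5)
The plan is to write $\mathcal F$ as a composition of maps that are each smooth near $(k,0,0)$. The only non-explicit ingredient is the map $(k,\tilde\theta,\tilde\eta)\mapsto(\tilde\lambda_{k,\tilde\theta,\tilde\eta},\tilde\p_{k,\tilde\theta,\tilde\eta})$. Once it is $\mathscr C^2$ (in fact $\mathscr C^\infty$) with values in $\R\times W^{2,2}(\O)$, the rest is routine.

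First, I fix the functional setting. Set $V=V(k,\tilde\theta,\tilde\eta):=(k-1+\tilde\theta)(\frac{k-2}{k-1}-\tilde\eta)$. In any dimension for which $W^{2,2}(\O)$ is a Banach algebra embedded in $L^\infty$ (in particular $d=1$, the case of interest, and $d\le 3$), the map $(k,\tilde\theta,\tilde\eta)\mapsto V$ is polynomial in $(\tilde\theta,\tilde\eta)$ and smooth in $k$ for $k>1$, hence $\mathscr C^\infty$ into $W^{2,2}(\O)\subset L^\infty(\O)$. Likewise, products and squares are smooth maps $W^{2,2}\times W^{2,2}\to W^{2,2}\hookrightarrow L^2$. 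Therefore the two components of $\mathcal F$ are smooth functions of $(k,\tilde\theta,\tilde\eta,\tilde\p)$ with values in $L^2(\O)^2$, and the target space is $Y=L^2(\O)^2$.

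Second, I handle the eigenpair by the implicit function theorem. Define
\[\Phi(k,\tilde\theta,\tilde\eta,\p,\lambda):=\Big(-2\nu^2\Delta\p-V\p+\lambda\p,\ \fint_\O\p^2-1\Big),\]
acting from $\R\times (W^{2,2})^2\times W^{2,2}_\nu(\O)\times\R$ into $L^2(\O)\times\R$. Here $W^{2,2}_\nu$ denotes the Neumann subspace. The map $\Phi$ is $\mathscr C^\infty$ and vanishes at $(k,0,0,1,k-2)$ by \eqref{Eq:00}. Its partial differential in $(\p,\lambda)$ at that point is
\[(\psi,\mu)\mapsto\Big(-2\nu^2\Delta\psi+\mu,\ 2\fint_\O\psi\Big),\]
since $V(k,0,0)=k-2=\lambda$.

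This differential is an isomorphism. For injectivity, integrating the first component gives $\mu=0$; then $\psi$ is a Neumann harmonic function, hence constant, and the condition $\fint_\O\psi=0$ forces $\psi=0$. For surjectivity, given $(f,c)$, choose $\mu=\fint_\O f$, solve the Neumann problem for $\psi$ with zero-mean right-hand side $f-\mu$ (Fredholm alternative plus elliptic regularity), and add the constant $c/2$. The implicit function theorem then yields a $\mathscr C^\infty$ branch $(\p,\lambda)(k,\tilde\theta,\tilde\eta)$ near $(k,0,0)$.

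The main obstacle is checking that this branch really is the \emph{principal} normalized eigenpair, i.e.\ that it coincides with $\tilde\p_{k,\tilde\theta,\tilde\eta}$. By continuity in $W^{2,2}\subset \mathscr C^0(\bar\O)$, the branch value $\p$ stays close to $1$, hence $\p>0$ in a neighbourhood of $(k,0,0)$. A positive eigenfunction is necessarily the principal one, by Krein--Rutman or by the simplicity of the principal eigenvalue and the orthogonality of other eigenfunctions to it. So the branch agrees with $\tilde\p_{k,\tilde\theta,\tilde\eta}$ and $\tilde\lambda_{k,\tilde\theta,\tilde\eta}$.

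Composing this smooth branch with the smooth polynomial expressions defining $\mathcal F$ gives that $\mathcal F$ is $\mathscr C^\infty$, in particular $\mathscr C^2$, near $(k,0,0)$.
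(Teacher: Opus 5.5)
Your proposal is correct and follows essentially the same route as the paper, which only asserts that the simplicity of the principal eigenvalue gives smooth dependence of $(\tilde\lambda_{k,\tilde\theta,\tilde\eta},\tilde\p_{k,\tilde\theta,\tilde\eta})$ on the data. Your implicit function theorem argument is the standard way to make that one-line claim rigorous: the invertibility of $(\psi,\mu)\mapsto\bigl(-2\nu^2\Delta\psi+\mu,\,2\fint_\O\psi\bigr)$ is exactly the simplicity of the eigenvalue $k-2$, and the positivity argument then identifies the branch with the principal eigenpair, while also tracking the dependence on $k$.
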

Finally, we note that, by construction, there holds
\[ \text{ For any } k \text{ in $(1;+\infty)$ } \ \mathcal F(k,0,0)=0.\] Our goal is now to establish that there exists $k^*\in (1;2)$ such that any $k\in (1;k^*)$ is a bifurcation point for $\mathcal F$. Using \eqref{Eq:00}, we obtain the following: for any $(\xi,\eta)\in \left(W^{2,2}(\O)\right)^2$, 
\begin{equation}\label{dF[xi,zeta]}
    d_{\tilde\theta,\tilde\eta}\mathcal F(k,0,0)[\xi,\zeta]=\begin{pmatrix}
    -d\Delta \xi+(k-1)(2\dot \p_{\xi,\zeta}+\xi)\\ 
    -d\Delta \zeta+(k-1)\zeta +\frac2{k-1}\xi-\frac{2(k-2)}{k-1}\dot\p_{\xi,\eta}
\end{pmatrix}\end{equation}
where $\dot\p_{\xi,\eta}$ is the unique solution of 
\begin{equation}\label{Eq:DotPhi}
\begin{cases}
-2\nu^2\Delta\dot\p_{\xi,\eta}+\frac{2-k}{k-1}\xi+(k-1)\zeta=-\dot\lambda_{\xi,\eta}&\text{ in }\O,\, 
\\ \fint_\O \dot\p_{\xi,\eta}=0.
\end{cases}
\end{equation}
We will proceed as follows:  in Section \ref{Se:Fredholm}, we prove that $d_{\tilde\theta,\tilde\eta}\mathcal F(k,0,0)$ has Fredholm index 0 by proving its kernel has dimension 1 and its image has co-dimension 1. Finally, in Section \ref{Se:Transversality}, we establish the transversality condition.

\subsection{Kernel and Fredholm index of $d_{\tilde\theta,\tilde\eta}\mathcal F(k,0,0)$}\label{Se:Fredholm}
Our goal in this paragraph is to check that, for some $k^*\in (1;2)$ and for any $k\in (1;k^*)$, conditions \ref{(ii)Th:localbifurcation}--\ref{(iii)Th:localbifurcation} of Theorem \ref{Th:CR} are satisfied.
\subsubsection{Basic information regarding $d_{\tilde\theta,\tilde\eta}\mathcal F(k,0,0)$}
Let $(\xi,\zeta)\in \ker(d_{\tilde\theta,\tilde\eta}\mathcal F(k,0,0))$.  As
\[\begin{cases}
-d\Delta \xi+(k-1)(2\dot \p_{\xi,\zeta}+\xi)=0 &\text{ in }\O,\,\\ 
-d\Delta \zeta+(k-1)\zeta +\frac2{k-1}\xi-\frac{2(k-2)}{k-1}\dot\p_{\xi,\eta}=0 &\text{ in }\O,\,
\end{cases}\] and as $\fint_\O \dot\p_{\xi,\eta}=0$, integrating this system in space gives
\[\fint_\O \xi=\fint_\O \zeta=0.
\]
Consequently, integrating \eqref{Eq:DotPhi} in space yields
\[\dot \lambda_{\xi,\zeta}=0.\]
Recall that $\O=(0;\ell)$ and introduce
\begin{equation}\label{Eq:omega}\omega:=\frac{2\pi}\ell.\end{equation} Decomposing $(\xi,\zeta,\dot\p_{\xi,\zeta})$ as
\[ \xi=\sum_{n\in \Z}a_ne^{in\omega x}\,, \zeta=\sum_{n\in \N}b_ne^{in\omega x},\, \dot\p_{\xi,\zeta}=\sum_{n\in \Z}c_ne^{in\omega x}\] with 
\[ a_0=b_0=c_0=0,\, a_{-n}=\overline{a_n}\] we observe that, for any $n\geq 1$,
\[ \left(A_k+n^2\omega^2D_k\right)\begin{pmatrix}a_n\\b_n\\c_n\end{pmatrix}=\begin{pmatrix}0\\0\\0\end{pmatrix},\] where
\[ A_k:=\begin{pmatrix} k-1&0&2(k-1)\\ \frac2{k-1}&k-1&2\frac{2-k}{k-1}\\ \frac{2-k}{k-1}&k-1&0\end{pmatrix},\, D_k:=\begin{pmatrix}d&0&0\\ 0&d&0\\ 0&0&2\nu^2\end{pmatrix}.\]Consequently
\[ \begin{pmatrix}a_n\\b_n\\c_n\end{pmatrix}\neq 0\Rightarrow \det(A_k+n^2\omega^2D_k)=0.\]
We introduce the polynomial 
\[ P_k:X\mapsto \det(A_k+XD_k),\] and our goal, from now on, is to establish the following: there exists $k^*\in (1;2)$ such that, for any $k\in (1;k^*)$, there exists $\ell>0$ satisfying the following: there exists a unique $n\in \N\setminus\{0\}$ such that $\det(A_k+n^2\omega^2D_k)=0$, where we recall that $\omega$ is defined in \eqref{Eq:omega}. We refer to Proposition \ref{Prop:ker_dimension1} below. 

We observe that a tedious but straightforward computation gives 
\begin{equation}\label{Eq:Pk}
P_k(X)=2\nu^2d^2 X^3+4\nu^2d(k-1)X^2 +2dX \left[\frac{\nu^2}{d}(k-1)^2 -2(2-k)\right] +4 (k-1)^2.\end{equation} As we are looking for positive roots of $P_k$, which does not depend on $\ell$, it is worth observing the following fact (which is an immediate consequence of $P_k(0)=4(k-1)^2>0$ and $P_k(x)\to-\infty$ as $x\to-\infty$):
\begin{lemma}\label{Le:PositiveRoots}
For any $k>1$, $P_k$ has at most two positive roots.
\end{lemma}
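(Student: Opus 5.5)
The idea is to count sign changes of the coefficients of $P_k$ and apply Descartes' rule of signs, using the hint already given: $P_k(0)=4(k-1)^2>0$ and $P_k(x)\to-\infty$ as $x\to-\infty$.

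From \eqref{Eq:Pk}, $P_k$ is a real cubic with leading coefficient $2\nu^2d^2>0$. Read from the leading term down, its coefficients are
\[
2\nu^2d^2,\qquad 4\nu^2 d(k-1),\qquad 2d\Big[\tfrac{\nu^2}{d}(k-1)^2-2(2-k)\Big],\qquad 4(k-1)^2 .
\]
For $k>1$ the first, second and fourth are strictly positive. Only the third, the coefficient of $X$, can have either sign.

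\emph{Descartes' rule of signs.} The sign sequence is $(+,+,\pm,+)$, or $(+,+,+)$ when the $X$-coefficient vanishes. It therefore has either $0$ or $2$ sign changes. Descartes' rule bounds the number of positive roots, counted with multiplicity, by the number of sign changes. Hence $P_k$ has at most two positive roots.

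\emph{Alternative via the intermediate value theorem.} This is the argument suggested by the hint. Since $P_k(0)>0$ and $P_k(x)\to-\infty$ as $x\to-\infty$, $P_k$ has a root in $(-\infty,0)$. A cubic has exactly three complex roots counted with multiplicity, and this one is negative. So at most two roots are real and positive, and again $P_k$ has at most two positive roots.

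There is no real obstacle here. The only point to handle is the sign-indefinite coefficient of $X$, and both arguments above are indifferent to its sign.
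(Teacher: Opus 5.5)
Both of your arguments are correct, and the second one is exactly the paper's proof. The paper's entire justification is that $P_k(0)=4(k-1)^2>0$ while $P_k(x)\to-\infty$ as $x\to-\infty$. The cubic therefore has a negative root, and hence at most two positive ones, counted with multiplicity.

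Your primary argument, via Descartes' rule of signs, is a genuinely different route. It costs slightly more, since you must inspect every coefficient of $P_k$ rather than only its value at $0$ and its behaviour at $-\infty$. In return it gives more: the number of positive roots counted with multiplicity is $0$ or $2$, and it is $0$ as soon as the coefficient of $X$ is nonnegative. Positive roots can thus only exist when $\nu^2(k-1)^2<2d(2-k)$, which already forces $k<2$. This recovers the paper's later remark that $P_2$ has only positive coefficients. It also places $k^*$ strictly below the unique root in $(1;2)$ of $\nu^2(k-1)^2=2d(2-k)$: at $k=k^*$ there is a double positive root, so the $X$-coefficient must be negative there.

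The intermediate value argument remains the most economical proof of the statement as posed.
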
 

Our strategy reads as follows:
\begin{enumerate}
\item We first show that there exists $k^*\in (1;2)$ such that, for any $k\in (1;k^*)$, $P_k$ has exactly two distinct positive roots $0<X_2<X_3$. This is Lemma \ref{Lem:properties_of_DeltaP(k)}. 
\item We then define $\ell$ as $\frac{2\pi}\ell=\sqrt{X_3}$, so that $n=1$ is the only integer solution of $\det(A_k+n^2\omega^2D_k)=0$, thereby guaranteeing that $\dim(\ker(d_{\tilde\theta,\tilde\eta}\mathcal F(k,0,0)))=1$ for any $k\in (1;k^*)$. 
\item The fact that $d_{\tilde\theta,\tilde\eta}\mathcal F(k,0,0)$ has Fredholm index 0 is then a consequence of elementary linear algebra; we refer to Proposition \ref{Prop:ker_dimension1}.
\end{enumerate}

     \begin{lemma}\label{Lem:properties_of_DeltaP(k)}
        For any $d,\nu>0$, there exists $k^*>1$ such that $P_{k}$ has a double positive root $X^*$ and such that, for any $k \in (1,k^*)$, $P_{k^*}$ has exactly two distinct positive real roots $0<X_2<X_3$. Moreover, 
        \[1< k^*<2.\]
    \end{lemma}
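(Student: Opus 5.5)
The plan is to study the sign of $\min_{X>0}P_k(X)$ as $k$ ranges over $(1;2]$, and to define $k^*$ as the first value of $k$ at which this minimum reaches $0$. We use the explicit form \eqref{Eq:Pk}. Write $\varepsilon:=k-1>0$ and
\[ c_1(k):=2d\left[\frac{\nu^2}{d}(k-1)^2-2(2-k)\right] \]
for the coefficient of $X$. The coefficients of $X^3$ and $X^2$ and the constant term $4(k-1)^2$ are all positive. By Descartes' rule of signs, $P_k$ can therefore have a positive root only if $c_1(k)<0$. The condition $c_1(k)<0$ reads $\nu^2(k-1)^2<2d(2-k)$. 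The left side increases in $k$ and the right side decreases in $k$, so there is a unique $\bar k\in(1;2)$ with $c_1<0$ on $(1;\bar k)$ and $c_1\geq 0$ on $[\bar k;+\infty)$. In particular $c_1(2)=2\nu^2>0$. Hence for $k\geq \bar k$ we have $P_k>0$ on $[0;+\infty)$.

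\textbf{Step 1: behaviour near $k=1$.} As $\varepsilon\to 0^+$, $P_k$ converges locally uniformly to
\[ P_1(X)=2\nu^2d^2X^3-4dX, \]
which is negative at $X_0:=(\nu^2 d)^{-1/2}$, since $P_1(X_0)=-2dX_0<0$. By continuity, $P_k(X_0)<0$ for all $k$ close enough to $1$. Combined with $P_k(0)>0$ and $P_k(X)\to+\infty$ as $X\to+\infty$, the intermediate value theorem gives at least two positive roots. Lemma \ref{Le:PositiveRoots} then shows there are exactly two, $0<X_2<X_3$, and they are simple.

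\textbf{Step 2: definition of $k^*$.} For $k\in(1;\bar k)$ set $m(k):=\min_{X\geq 0}P_k(X)$. This minimum is attained because $P_k\to+\infty$. Since $P_k'(0)=c_1(k)<0$, it is attained at an interior critical point $X_k>0$. This $X_k$ is the unique positive root of the quadratic $P_k'$, and it depends continuously on $k$ by the explicit quadratic formula. Hence $m(k)=P_k(X_k)$ is continuous on $(1;\bar k)$. Step 1 gives $m<0$ near $1$. Define
\[ k^*:=\sup\{\kappa\in(1;\bar k]:\ m<0 \text{ on } (1;\kappa)\}. \]
For every $k\in(1;k^*)$ we have $m(k)<0$, and the argument of Step 1 gives exactly two distinct positive roots $0<X_2<X_3$. (The statement writes $P_{k^*}$ here, but the claim to be proved is about $P_k$ for $k\in(1;k^*)$.)

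It remains to show $k^*<\bar k$ and $m(k^*)=0$. By continuity, $m(k^*)\leq 0$ as a limit of negative values. To get the reverse inequality and rule out $k^*=\bar k$, I would show that $m(k)>0$ for $k$ slightly below $\bar k$. There $c_1(k)\to 0^-$, so the positive critical point $X_k\to 0$ while $P_k(0)=4(k-1)^2$ stays bounded below by a positive constant; hence $m(k)>0$. By the definition of the supremum, this places $k^*$ strictly below $\bar k$ and forces $m(k^*)=0$. Setting $X^*:=X_{k^*}>0$, we get $P_{k^*}(X^*)=P_{k^*}'(X^*)=0$, so $X^*$ is a double positive root. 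Finally $1<k^*<\bar k<2$.

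\textbf{Main obstacle.} The delicate point is the continuity of $k\mapsto m(k)$ together with the fact that the minimiser stays strictly positive and does not degenerate. Both follow from solving $P_k'(X)=0$ explicitly as a quadratic with negative constant term $c_1(k)$, whose unique positive root is then a continuous function of $k$ on $(1;\bar k)$. A secondary check is the limit $X_k\to 0$ as $k\to\bar k^-$ used in Step 2, which I would verify directly from the same quadratic formula.
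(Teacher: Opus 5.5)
Your proof is correct, but it takes a different route from the paper's. The paper never computes the critical point of $P_k$. It observes that $\partial_k P_k(X)=4\nu^2dX^2+(4\nu^2(k-1)+4d)X+8(k-1)>0$ for $X\geq 0$, so $q_k:=\min_{\R_+}P_k$ is increasing in $k$. It then combines $q_1<0$ (from the explicit roots $0,\pm\sqrt{2/(d\nu^2)}$ of $P_1$) with $q_2>0$ (all coefficients of $P_2$ are positive) and concludes by an intermediate value argument.

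You replace this monotonicity with two other ingredients:
\begin{enumerate}
\item the sign of the linear coefficient $c_1(k)$, which gives an explicit cut-off $\bar k$ beyond which $P_k>0$ on $[0;+\infty)$;
\item the explicit positive root $X_k$ of the quadratic $P_k'$, which gives continuity of $m(k)$ and the limit $m(k)\to 4(\bar k-1)^2>0$ as $k\to\bar k^-$.
\end{enumerate}
You then take $k^*$ to be the first zero of $m$.

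\textbf{What your route gives.} You get a sharper, explicit bound $k^*<\bar k=1+\frac{\sqrt{d^2+2d\nu^2}-d}{\nu^2}<2$. You also fully justify the continuity of the minimum and the existence of the double root $X^*=X_{k^*}$, both of which the paper uses without comment.

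\textbf{What the paper's route gives.} Since $q_k$ is strictly increasing, $k^*$ is its unique zero. This yields the full dichotomy: two simple positive roots for $k<k^*$, a double one at $k^*$, and none for $k>k^*$. Your $k^*$ is only the first zero of $m$, and your argument says nothing about $(k^*;\bar k)$. That is harmless for the lemma as stated. Still, adding the one-line computation of $\partial_kP_k$ to your Step 2 would make $m$ strictly increasing and give you the dichotomy as well.
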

   
            \begin{proof}[Proof of Lemma \ref{Lem:properties_of_DeltaP(k)}]
First, observe that if $k=1$, $P_1$ has exactly three real roots, namely,         \[X_1=-\sqrt{\frac{2}{d\nu^2}}, \quad X_2=0, \quad X_3=\sqrt{\frac{2}{d\nu^2}}.\] Second, note that 
\[P_1'(0)<0\] and define, for any $k\geq 1$, $q_k:=\min_{\R_+}P_k$, so that $q_1<0$. Third, observe that a simple computation shows that, for any $x\geq 0$, $k\mapsto P_k(x)$ is increasing in $k$ (see Fig. \ref{fig:cubic polynomial} for an illustration): indeed, for any $k>1$ and any $x\geq 0$
\[\frac{\partial P_k(x)}{\partial k}=4\nu^2dx^2+(4\nu^2(k-1)+4d)x+8(k-1)>0.\] Finally, note that as for $k=2$ $P_2$ only has positive coefficients, $q_2>0$.  We have thus established that $q_1<0$, $q_2>0$, and that $q_k$ is monotone in $k$. As a necessary and sufficient condition for $P_k$ to have two distinct positive real roots is 
\[ P_k(0)>0,\, q_k<0\] the conclusion follows: there exists $k^*\in (1;2)$ such that, for any $k\in (1;k^*)$, $P_k$ has exactly two distinct positive real roots $X_2,\, X_3$.
        \begin{figure}[h!]
            \centering
            \includegraphics[width=0.5\linewidth]{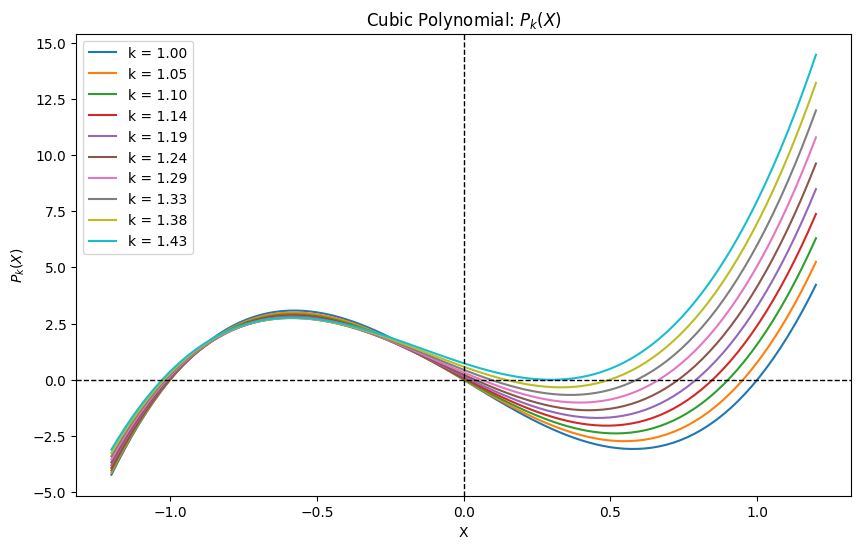}
            \caption{Cubic polynomial $P_{k}(X)$ for different values of $k \in (1, \, k^*)$, $d=2\nu^2>0$.}
            \label{fig:cubic polynomial}
        \end{figure}
    \end{proof}
   We now prove that $d_{\tilde\theta,\tilde\eta}\mathcal F(k,0,0)$ has a one dimensional kernel and Fredholm index 0 provided $\ell$ is chosen correctly.
\begin{proposition}\label{Prop:ker_dimension1}Let $k^*$ be given by Lemma \ref{Lem:properties_of_DeltaP(k)}.
    For any $k\in(1,k^*)$,  let $X_3$ be the largest positive root of the polynomial $P_k$ and define $\ell:=\frac{2\pi}{\sqrt{X_3}}$, $\Omega=(0;\ell)$. There holds:
    \begin{enumerate}
    \item $\ker\left(d_{\tilde\theta,\tilde\eta}\mathcal{F}(k,0,0)\right)$ has dimension $1$.\item $\Im\left(d_{\tilde\theta,\tilde\eta}\mathcal{F}(k,0,0)\right)$ has co-dimension $1$.\end{enumerate}
\end{proposition}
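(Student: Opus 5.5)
We work on the periodic/Neumann setting of the paper, using the Fourier decomposition already introduced. The proof has three steps: reduce the kernel to a mode-by-mode linear algebra problem, count the modes that solve it, and then identify the image via a Fredholm argument.

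\textbf{Reduction of the kernel to single modes.} Let $(\xi,\zeta)$ be in the kernel. By the computation preceding Lemma \ref{Le:PositiveRoots}, $\fint_\O\xi=\fint_\O\zeta=0$ and $\dot\lambda=0$. Moreover, each Fourier triple $(a_n,b_n,c_n)$ satisfies $(A_k+n^2\omega^2D_k)(a_n,b_n,c_n)^T=0$, where $c_n$ is determined by $(a_n,b_n)$ through \eqref{Eq:DotPhi}. A nonzero mode $n\ge1$ can therefore only occur if $P_k(n^2\omega^2)=0$. With $\omega^2=X_3$, this reads $P_k(n^2X_3)=0$. By Lemma \ref{Lem:properties_of_DeltaP(k)}, the only positive roots of $P_k$ are $X_2<X_3$, and $n^2X_3>X_3$ for $n\ge2$. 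Hence only $n=1$ survives.

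\textbf{Counting the kernel.} For $n=1$ we must check that $\ker(A_k+X_3D_k)$ is one-dimensional. Since $X_3$ is a simple root (the roots $X_2,X_3$ are distinct), the determinant vanishes but the matrix is nonzero. Exhibiting a nonvanishing $2\times2$ minor shows the rank is exactly $2$; for instance, the minor formed by rows 1,2 and columns 1,2 equals $(k-1+dX_3)^2>0$. The kernel vector can then be chosen real, because the matrix is real.

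There is one subtlety. On $(0;\ell)$ with Neumann conditions, the natural basis is $\cos(n\omega x/2)$ rather than $e^{in\omega x}$. Read literally, complex exponentials would give a two-dimensional kernel, spanned by $\cos(\omega x)v$ and $\sin(\omega x)v$. I would follow the paper's setting and work in the even, $\ell$-periodic class (equivalently, the Neumann eigenbasis $\cos(2\pi n x/\ell)$, $n\ge1$), where only $\cos(\omega x)$ appears. The kernel is then $\R\,\xi_0$ with $\xi_0=\cos(\omega x)(a,b)$, where $(a,b,c)$ spans $\ker(A_k+X_3D_k)$. This choice of function space is the step to state carefully; I expect it to be the main obstacle to rigour, rather than the algebra.

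\textbf{Image has codimension one.} Write $L=d_{\tilde\theta,\tilde\eta}\mathcal F(k,0,0)$. It equals $-d\Delta\,\mathrm{Id}$ plus a zeroth-order part plus $(\xi,\zeta)\mapsto\dot\p_{\xi,\zeta}$ terms. The map $(\xi,\zeta)\mapsto\dot\p_{\xi,\zeta}$ is compact from $(W^{2,2})^2$ into $L^2$, since it gains two derivatives. Hence $L=(-d\Delta+(k-1))\mathrm{Id}+K$ with $K$ compact from $(W^{2,2})^2$ to $(L^2)^2$. The first operator is an isomorphism, so $L$ is Fredholm of index $0$. Combined with $\dim\ker L=1$, this gives codimension $1$.

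For an explicit description, one can check it mode by mode. In Fourier modes $n\ge0$, $L$ acts blockwise: after eliminating $c_n$ through the third row, it becomes a $2\times2$ matrix $M_n$. For $n\ne1$, including $n=0$ (where $\dot\lambda$ absorbs the mean), $M_n$ is invertible. For $n=1$, $M_1$ has rank $1$. The uniform invertibility of $M_n$ for large $n$, which holds because $M_n\sim n^2\omega^2 d\,\mathrm{Id}$, together with elliptic regularity, shows that $\Im L$ is exactly the set of $(f,g)\in(L^2)^2$ whose first Fourier coefficient lies in $\Im M_1$. This is a closed subspace of codimension one.
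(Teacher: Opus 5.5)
Within the class you actually work in (even $\ell$-periodic functions), your proof is correct. For the kernel it is the paper's argument: Fourier modes, $P_k(n^2X_3)\neq 0$ for $n\ge 2$ since $X_2<X_3$ are the only positive roots, and a one-dimensional kernel for $A_k+X_3D_k$. The paper gets this last point from the simplicity of $X_3$. That tacitly uses the invertibility of $D_k$, so that algebraic multiplicity one for the pencil forces geometric multiplicity one. Your nonvanishing minor $(k-1+dX_3)^2$ gives rank $2$ more directly.

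For the range you take a different route. The paper only counts singular blocks in a block-diagonal family. You instead write $L=(-d\Delta+(k-1))\mathrm{Id}+K$ with $K$ compact: the $\dot\p$ terms by the elliptic gain, and the off-diagonal term $\frac{2}{k-1}\xi$ by Rellich (a term you did not mention). This gives index $0$ with no mode-by-mode analysis. Your mode-wise description is also sharper than the paper's. $L$ acts on each mode through the $2\times 2$ Schur complement $M_n$, not a $3\times 3$ block. The bound $\|M_n^{-1}\|=O(n^{-2})$ is exactly what block counting needs to conclude that the range is closed of codimension one, and the paper does not state it.

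On the function space you are right, and more careful than the paper. With the full $\ell$-periodic expansion the kernel contains both $(a,b)\cos(\omega x)$ and $(a,b)\sin(\omega x)$. The paper's choice $\sin(\omega x)$ does not even satisfy $\partial_\nu=0$ at $x=0$. Restricting to even functions is the right fix.

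However, your parenthetical claim that this is \emph{equivalent} to the Neumann problem on $(0;\ell)$ is false, and it matters. The Neumann eigenbasis on $(0;\ell)$ is $\cos(m\pi x/\ell)$, $m\ge 0$, which is your own $\cos(m\omega x/2)$. The even $\ell$-periodic class contains only even $m$, i.e.\ functions symmetric about $x=\ell/2$; this is the Neumann problem on $(0;\ell/2)$.

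On the full Neumann space of $(0;\ell)$, the mode $m=1$ has squared frequency $X_3/4<X_3$, which your inequality $n^2X_3>X_3$ does not exclude. Moreover $P_k(X_3/4)=0$ exactly when $X_2=X_3/4$. As $k\to 1^+$ we have $X_2/X_3\to 0$, since the roots of $P_1$ are $0$ and $\pm\sqrt{2/(d\nu^2)}$. As $k\to k^*$ we have $X_2/X_3\to 1$, since the two positive roots merge. Hence $X_2=X_3/4$ for some $k\in(1;k^*)$, and there the Neumann kernel on $(0;\ell)$ is two-dimensional.

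So the restriction must be imposed as a genuine hypothesis on the spaces used in Theorem \ref{Th:CR}, not presented as a reformulation. Any one of these works: the symmetric subspace, taking $\O=(0;\ell/2)$, or excluding the values of $k$ with $P_k(X_3/4)=0$. Stated that way, your proof is complete.
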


\begin{proof}[Proof of Proposition \ref{Prop:ker_dimension1}]
The fact that $\ker\left(d_{\tilde\theta,\tilde\eta}\mathcal{F}(k,0,0)\right)$ has dimension $1$ when $\ell= \frac{2\pi}{\sqrt{X_3}}$ follows by construction:  first, by construction, $n=1$ is the only solution of $ \det(A_k+n^2\omega^2D_k)=0$. As $X_3$ is a simple root of $P_k$, $\ker(A_k+\omega^2D_k)$ has dimension 1. Fix $(a,b,c)\in \ker(A_k+\omega^2D_k)\setminus\{0\}$ and define $(\xi_0,\zeta_0,\dot\varphi_0):=\left(a,b,c\right)\sin(\omega x)$. It follows that 
\[ \ker\left(d_{\tilde\theta,\tilde\eta} \mathcal{F}(k,0,0)\right)=\R(\xi_0,\zeta_0).\]

To study the co-dimension of the image, note that, in Fourier variables, $d_{\tilde\theta, \tilde\eta}\mathcal{F}(k,0,0)$ decomposes into a block-diagonal family $\left\{ A+\omega^2n^2D\right\}_{n\in\mathbb{N}}$, where each block is a $3\times 3$ matrix. By Lemma \ref{Lem:properties_of_DeltaP(k)} and the fact that, for any $n>1$, $\det(A_k+n^2\omega^2D_k)>0$, the first of those blocks is the only singular one and has a one-dimensional kernel. It follows that its image has co-dimension 1.
\end{proof}

\subsection{Study of the transversality condition}\label{Se:Transversality}
The goal of this section is to establish Point \ref{(iv)Th:localbifurcation} of Theorem \ref{Th:CR}, namely:

    \begin{proposition}\label{Prop:transversalitycondition}
Let $k^*$ be given by Lemma \ref{Lem:properties_of_DeltaP(k)}. Let $k\in (1;k^*)$, and let $X_3$ be the largest positive root of the polynomial $P_k$,  $\ell:=\frac{2\pi}{\sqrt{X_3}}$, $\Omega=(0;\ell)$. Finally, let $(\xi_0,\zeta_0)\in\ker \left(d_{\tilde\theta,\tilde\eta}\mathcal{F}(k,0,0)\right)\setminus\{0\}$. There holds
\[d_{k} d_{\tilde\theta,\tilde\eta} \mathcal{F} (k,0,0) (\xi_0,\zeta_0) \not\in \Im(d_{\tilde\theta,\tilde\eta} \mathcal{F} (k,0,0) ).\]
    \end{proposition}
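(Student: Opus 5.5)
Since the linearised operator is block-diagonal in the Fourier basis, the whole question reduces to the $n=1$ block. The image of $d_{\tilde\theta,\tilde\eta}\mathcal F(k,0,0)$ then has an explicit description. Write $L_k:=d_{\tilde\theta,\tilde\eta}\mathcal F(k,0,0)$ and view it, as in the proof of Proposition \ref{Prop:ker_dimension1}, as the family of $3\times 3$ matrices $M_n(k):=A_k+n^2\omega^2D_k$, where the third row encodes the constraint defining $\dot\p$ through \eqref{Eq:DotPhi}. A pair $(f,g)$ lies in $\Im(L_k)$ if and only if, for every $n$, the vector $(\hat f_n,\hat g_n,0)$ lies in the range of $M_n(k)$; here the third component stays $0$ because $\dot\p$ is an internal unknown. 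For $n\geq 2$ the matrix is invertible, so the only constraint sits at $n=1$. Let $w\in\ker(M_1(k)^T)\setminus\{0\}$, which is one-dimensional because $X_3=\omega^2$ is a simple root of $P_k$. Then
\[ (f,g)\in\Im(L_k)\iff w\cdot(\hat f_1,\hat g_1,0)=0 \]
(and likewise for the $\cos$ component).

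Next I compute $d_kL_k[\xi_0,\zeta_0]$ with $(\xi_0,\zeta_0,\dot\p_0)=(a,b,c)\sin(\omega x)$. Differentiating \eqref{dF[xi,zeta]} in $k$ at fixed $(\xi,\zeta)$ needs care, because $\dot\p_{\xi,\zeta}$ depends on $k$ through \eqref{Eq:DotPhi}. The cleanest way is to differentiate the augmented system. In the $\sin(\omega x)$ mode, the triple $(\xi,\zeta,\dot\p)$ satisfies $M_1(k)(a,b,c)^T=(r_1,r_2,0)^T$, where $(r_1,r_2)$ are the coefficients of $L_k[\xi,\zeta]$. Differentiating in $k$ while keeping $(a,b)$ fixed gives
\[ (\partial_kr_1,\partial_kr_2,0)^T=\partial_kA_k\,(a,b,c)^T+M_1(k)(0,0,\partial_kc)^T, \]
since $D_k$ is independent of $k$. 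Pairing with $w$ removes the second term. So transversality is equivalent to
\[ w^T(\partial_kA_k)\,(a,b,c)^T\neq 0. \]

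It remains to check this scalar inequality. There is an observation that makes this efficient. Differentiate $\det M_1(k)=P_k(\omega^2)$ in $k$ by Jacobi's formula. Since $M_1(k)$ has rank $2$, $\operatorname{adj}M_1(k)=\kappa\,(a,b,c)^Tw$ for some $\kappa\neq 0$. Hence
\[ \partial_kP_k(\omega^2)=\operatorname{tr}\big(\operatorname{adj}M_1(k)\,\partial_kA_k\big)=\kappa\,w^T(\partial_kA_k)(a,b,c)^T. \]
The proof of Lemma \ref{Lem:properties_of_DeltaP(k)} shows $\partial_kP_k(x)>0$ for every $x\geq 0$, in particular at $x=X_3=\omega^2$. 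The desired quantity is therefore nonzero, and transversality follows.

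The step I expect to be the main obstacle is the bookkeeping in the second paragraph. The bifurcation parameter $k$ enters both the explicit coefficients and the implicit eigenfunction derivative $\dot\p$, and also the third (constraint) row of $A_k$. One must make sure that the third component of the right-hand side really stays zero after differentiation, and that the matrix being differentiated is exactly the one whose determinant equals $P_k$, so that Jacobi's formula applies. If the sign or normalisation conventions in \eqref{Eq:DotPhi} differ slightly, I would fall back on computing $w$ and $(a,b,c)$ explicitly from $A_k+X_3D_k$ and verifying the scalar inequality directly, using that $X_3$ is a simple root.
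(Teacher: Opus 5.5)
Your proof is correct and takes a genuinely different route from the paper's.

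\textbf{The paper's route.} The paper computes $d_k d_{\tilde\theta,\tilde\eta}\mathcal F(k,0,0)(\xi_0,\zeta_0)=M(a,b)^{T}\sin(\omega\cdot)$ explicitly, including the $\partial_k\dot\p$ term, and pairs it with the cokernel coefficients $(\alpha,\beta)$. It then uses \eqref{Eq:LocalInformation}--\eqref{Eq:PropAlpha} to reduce the pairing to $2a\beta\frac{d\omega^2}{(k-1)^3}N(\omega^2)$. Finally it proves $N(\omega^2)\neq0$ through the Euclidean division $P_k=QN+R$ with $Q<0<R$ on $[0,+\infty)$.

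\textbf{Your route.} You differentiate the augmented identity $M_1(k)(a,b,c)^{T}=(r_1,r_2,0)^{T}$ in $k$. A left null vector $w$ then kills the $\partial_k c$ term; the paper's $(\alpha,\beta,\gamma)$ is such a $w$, since \eqref{systemsatisfiedby_alphabetagamma} is the transposed system. Jacobi's formula with $\operatorname{adj}M_1(k)=\kappa\,(a,b,c)^{T}w$ identifies $w^{T}(\partial_kA_k)(a,b,c)^{T}$ with $\kappa^{-1}\partial_kP_k(\omega^2)$. This is nonzero because $\partial_kP_k(\omega^2)>0$ is exactly the monotonicity proved in Lemma \ref{Lem:properties_of_DeltaP(k)}.

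The bookkeeping you were worried about is fine:
the third row of $A_k$ is \eqref{Eq:DotPhi} on the first mode, where $\dot\lambda=0$;
$P_k(X)=\det(A_k+XD_k)$ holds for this very matrix;
$D_k$ does not depend on $k$;
and $\omega$ is frozen as $k$ varies.
One can also check that, on the zero set of $P_k$, your scalar equals the paper's $2a\beta\frac{d\omega^2}{(k-1)^3}N(\omega^2)$.

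\textbf{What each route buys.} Your argument dispenses with the eigenvector algebra and the auxiliary polynomial $N$. It shows transversality to be the transversal zero-crossing in $k$, at fixed $\ell$, of the determinant of the critical Fourier block, which is the same mechanism that produces $k^*$. It also gives $\dim\ker M_1(k)=1$ for free, since $\partial_k\det M_1(k)\neq0$ forces $\operatorname{adj}M_1(k)\neq0$. The paper's computation, in exchange, provides explicit null vectors and an explicit expression for the pairing, which it reuses for $k'(0)$ in Proposition \ref{Prop:characterizationofbifurcationbranch}.

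\textbf{A minor caveat.} Your parenthetical about the $\cos$ component should not mean two constraints at $n=1$. That would make the kernel two-dimensional and the image of codimension two. As in the paper, one works in a symmetry class where a single first mode spans the kernel, and your computation is unchanged for that mode.
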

    
    \begin{proof}[Proof of Proposition \ref{Prop:transversalitycondition}]
    We know that $(\xi_0,\zeta_0,\dot\p_{\xi_0,\zeta_0})=(a,b,c)\sin(\omega \cdot)$, where $(a,b,c)\neq (0,0,0)$ solves
    \[(A_k+\omega^2D_k)\begin{pmatrix}a\\b\\c\end{pmatrix}=\begin{pmatrix}0\\0\\0\end{pmatrix}\] or, equivalently,     \begin{equation}\begin{cases}\label{systemsatisfiedby_abc}
        a(d\omega^2+k-1)+2c(k-1)=0,\\
        b(d\omega^2+k-1)+2\frac{a}{k-1}+2c \frac{2-k}{k-1}=0,\\
        2\nu^2\omega^2c+a\frac{2-k}{k-1}+b (k-1)=0.
    \end{cases}\end{equation} In particular, we deduce that 
    \begin{equation}\label{Eq:LocalInformation}
    \begin{cases}
    ac<0,\, 
          \\  2c= \frac{1}{\nu^2 \omega^2} \left( -\frac{2-k}{k-1}a - (k-1)b\right),
         \\   b = a \frac{\nu^2\omega^2(d\omega^2+k-1)-(2-k)}{(k-1)^2}.
    \end{cases}
    \end{equation}
Now, observe that     
    \[d_{k} d_{\tilde\theta,\tilde\eta} \mathcal{F} (k,0,0) (\xi_0,\zeta_0)= \begin{pmatrix}
        2(k-1) \partial_{k} \dot \varphi +\xi_0 +2\dot\varphi_{\xi_0,\zeta_0}\\
        \zeta_0 -\frac{2}{(k-1)^2}\left(\dot\varphi_{\xi_0,\zeta_0}+\xi_0\right) +2\frac{2-k}{k-1}\partial_{k}\dot\varphi
    \end{pmatrix}\] where
     $\partial_{k} \dot \varphi$ solves \[-2\nu^2 \Delta \partial_{k}\dot\varphi -\frac{\xi_0}{(k-1)^2}+\zeta_0=0,\] so that
     
    \begin{equation*}\label{p_k0dotvarphi_inlocalbifurcationtheorem}
        \partial_{k}\dot\varphi = \frac{1}{2\nu^2\omega^2}\left( \frac{a}{(k-1)^2}-b \right)\sin(\omega\cdot).
    \end{equation*} Consequently,  \begin{align*}
    \partial_{k} d_{\tilde\theta,\tilde\eta} \mathcal{F} (k,0,0) (\xi_0,\zeta_0)
        &= \begin{pmatrix}\frac{k-1}{\nu^2\omega^2}\left( \frac{a}{(k-1)^2}-b\right) +a+2c\\
            b - \frac{2}{(k-1)^2}(c+a) + \frac{2-k}{\nu^2\omega^2(k-1)}\left( \frac{a}{(k-1)^2}-b\right)\end{pmatrix}\sin(\omega\cdot)\\
        &= \begin{pmatrix}
            a\left( 1+\frac{1}{\nu^2\omega^2}\right)-2b \frac{k-1}{\nu^2\omega^2}\\
            -\frac{2a}{(k-1)^3\nu^2\omega^2}\left[ \nu^2\omega^2(k-1)-(2-k)\right] + b \left( 1+\frac{1}{\nu^2\omega^2}\right)\end{pmatrix}\sin(\omega\cdot)
            \\&=M\begin{pmatrix}a\\b\end{pmatrix}\sin(\omega\cdot)
    \end{align*}    with 
    \[ M:= \begin{pmatrix}
        1+\frac{1}{\nu^2\omega^2} & -2\frac{(k-1)}{\nu^2\omega^2}\\
        \frac{-2}{\nu^2\omega^2(k-1)^3} \left[\nu^2\omega^2(k-1)-(2-k)\right] &1+\frac{1}{\nu^2\omega^2}
    \end{pmatrix}.\]
Now, let $(\Phi_0,\Psi_0)\in  \left(\Im \left(d_{\tilde \theta, \tilde \eta} \mathcal{F}(k,0,0)\right)\right)^\perp=\ker(A^*+\omega^2D)$ where $A=\mathrm{diag}(A_k)_{n\in \N\setminus\{0\}}, D=\mathrm{diag}(n^2D_k)_{n\in \N\setminus\{0\}}$. As for any $n>1$ $\det(A_k+n^2\omega D_k)>0$ it follows that $(\Phi_0,\Psi_0,\dot\p_{\Phi_0,\Psi_0})=(\alpha,\beta,\gamma)\sin(\omega\cdot)$, with $(A_k+\omega^2D_k)\begin{pmatrix}\alpha\\\beta\\\gamma \end{pmatrix}=\begin{pmatrix}0\\0\\0\end{pmatrix}$.
In particular, 
 \begin{equation}\label{systemsatisfiedby_alphabetagamma}\begin{cases}
        \alpha \left( d\omega^2+k-1\right) + \frac{2}{k-1}\beta + \frac{2-k}{k-1}\gamma = 0,\\
        \beta \left( d\omega^2+k-1\right) + (k-1)\gamma = 0,\\
        2\nu^2\omega^2\gamma +2(k-1)\alpha + 2 \frac{2-k}{k-1}\beta = 0,
    \end{cases}\end{equation}
which entails
\begin{equation}\label{Eq:PropAlpha}
\begin{cases}
\beta\gamma<0,\, 
\\        \alpha = \beta \frac{\nu^2\omega^2(d\omega^2+k-1)-(2-k)}{(k-1)^2},
\\ \alpha\beta\neq 0,\, 
\\         \gamma= \frac{1}{\nu^2\omega^2}\left( -\alpha(k-1)-\beta\frac{2-k}{k-1}\right).
\end{cases}
\end{equation}
We are now going to show that 
\begin{equation}\label{Eq:Goal}
\left\langle M\begin{pmatrix} a\\ b\end{pmatrix}, \begin{pmatrix}\alpha\\\beta\end{pmatrix}\right\rangle\neq 0,\end{equation} which suffices to conclude. Replacing $\alpha,\, b$ with the values given in \eqref{Eq:LocalInformation}--\eqref{Eq:PropAlpha} we obtain 
\begin{align*}
\left\langle M\begin{pmatrix} a\\ b\end{pmatrix}, \begin{pmatrix}\alpha\\\beta\end{pmatrix}\right\rangle&=2a\beta \frac{\nu^2\omega^2d\omega^2}{\nu^2\omega^2(k-1)^3}\left[-\nu^2\omega^2(d\omega^2+k-1)+3-k\right]
\\&=2a\beta \frac{\nu^2\omega^2d\omega^2}{\nu^2\omega^2(k-1)^3} N(\omega^2),
\end{align*}
where 
\[ N(X)=-\nu^2X(dX+k-1)+3-k.\] As $a,\, \beta\neq 0$ from \eqref{Eq:LocalInformation}--\eqref{Eq:PropAlpha}, it suffices to prove that $N(\omega^2)\neq 0$. However, the euclidean division of $P_k$ by $N$ yields
\[P_k=QN+R \] with 
\[ R(X)=2(k-1)(dX+k+1),\, Q(X)=-2(dX+k-1).\] As $Q<0$ in $[0;+\infty)$, and as $R>0$  in $[0;+\infty)$, the fact that $P_k(\omega^2)=0$ implies $N(\omega^2)\neq 0$, which concludes the proof.\end{proof}
Theorem  \ref{Th:NonUniqueness} then follows by the Crandall-Rabinowitz theorem.

\subsection{Criticality of the bifurcation branch}
Although we cannot fully characterise the bifurcation, we can prove that it is not transcritical (see also Fig.\ref{fig:characterization_bifurcation}).
    \begin{proposition}[Characterization of the bifurcation]\label{Prop:characterizationofbifurcationbranch}
        Let $\epsilon>0$ and consider the following parametrization for the bifurcation branch
        \[\left\{ \sigma\in\R \, : \, \abs{\sigma} < \epsilon \right\} \ni s \longmapsto \left( k(s), \tilde \theta (s), \tilde \eta (s)\right) \in \R \times \left( W^{2,2}\left((0;l)\right)\right)^2,\]
        with $\tilde \theta(s) = \bar \theta(s) -k(s) +1, \, \tilde \eta(s) = \bar \eta(s) - \frac{1}{k(s)-1}$. Then the bifurcation is not transcritical
        \[ k'(0)= 0.\]
    \end{proposition}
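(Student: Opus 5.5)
The plan is to use the standard formula for the first derivative of the bifurcation parameter along a Crandall--Rabinowitz branch. Write the branch as $(\tilde\theta(s),\tilde\eta(s))=s(\xi_0,\zeta_0)+o(s)$ and $k(s)=k_0+sk'(0)+o(s)$, where $k_0\in(1;k^*)$ and $\ell=2\pi/\sqrt{X_3}$ are as in Proposition \ref{Prop:transversalitycondition}. Expanding $\mathcal F(k(s),\tilde\theta(s),\tilde\eta(s))=0$ to second order in $s$ and pairing with the cokernel element $(\Phi_0,\Psi_0)=(\alpha,\beta)\sin(\omega\cdot)$ gives the classical identity
\[k'(0)=-\frac12\,\frac{\left\langle d^2_{\tilde\theta,\tilde\eta}\mathcal F(k_0,0,0)[(\xi_0,\zeta_0),(\xi_0,\zeta_0)],(\Phi_0,\Psi_0)\right\rangle}{\left\langle d_kd_{\tilde\theta,\tilde\eta}\mathcal F(k_0,0,0)(\xi_0,\zeta_0),(\Phi_0,\Psi_0)\right\rangle}.\]
The denominator is nonzero by Proposition \ref{Prop:transversalitycondition}, since it equals $\tfrac{\ell}{2}\,\langle M(a,b)^T,(\alpha,\beta)^T\rangle\neq0$. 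It therefore suffices to show that the numerator vanishes.

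The key step is to compute the second differential structurally rather than explicitly. The quadratic terms of $\mathcal F$ come from three sources. The first is the products $\tilde\theta^2$, $\tilde\theta\tilde\eta$, $\tilde\eta\tilde\p$ and $\tilde\theta(\tilde\p^2-1)$, evaluated on first-order quantities that are all multiples of $\sin(\omega x)$. The second is $(\dot\p)^2$, coming from the expansion of $\tilde\p^2$. The third is the second-order eigenfunction correction $\ddot\p$. Its equation is
\[-2\nu^2\Delta\ddot\p+\text{(second-order constant coefficient terms)}\,\ddot\p=\text{quadratic expressions in }\sin(\omega x)\text{ plus }\ddot\lambda,\]
so its right-hand side lies in the span of $\{1,\cos(2\omega x)\}$. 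We also need $\dot\lambda=0$, which holds because $\fint\xi_0=\fint\zeta_0=0$. It follows that $\ddot\p$ belongs to $\mathrm{span}\{1,\cos(2\omega x)\}$. Every product of two functions proportional to $\sin(\omega x)$ lies in this span as well, since $\sin^2(\omega x)=\tfrac12(1-\cos(2\omega x))$. Hence $d^2\mathcal F[(\xi_0,\zeta_0)]^2$ has both components in $\mathrm{span}\{1,\cos(2\omega x)\}$.

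Orthogonality then finishes the argument. On $(0;\ell)$ with $\omega=2\pi/\ell$, the function $\sin(\omega x)$ is $L^2$-orthogonal to both $1$ and $\cos(2\omega x)$. So the pairing with $(\alpha,\beta)\sin(\omega\cdot)$ vanishes, giving $k'(0)=0$.

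The main obstacle is making the frequency-doubling claim rigorous for the eigenfunction part. One must check that the map $(\tilde\theta,\tilde\eta)\mapsto\tilde\p$ is $\mathscr C^2$; this holds because the principal eigenvalue is simple, as already used above. One must also check that its second derivative is given by the formal expansion, including the normalisation $\fint\tilde\p^2=1$, which contributes a constant term $-\fint\dot\p^2$ to $\ddot\p$. Constants are harmless here, since they are orthogonal to $\sin(\omega x)$. Alternatively, one could use the symmetry $x\mapsto \ell-x$, or the half-period shift $x\mapsto x+\ell/2$, under which $\mathcal F$ is equivariant and $\sin(\omega\cdot)$ is odd. This gives $k(-s)=k(s)$ by uniqueness of the local branch, which also yields $k'(0)=0$ and could serve as a cleaner route.
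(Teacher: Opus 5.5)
Your proposal is correct and follows the paper's proof: the same Crandall--Rabinowitz formula for $k'(0)$, with the numerator vanishing because every second-order term carries only the frequencies $0$ and $2\omega$, while the cokernel element is proportional to $\sin(\omega\cdot)$. The one tactical difference is that the paper never solves for $\ddot\varphi_{\xi_0,\zeta_0}$; it integrates it by parts against $\dot\varphi_{\Phi_0,\Psi_0}$ (via the third line of \eqref{systemsatisfiedby_alphabetagamma}), reducing everything to $\fint_0^\ell \sin^3(\omega x)=\fint_0^\ell\sin(\omega x)=0$, whereas you read off the Fourier support of $\ddot\varphi$ directly; your symmetry alternative ($k(-s)=k(s)$ via $x\mapsto \ell-x$, using a symmetry-invariant complement in the Crandall--Rabinowitz decomposition) is also valid and avoids computing the second differential explicitly.
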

    \begin{proof}[Proof of Proposition \ref{Prop:characterizationofbifurcationbranch}]
       Let $k\in (1;k^*)$, and let $X_3$ be the largest positive root of the polynomial $P_k$,  $\ell:=\frac{2\pi}{\sqrt{X_3}}$, $\Omega=(0;\ell)$. Let $\xi_0, \zeta_0 \in \ker \left(d_{\tilde\theta,\tilde\eta}\mathcal{F}(k,0,0)\right)\setminus \left\{0\right\}$ and $\Phi_0, \Psi_0 \in \left( \Im\left(d_{\tilde\theta,\tilde\eta}\mathcal{F}(k,0,0)\right)\right)^\perp$, the following characterizations hold
       \begin{equation}\label{k'(0)}
           k'(0)= -\frac{\left \langle \begin{pmatrix} \Phi_0\\ \Psi_0 \end{pmatrix},\  d^2_{\tilde\theta,\tilde\eta}\mathcal{F}(k,0,0)[\xi_0^2,\zeta_0^2] \right\rangle }{2 \left \langle \begin{pmatrix} \Phi_0\\ \Psi_0 \end{pmatrix},\  \partial_{k}d_{\tilde\theta,\tilde\eta}\mathcal{F}(k,0,0)[\xi_0,\zeta_0] \right\rangle}.
       \end{equation}
       
       Let us observe that the previous quantities are well-defined since, by the transversality condition in Section \ref{Se:Transversality}, we have
       \[ \left \langle \begin{pmatrix} \Phi_0\\ \Psi_0 \end{pmatrix},\ \partial_k d_{\tilde\theta,\tilde\eta}\mathcal{F}(k,0,0)[\xi_0,\zeta_0] \right\rangle \neq 0. \]
       We would like to prove that \[d^2_{\tilde\theta,\tilde\eta}\mathcal{F}(k,0,0)[\xi_0^2,\zeta_0^2] \in \Im\left( d_{\tilde\theta,\tilde\eta}\mathcal{F}(k,0,0)\right).\]
       Differentiating \eqref{dF[xi,zeta]}, we have
       \begin{equation}\label{d2F[xi2,zeta2]}d^2_{\tilde\theta,\tilde\eta}\mathcal{F}(k,0,0)[\xi_0^2,\zeta_0^2] = \begin{pmatrix} 2\xi_0 (2\dot\varphi_{\xi_0,\zeta_0} + \xi_0) + 2(k-1) (\ddot\varphi_{\xi_0,\zeta_0} +\dot\varphi_{\xi_0,\zeta_0}^2)\\ 2\zeta_0 (2\dot\varphi_{\xi_0,\zeta_0}+2\xi_0) + 2\frac{2-k}{k-1}(\ddot\varphi_{\xi_0,\zeta_0} +\dot\varphi_{\xi_0,\zeta_0}^2)\end{pmatrix},
       \end{equation}
       where, by \eqref{Eq:DotPhi}, $\ddot\varphi_{\xi_0,\zeta_0}$ solves
       \begin{equation}\label{equationforddotvarphi0} 
            -2\nu^2\Delta \ddot\varphi_{\xi_0,\zeta_0}+2\xi_0\zeta_0+2\xi_0\dot\varphi_{\xi_0,\zeta_0} \frac{2-k}{k-1}+2(k-1)\zeta_0\dot\varphi_{\xi_0,\zeta_0}=-\ddot{\bar\lambda}_{\xi_0,\zeta_0}, \ \fint_\Omega (\dot\varphi_{\xi_0,\zeta_0}^2+\ddot\varphi_{\xi_0,\zeta_0})=0.
        \end{equation}
        Using the equations satisfied by $\dot\p_{\Phi_0,\Psi_0}$ and $\ddot\varphi_{\xi_0,\zeta_0}$ we obtain
       \begin{small}\begin{align*}
           &\left \langle \begin{pmatrix} \Phi_0\\ \Psi_0 \end{pmatrix},\,  d^2_{\tilde\theta,\tilde\eta}\mathcal{F}(k,0,0)[\xi_0^2,\zeta_0^2] \right\rangle =\\
           &=\fint_0^\ell \left\{\Phi_0 \left[ 2\xi_0 (2\dot\varphi_{\xi_0,\zeta_0} + \xi_0) + 2(k-1) (\ddot\varphi_{\xi_0,\zeta_0} +\dot\varphi_{\xi_0,\zeta_0}^2)\right] + \Psi_0 \left[ 2\zeta_0 (2\dot\varphi_{\xi_0,\zeta_0}+2\xi_0) + 2\frac{2-k}{k-1}(\ddot\varphi_{\xi_0,\zeta_0} +\dot\varphi_{\xi_0,\zeta_0}^2)\right]\right\} \\
           &=\fint_0^\ell \left\{\Phi_0 \left[2\xi_0 (2\dot\varphi_{\xi_0,\zeta_0} + \xi_0) + 2(k-1)\dot\varphi_{\xi_0,\zeta_0}^2\right] + \Psi_0 \left[2\zeta_0 (2\dot\varphi_{\xi_0,\zeta_0}+2\xi_0) + 2\frac{2-k}{k-1} \dot\varphi_{\xi_0,\zeta_0}^2\right]\right\}+\\
           & \qquad +\fint_0^\ell \left[ 2(k-1)\Phi_0\ddot\varphi_{\xi_0,\zeta_0} + 2\frac{2-k}{k-1}\Psi_0 \ddot\varphi_{\xi_0,\zeta_0} \right]\\
           &= \fint_0^\ell \left\{\Phi_0 \left[2\xi_0 (2\dot\varphi_{\xi_0,\zeta_0} + \xi_0) +2 (k-1)\dot\varphi_{\xi_0,\zeta_0}^2\right]+ \Psi_0 \left[2\zeta_0 (2\dot\varphi_{\xi_0,\zeta_0}+2\xi_0) + 2\frac{2-k}{k-1} \dot\varphi_{\xi_0,\zeta_0}^2\right]\right\} +\fint_0^\ell 2\nu^2 \ddot\varphi_{\xi_0,\zeta_0} \Delta \dot\p_{\Phi_0,\Psi_0} \\
           &= \fint_0^\ell \left\{\Phi_0 \left[2\xi_0 (2\dot\varphi_{\xi_0,\zeta_0} + \xi_0) +2 (k-1)\dot\varphi_{\xi_0,\zeta_0}^2\right]+ \Psi_0 \left[2\zeta_0 (2\dot\varphi_{\xi_0,\zeta_0}+2\xi_0) + 2\frac{2-k}{k-1} \dot\varphi_{\xi_0,\zeta_0}^2\right]\right\} + \\
           & \qquad + \fint_0^\ell \left( 2\xi_0\zeta_0 +2\frac{2-k}{k-1} \xi_0\dot\varphi_{\xi_0,\zeta_0}+2(k-1)\zeta_0\dot\varphi_{\xi_0,\zeta_0} +\ddot{\bar\lambda}_{\xi_0,\zeta_0}\right)\dot\p_{\Phi_0,\Psi_0}\\
           &= \fint_0^\ell \left\{\Phi_0 \left[2\xi_0 (2\dot\varphi_{\xi_0,\zeta_0} + \xi_0) +2 (k-1)\dot\varphi_{\xi_0,\zeta_0}^2\right]+ \Psi_0 \left[2\zeta_0 (2\dot\varphi_{\xi_0,\zeta_0}+2\xi_0) + 2\frac{2-k}{k-1} \dot\varphi_{\xi_0,\zeta_0}^2\right] \right\} +\\
           &\qquad + \fint_0^\ell \dot\p_{\Phi_0,\Psi_0} \left[2\xi_0\zeta_0 +2\frac{2-k}{k-1} \dot\p_{\Phi_0,\Psi_0}\dot\varphi_{\xi_0,\zeta_0}+2(k-1)\zeta_0\dot\varphi_{\xi_0,\zeta_0} +\ddot{\bar\lambda}_{\xi_0,\zeta_0}\right].
       \end{align*}\end{small}
       We know that $(\xi_0,\zeta_0,\dot\varphi_{\xi_0,\zeta_0})=(a,b,c)\sin(\omega \cdot)$, where $(a,b,c) \neq (0,0,0)$ solves \eqref{systemsatisfiedby_abc} and $(\Phi_0,\Psi_0,\dot\p_{\Phi_0,\Psi_0})=(\alpha,\beta,\gamma) \sin(\omega \cdot)$ solves \eqref{systemsatisfiedby_alphabetagamma}. Then, we obtain
       \begin{small}\begin{align*}
           &\left \langle \begin{pmatrix} \Phi_0\\ \Psi_0 \end{pmatrix},\,  d^2_{\tilde\theta,\tilde\eta}\mathcal{F}(k,0,0)[\xi_0^2,\zeta_0^2] \right\rangle =\\
           &= \fint_0^\ell \left\{\Phi_0 \left[2\xi_0 (2\dot\varphi_{\xi_0,\zeta_0}+ \xi_0) +2 (k-1)\dot\varphi_{\xi_0,\zeta_0}^2\right]+ \Psi_0 \left[2\zeta_0 (2\dot\varphi_{\xi_0,\zeta_0}+2\xi_0) + 2\frac{2-k}{k-1} \dot\varphi_{\xi_0,\zeta_0}^2\right] \right\}+ \\
           &\qquad + \fint_0^\ell\dot\p_{\Phi_0,\Psi_0} \left[2\xi_0\zeta_0 +2\frac{2-k}{k-1} \xi_0\dot\varphi_{\xi_0,\zeta_0}+2(k-1)\zeta_0\dot\varphi_{\xi_0,\zeta_0}\right]+ \ddot{\bar\lambda}_{\xi_0,\zeta_0}\fint_0^\ell \dot\p_{\Phi_0,\Psi_0}\\
           &= \left\{ \alpha \left[ 2a(2c+a)+2(k-1)c^2\right]+\beta\left[2b(2c+2a)+2\frac{2-k}{k-1}c^2\right]+\gamma \left[ 2ab +2 \frac{2-k}{k-1}ac +2(k-1)bc\right]\right\}\fint_0^\ell \sin^3(\omega x) + \\
           &\qquad + \ddot{\bar\lambda}_{\xi_0,\zeta_0} \gamma \fint_0^\ell\sin(\omega x)\\
           &=0,
       \end{align*}\end{small}
       since $\fint_0^\ell \sin(\omega x)=\fint_0^\ell\sin^3(\omega x)=0$. Hence, by \eqref{k'(0)}, we can conclude that $k'(0)=0$.
   \end{proof}
 
 \section{Uniqueness results: proof of Theorem \ref{Th:Uniqueness}}\label{Sec:ProofTh.Uniqueness}
As we mentioned, it is standard that the Lasry-Lions monotonicity conditions \eqref{static_monotonicity} implies uniqueness of solutions of \eqref{EMFC}, and the proof of Theorem \ref{Th:Uniqueness} relies on the identification of regimes where this condition is satisfied. Keeping in line with \cite{zbMATH08109759}, we establish this monotonicity condition by a fine study of $\bar\theta$ and $\bar \eta$ but, unlike \cite{zbMATH08109759}, in Theorem \ref{Th:Uniqueness}, we provide explicit conditions that $(\ell,k,d,\nu)$ must satisfy to guarantee uniqueness. This section is organised as follows;
\begin{enumerate}
\item In Section \ref{SubSec:generaluniqueness}, we give sufficient analytic conditions on solutions of \eqref{EMFC} to guarantee uniqueness of solutions.
\item In Section \ref{SubSec:PTU}, we use this condition to establish Theorem \ref{Th:Uniqueness}. Let us observe that this condition holds for any $k$ large enough.
\end{enumerate}
 
\subsection{A sufficient condition for uniqueness}\label{SubSec:generaluniqueness}
    The following result  gives a general, although at first glance hard to check, sufficient condition for uniqueness:  \begin{theorem}\label{Th:generaluniqueness} Let $\mathscr S$ denote the set of solutions of \eqref{EMFC}. If, for any $(\bar\theta,\bar\eta,\bar\lambda,\bar\p)\in \mathscr S$ there holds        \begin{equation}\label{keyconditionasinBHL}
  \forall x \in \O,\        d\frac{\abs{\nabla \left( \frac{\bar \theta}{1-\bar \eta}\right)}^2}{\left( \frac{\bar \theta}{1-\bar \eta}\right)^3} (x)  \le 4        \end{equation}
        then the Lasry-Lions monotonicity condition
    \begin{equation}\label{static_monotonicity}\tag{$LL_S$}
        \int_\Omega \left[\bar \theta_1(1-\bar \eta_1)-\bar \theta_2 (1-\bar\eta_2)\right] (\bar\p_1^2 -\bar\p_2^2) \le 0,
    \end{equation} where $(\bar\theta_i,\bar\eta_i,\bar\lambda_i,\bar\p_i)\in \mathscr S,\, i=1,2$
    is satisfied, and so \eqref{EMFC} has a unique solution.
    \end{theorem}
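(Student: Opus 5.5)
The plan is to follow the usual Lasry–Lions scheme in two steps. First I derive \eqref{static_monotonicity} from \eqref{keyconditionasinBHL}. Then I deduce uniqueness by the classical cross-testing argument.

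\textbf{Uniqueness from \eqref{static_monotonicity}.} Return to the $(\bar u,\bar m)$ formulation \eqref{Eq:EMFC}, with $F_i:=\bar\theta_i(1-\bar\eta_i)$. Subtract the two Hamilton–Jacobi equations and test the difference against $\bar m_1-\bar m_2$. Subtract the two Fokker–Planck equations and test the difference against $\bar u_1-\bar u_2$. The constants $\bar\lambda_i$ disappear because $\fint_\O(\bar m_1-\bar m_2)=0$. Combining the two identities gives the standard relation
\[
\tfrac12\int_\O(\bar m_1+\bar m_2)|\n\bar u_1-\n\bar u_2|^2=-\int_\O(F_1-F_2)(\bar m_1-\bar m_2)\ \ge 0,
\]
where the sign of the right-hand side is exactly \eqref{static_monotonicity}. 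The left-hand side is nonnegative, and the standard sign bookkeeping for the quadratic Hamiltonian makes the two sides have opposite signs. Both must therefore vanish. Since $\bar m_i=\bar\p_i^2>0$, this gives $\n\bar u_1=\n\bar u_2$, hence $\bar u_1=\bar u_2$ by the normalisation $\fint_\O\bar u_i=0$. It follows that $\bar m_1=\bar m_2$. By \cite{BHR}, $\bar\theta$ is then uniquely determined. The $\bar\eta$-equation is linear with invertible operator, since $\lambda_1[-d\Delta-(k-\bar m-2\bar\theta)]>\lambda_1[-d\Delta-(k-\bar m-\bar\theta)]=0$. So $\bar\eta$ is determined too, and finally $\bar\lambda$ is fixed.

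\textbf{Monotonicity via concavity.} I view $F[m]=\bar\theta_m(1-\bar\eta_m)$ as the first variation of $J(m):=\int_\O\theta_m m$. Here $\eta_m$ plays the role of the adjoint state, which is precisely how the $\bar\eta$-equation arises in Appendix \ref{Ap:Derivation}. Let $m_t=(1-t)m_1+tm_2$, $\mu=m_2-m_1$ and $g(t)=J(m_t)$. Then $g'(t)=\int_\O F[m_t]\mu$, so
\[
\int_\O(F_1-F_2)(m_1-m_2)=g'(1)-g'(0)=\int_0^1 g''(t)\,dt .
\]
It therefore suffices to show $g''\le0$.

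To compute $g''$, differentiate the $\theta$-equation twice. Write $\theta'=\partial_t\theta_{m_t}$; it solves
\[
-d\Delta\theta'-\theta'(k-m-2\theta)=-\theta\mu .
\]
The second derivative $\theta''$ solves the same linear problem with right-hand side $-2\theta'\mu-2\theta'^2$. Testing the $\theta''$-equation against $\eta$ gives
\[
g''=2\int_\O(1-\eta)\theta'\mu-2\int_\O\eta\,\theta'^2 .
\]
Next I eliminate $\mu$ using the $\theta'$-equation, and replace $k-m-\theta$ by $-d\Delta\theta/\theta$. I then substitute $\theta'=\theta z$ and set $w:=\theta/(1-\eta)$. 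One integration by parts should yield
\[
g''=2\int_\O\theta^2\Big(-d\frac{|\n z|^2}{w}+d\,\frac{z\,\n z\cdot\n w}{w^2}-z^2\Big).
\]
The integrand is a quadratic form in $(\n z,z)$. It is nonpositive pointwise as soon as its discriminant satisfies $d^2|\n w|^2/w^4\le 4d/w$, that is, $d|\n w|^2/w^3\le4$. This is exactly \eqref{keyconditionasinBHL}. The argument also requires $1-\eta>0$, so that $w>0$; this is implicit in the statement, and I would check that it holds separately.

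\textbf{Main obstacle.} The difficulty is that \eqref{keyconditionasinBHL} is assumed only on the solution set $\mathscr S$, whereas the concavity argument uses it along the whole segment $m_t$. I would try two remedies. The first is to replace the integrated second derivative by an exact finite-difference identity between two solutions. The functions $\theta_2-\theta_1$ and $\eta_2-\eta_1$ satisfy linear equations whose coefficients involve only the endpoints. The aim is to reproduce the same quadratic-form structure with weights $w_1$ and $w_2$, and then apply Young's inequality with \eqref{keyconditionasinBHL} at each endpoint. The second, if the first fails, is to interpret the hypothesis as holding for all admissible $m$, which is the form it takes when it is later verified through the Bai–He–Li estimates \cite{BHL2015}. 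I expect the first route, namely making the endpoint algebra close, to be the delicate step.
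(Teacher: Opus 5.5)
Your concavity argument is the paper's proof. The paper's $A(\tau)$ is your $g'(\tau)-g'(0)$ on the same segment, up to orientation. The paper derives the same adjoint identity $\tfrac12 A'=\int_\Omega h\dot\theta(1-\eta)-\int_\Omega\dot\theta^2\eta$. It then writes $\dot\theta/\sqrt p$ with $p=\theta/(1-\eta)$ and discards $d\int_\Omega|\nabla(\dot\theta/\sqrt p)|^2-\int_\Omega(k-\mu-\theta)(\dot\theta/\sqrt p)^2$, which is nonnegative because $\theta$ is a principal eigenfunction with eigenvalue $0$. Your exact formula for $g''$ is correct. The square you complete in the discriminant step is $d\,\theta^2|\nabla(z/\sqrt w)|^2$, which is exactly that discarded term. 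So both routes arrive at $\tfrac12 g''\le-\int_\Omega\theta'^2\bigl(1-\tfrac d4|\nabla w|^2/w^3\bigr)$.

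There are two corrections.

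\textbf{Sign of the Lasry--Lions identity.} Subtracting the cross-tested equations actually gives
\[
\tfrac12\int_\Omega(\bar m_1+\bar m_2)|\nabla\bar u_1-\nabla\bar u_2|^2=\int_\Omega(F_1-F_2)(\bar m_1-\bar m_2).
\]
Only with this sign does \eqref{static_monotonicity} force both sides to vanish. With your minus sign, both sides are nonnegative under \eqref{static_monotonicity} and nothing follows; your sentence about ``opposite signs'' contradicts your own display. The rest of your uniqueness step is fine.

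\textbf{The obstacle you flag.} It is genuine, but the paper's proof has it too. The paper applies \eqref{keyconditionasinBHL} to $p=\bar\theta_{\bar\varphi_\tau^2}/(1-\bar\eta_{\bar\varphi_\tau^2})$ for every $\tau\in[0,1]$, and tacitly assumes $1-\bar\eta_{\bar\varphi_\tau^2}>0$ there. In other words, it silently adopts your second remedy and never attempts an endpoint identity. You can therefore drop the first remedy. State the hypothesis for every density $\tau\bar\varphi_1^2+(1-\tau)\bar\varphi_2^2$ built from two solutions, with $\bar\theta_\mu,\bar\eta_\mu$ in place of $\bar\theta,\bar\eta$. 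This is also what the proof of Proposition \ref{Prop:Intermediate} actually delivers. The estimates in its final step involve $\bar\varphi^2$ only through $\fint\bar\varphi^2=1$ and $\|\bar\varphi^2\|_{L^\infty}$; the eigenvalue equation is used only to bound the latter. Both quantities are preserved under convex combinations.
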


    \begin{proof}[Proof of Theorem \ref{Th:generaluniqueness}]
        Let $(\bar\theta_i,\bar\eta_i,\bar\lambda_i,\bar\p_i)\in \mathscr S,\, i=1,2$  and define, for any $\tau \in [0;1]$,    \[ \bar\p^2_\tau = \tau \bar\p^2_1 +(1-\tau)\bar\p^2_2.\]
    Consider the map
    \[ A:[0;1]\ni\tau\mapsto \int_\Omega \left[ \bar \theta_{\bar\p^2_\tau}(1-\bar \eta_{\bar\p^2_\tau}) - \bar \theta_2 (1-\bar \eta_2)\right] (\bar\p^2_1 - \bar\p^2_2)\] where, for any $\tau \in [0;1]$ and any probability measure $\mu\in L^2(\O)$, $\bar\theta_{\mu}$ denotes the unique solution of
    \[\begin{cases}-d\Delta\bar\theta_\mu-\bar\theta_\mu(k-\mu-\bar\theta_\mu)=0&\text{ in }\O,\, 
    \\ \partial_\nu \bar\theta_\mu=0&\text{ on }\partial \O,\, 
    \\ \bar\theta_\mu>0&\text{ in }\O.\end{cases}\]
Observe that $A(0)=0$ and that \eqref{static_monotonicity} is equivalent to $A(1)\leq 0$, so that our strategy amounts to showing that \eqref{keyconditionasinBHL} implies
\begin{equation}\label{Eq:Adecreasing}
A'\leq 0\text{ in }[0;1].
\end{equation} Let, for any probability measure $\mu\in L^2(\O)$ and any $h\in L^2(\O)$, $(\dot{\bar\theta}_\mu[h],\dot{\bar\eta}_\mu[h])$ denote the Gateaux derivative of $\mu\mapsto(\bar\theta_\mu,\bar\eta_\mu)$ at $\mu$ in the direction $h$ or, in other words, the unique solution of 
\begin{equation}\label{Eq:DotTheta}
\begin{cases}
-d\Delta \dot{\bar\theta}_\mu[h]-  \dot{\bar\theta}_\mu[h]\left(k-\mu-2 \bar\theta_{\mu}\right)=-h\bar\theta_\mu&\text{ in }\O,\, 
\\ -d\Delta  \dot{\bar\eta}_\mu[h]- \dot{\bar\eta}_\mu[h]\left(k-\mu-2 \bar\theta_\mu\right)=h(1-\bar\eta_\mu)-2\dot{\bar\theta}_\mu\bar\eta_\mu&\text{ in }\O,\, 
\\ \partial_\nu\dot{\bar\theta}_\mu[h]= \partial_\nu\dot{\bar\eta}_\mu[h]=0&\text{ on }\partial \O.
\end{cases}\end{equation}
We deduce that, for any $\tau\in [0;1]$, 
\[ A'(\tau)=\int_\O \left(\dot{\bar\theta}_{\bar\p_\tau^2}[\bar\p_1^2-\bar\p_2^2](1-\bar\eta_{\bar\p_\tau^2})-\dot{\bar\eta}_{\bar\p_\tau^2}[\bar\p_1^2-\bar\p_2^2]\bar\theta_{\bar\p_\tau^2}\right) \left(\bar\p_1^2 -\bar\p_2^2\right)
\] or, introducing $h:=\bar\p_1^2-\bar\p_2^2$ and setting for notational convenience $\dot{\bar\theta}:=\dot{\bar\theta}_{\bar\p_\tau^2}[\bar\p_1^2-\bar\p_2^2]$, $\dot{\bar\eta}:=\dot{\bar\eta}_{\bar\p_\tau^2}[\bar\p_1^2-\bar\p_2^2]$,
\begin{equation}\label{Eq:Aprime}
A'(\tau)=\int_\O  \left( \dot{\bar\theta} (1-\bar\eta) -\bar\theta \dot{\bar\eta}\right) h.
\end{equation}
Now, observe that from \eqref{Eq:DotTheta} there holds
\[ -\int_\O h\bar\theta_{\bar\p_\tau^2}\dot{ \bar\eta}=\int_\O h\dot{\bar\theta}(1-\bar\eta_{\bar\p_\tau^2})-2\int_\O \dot{\bar\theta}^2\bar\eta_{\bar\p_\tau^2}\leq \int_\O h\dot{\bar\theta}(1-\bar\eta_{\bar\p_\tau^2})
\]
whence 
\begin{equation}\label{Eq:DotA}
\frac12A'(\tau)= \int_\O h\dot{\bar\theta}(1-\bar\eta_{\bar\p_\tau^2})-\int_\O \dot{\bar\theta}^2\bar\eta_{\bar\p_\tau^2}.
\end{equation} Set \[ p:=\frac{\bar\theta_{\bar\p_\tau^2}}{1-\bar\eta_{\bar\p_\tau^2}}\] so that
\[ -\int_\O h\dot{\bar\theta}(1-\bar\eta_{\bar\p_\tau^2})=-\int_\O (h\bar\theta_{\bar\p_\tau^2})\frac{\dot{\bar\theta}}{p}\]
Now, write 
\[ -h\bar\theta_{\bar\p_\tau^2}={-d\Delta\dot{\bar\theta}-\dot{\bar\theta}(k-\bar\p_\tau^2-2\bar\theta_{\bar\p_\tau^2})}\] so that, setting 
\[ W:=k-\bar\p_\tau^2-2\bar\theta_{\bar\p_\tau^2},\] we derive 
\begin{align*}
-\int_\O h\dot{\bar\theta}(1-\bar\eta_{\bar\p_\tau^2})&=-d\int_\O \frac{\dot{\bar\theta}}{p}\Delta\dot{\bar\theta}-\int_\O \frac{\dot{\bar\theta}^2}{p}W
\\&=-d\int_\O \frac{\dot{\bar\theta}}{\sqrt{p}}\cdot\frac{\Delta\dot{\bar\theta}}{\sqrt{p}}-\int_\O\left( \frac{\dot{\bar\theta}}{\sqrt{p}}\right)^2W.
\end{align*}
As 
\[ \Delta\left(\frac{\dot{\bar\theta}}{\sqrt{p}}\right)=\frac{\Delta\dot{\bar\theta}}{\sqrt{p}}+\Delta\left(\frac1{\sqrt{p}}\right)\dot{\bar\theta}+2\left\langle \n\dot{\bar\theta},\n\left(\frac1{\sqrt{p}}\right)\right\rangle\] we deduce 
\begin{align*}-d\int_\O \frac{\dot{\bar\theta}}{\sqrt{p}}\cdot\frac{\Delta\dot{\bar\theta}}{\sqrt{p}}&=d\int_\O\left|\n\left(\frac{\dot{\bar\theta}}{\sqrt{p}}\right)\right|^2
\\&+d\int_\O\left\{ \frac{\dot{\bar\theta}^2}{\sqrt{p}}\Delta\left(\frac1{\sqrt{p}}\right)+2\frac{\dot{\bar\theta}}{\sqrt{p}}\left\langle \n\dot{\bar\theta},\n\left(\frac1{\sqrt{p}}\right)\right\rangle\right\}
\\&=d\int_\O\left\{\left|\n\left(\frac{\dot{\bar\theta}}{\sqrt{p}}\right)\right|^2+d\n\cdot\left(\frac{\dot{\bar\theta}^2}{\sqrt{p}}\n\left(\frac1{\sqrt{p}}\right)\right)-d\dot{\bar\theta}^2\left|\n\left(\frac1{\sqrt{p}}\right)\right|^2\right\}
\\&=d\int_\O \left|\n\left(\frac{\dot{\bar\theta}}{\sqrt{p}}\right)\right|^2-d\int_\O \dot{\bar\theta}^2\left|\n\left(\frac1{\sqrt{p}}\right)\right|^2.
\end{align*}
Consequently, 
\begin{align*}
-\int_\O h\dot{\bar\theta}(1-\bar\eta_{\bar\p_\tau^2})+\int_\O \dot{\bar\theta}^2\bar\eta_{\bar\p_\tau^2}&=-d\int_\O \dot{\bar\theta}^2\left|\n\left(\frac1{\sqrt{p}}\right)\right|^2
\\&+d\int_\O \left|\n\left(\frac{\dot{\bar\theta}}{\sqrt{p}}\right)\right|^2-\int_\O W\left( \frac{\dot{\bar\theta}}{\sqrt{p}}\right)^2+\int_\O \dot{\bar\theta}^2\bar\eta_{\bar\p_\tau^2}
\\&=-d\int_\O \dot{\bar\theta}^2\left|\n\left(\frac1{\sqrt{p}}\right)\right|^2+\int_\O \bar\theta_{\bar\p_\tau^2}\left( \frac{\dot{\bar\theta}}{\sqrt{p}}\right)^2+\int_\O \dot{\bar\theta}^2\bar\eta_{\bar\p_\tau^2}
\\&+d\int_\O \left|\n\left(\frac{\dot{\bar\theta}}{\sqrt{p}}\right)\right|^2-\int_\O (k-\bar\p_\tau^2-\bar\theta_{\bar\p_\tau^2})\left( \frac{\dot{\bar\theta}}{\sqrt{p}}\right)^2
\\&\geq -d\int_\O \dot{\bar\theta}^2\left|\n\left(\frac1{\sqrt{p}}\right)\right|^2+\int_\O \bar\theta_{\bar\p_\tau^2}\left( \frac{\dot{\bar\theta}}{\sqrt{p}}\right)^2+\int_\O \dot{\bar\theta}^2\bar\eta_{\bar\p_\tau^2}
\\&= \int_\O \dot{\bar\theta}^2\left(1-d\left|\n\left(\frac1{\sqrt{p}}\right)\right|^2\right)
\\&=\int_\O \dot{\bar\theta}^2\left(1-\frac{d}4\cdot\frac{|\n p|^2}{p^3} \right),
\end{align*}
and so 
\[ \frac12A'(\tau)\leq -\int_\O \dot{\bar\theta}^2\left(1-\frac{d}4\cdot\frac{|\n p|^2}{p^3} \right).\] The conclusion follows.
     \end{proof}

\subsection{Proof of Theorem \ref{Th:Uniqueness}}\label{SubSec:PTU}
Theorem \ref{Th:Uniqueness} follows from the following proposition:
\begin{proposition}\label{Prop:Intermediate}
    For any $C>3$ assume $k\ge \max\left\{k_2,k_3\right\}$, with $k_2$ defined as in \eqref{condition_C_K} and 
    {\begin{equation}\label{assumptionfor_theta/1-eta_XiBaiLi}
    k_3:=C \sqrt{A(C)} \left(\frac{3\ell}{2d} \frac{3C-2}{(C-2)^2(\sqrt{6}-1)^2} + \frac{C}{C-2}\sqrt{A(C)}\right),  
\end{equation}
where $A(C):=1+\frac{\ell^2(3C-4)}{\nu^2(C-2)} +\frac{\ell}{\nu^2}\sqrt{\frac{\ell^2(3C-4)^2}{\nu^2(C-2)^2}+8\frac{C-1}{C-2}}$.} Then \begin{equation*}
    \begin{cases}
        1-\bar\eta>0&\text{ in } (0;\ell),\, 
 \\             d\frac{\left|{\left( \frac{\bar \theta}{1-\bar \eta}\right)'}\right|^2}{\left( \frac{\bar \theta}{1-\bar \eta}\right)^3}<4 &\text{ in }(0;\ell) 
    \end{cases}
\end{equation*}
holds, so that \eqref{EMFC} has a unique solution.\\
\end{proposition}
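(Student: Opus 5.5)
The plan is to verify the two pointwise conditions for every solution $(\bar\theta,\bar\eta,\bar\lambda,\bar\p)\in\mathscr S$ and then invoke Theorem \ref{Th:generaluniqueness}. I will not work with the solution set directly but with a priori estimates depending only on $(k,\ell,d,\nu)$.

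\textbf{Step 1: bounds on $\bar m=\bar\p^2$.} Since $\bar\theta(1-\bar\eta)$ is the potential in the $\bar\p$-equation, I first need $L^\infty$ bounds on it. The goal is a two-sided estimate $\|\bar\p^2\|_\infty\le A(C)$ that is independent of $k$. I will prove it in one dimension by a Harnack-type argument on the eigenvalue equation $-2\nu^2\bar\p''=(\bar\theta(1-\bar\eta)-\bar\lambda)\bar\p$. Integrating the equation over $(0;\ell)$ with Neumann conditions, together with $\fint\bar\p^2=1$, controls $\int|\bar\p'|^2/\nu^2$ in terms of the oscillation of the potential. The $\ell/\nu^2$-dependence and the square-root formula for $A(C)$ then come out of a quadratic inequality for $\|\bar\p\|_\infty$, obtained from $\bar\p(x)^2\le \bar\p(y)^2+2\int|\bar\p\bar\p'|$.

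\textbf{Step 2: bounds on $\bar\theta$ and $\bar\eta$.} With $0\le\bar\p^2\le A(C)$, the maximum principle gives $k-A(C)\le\bar\theta\le k$. For $\bar\eta$, the comparison principle applied to $-d\bar\eta''+\bar\eta(2\bar\theta+\bar\p^2-k)=\bar\p^2$ shows $0<\bar\eta\le \sup\bar\p^2/\inf(2\bar\theta+\bar\p^2-k)$. The zero-order coefficient is at least $k-2A(C)$, so $\bar\eta\lesssim A(C)/(k-2A(C))$. This is where a condition $k\ge k_2$, as in \eqref{condition_C_K}, with $C$ encoding the ratio $k/A(C)$ (something like $\bar\eta\le 1/(C-2)$), should enter. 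In particular it yields $1-\bar\eta\ge (C-3)/(C-2)>0$, which is why $C>3$ appears.

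\textbf{Step 3: the gradient condition, following Bai--He--Li \cite{BHL2015}.} I expand $p'=\bar\theta'/(1-\bar\eta)+\bar\theta\bar\eta'/(1-\bar\eta)^2$ and bound $|\bar\theta'|$ and $|\bar\eta'|$. For $\bar\theta$, the one-dimensional estimate of \cite{BHL2015} (multiply by $\bar\theta'$ and integrate from a critical point, or use $d\bar\theta''=-\bar\theta(k-\bar\p^2-\bar\theta)$ with $|k-\bar\p^2-\bar\theta|\le A(C)$) gives $d|\bar\theta'|^2\lesssim \ell\,\bar\theta\, A(C)\cdot$(const), since $\bar\theta'$ vanishes somewhere in $(0;\ell)$. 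The same argument applies to $\bar\eta$. These are presumably the origin of the factors $\tfrac{3\ell}{2d}$ and $(\sqrt6-1)^{-2}$. Since $p\ge (k-A(C))\cdot$const, we have $d|p'|^2/p^3\lesssim (\text{bounds})/k$, and this is $<4$ once $k\ge k_3$. Rearranging this inequality should reproduce the expression \eqref{assumptionfor_theta/1-eta_XiBaiLi}.

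\textbf{Main obstacle.} The delicate point is Step 1 combined with the circularity between the steps. The bound on $\bar\p$ depends on the oscillation of $\bar\theta(1-\bar\eta)$, while the bounds on $\bar\theta$ and $\bar\eta$ depend on $\bar\p$. I would first try to close this loop using only oscillation quantities, which stay $O(1)$ in $k$ after subtracting constants. I would then fix $C$ as the self-consistent threshold and check the resulting inequality $A(C)$ for $\|\bar\p\|_\infty^2$ explicitly.
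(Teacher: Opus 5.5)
Your Steps 2 and 3 are essentially right once an $L^\infty$ bound on $\bar\p$ is available. The maximum-principle bounds $0\le k-\bar\theta\le\|\bar\p\|^2_{L^\infty}$ and $\bar\eta\le\frac{C}{k(C-2)}\|\bar\p\|^2_{L^\infty}\le\frac1{C-2}$ give $1-\bar\eta\ge\frac{C-3}{C-2}$, and the derivative bounds follow. The paper uses the Bai--He--Li estimate $d\bar\theta'^2/\bar\theta^3<\frac23$ and a Young inequality with optimised $\sigma$ instead of integrating from a critical point, but either works.

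The genuine gap is Step 1, i.e. the circularity you flag but do not resolve. The oscillation of $\bar\theta(1-\bar\eta)$ is not $O(1)$ a priori. You have $\mathrm{osc}\,\bar\theta\le\|\bar\p\|^2_{L^\infty}$, which is the unknown itself. Bounding $\bar\eta$, hence $\bar\theta\bar\eta$, by the maximum principle requires $2\bar\theta+\bar\p^2-k\ge k\frac{C-2}{C}>0$, i.e. it requires already knowing $\|\bar\p\|^2_{L^\infty}\le k/C$. So ``fixing $C$ as the self-consistent threshold'' only yields a dichotomy: either $\|\bar\p\|^2_{L^\infty}\le A(C)$ or $\|\bar\p\|^2_{L^\infty}>k/C$. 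Since the statement is a uniqueness result, you must exclude the second alternative for every solution, including possibly concentrated ones, and nothing in your plan does this.

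The missing idea is an a priori estimate that holds for all solutions without any assumption on $\bar\p$ and is sublinear in $k$; this is what starts the bootstrap. The paper proves it in Lemma \ref{Prop:lambda}. It inserts $u=\bar\theta/\sqrt{1+\bar\eta}$ into the Rayleigh quotient for $-\bar\lambda$ and uses:
\begin{itemize}
\item the identity $\int\bar\theta\bar\p^2=\int\bar\eta\bar\theta^2$, obtained by cross-testing the $\bar\theta$- and $\bar\eta$-equations;
\item $\fint\bar\theta\ge k-1$ and $\int(\bar\theta')^2\le\frac kd\int\bar\theta^2$;
\item an energy estimate for $\bar\eta$ tested against $\bar\theta^2/(1+\bar\eta)^2$.
\end{itemize}
This yields $-\bar\lambda\le M_{\nu,d}k$. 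The simpler test function $u=\bar\theta$ also gives $-\bar\lambda=O(k)$, via $\int\bar\theta^3\bar\eta\le k\int\bar\theta\bar\p^2\le k^2\ell$ and $\int\bar\theta^2\ge\ell(k-1)^2$.

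Since $\bar\eta\ge0$, one then gets $2\nu^2\fint(\bar\p')^2=\fint(\bar\theta(1-\bar\eta)-\bar\lambda)\bar\p^2\le(M_{\nu,d}+1)k$. Hence $\|\bar\p\|^2_{L^\infty}\le1+\frac{\sqrt2\ell}{\nu}\sqrt{(M_{\nu,d}+1)k}=O(\sqrt k)$. The hypothesis $k\ge k_2$, whose definition involves $M_{\nu,d}$, is exactly what converts this into $\|\bar\p\|^2_{L^\infty}\le k/C$.

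Only then does the bootstrap you sketch close and give the $k$-independent bound $\|\bar\p\|^2_{L^\infty}\le A(C)$. It uses the Rayleigh quotient with constant test function, $0\le k-\bar\theta\le\|\bar\p\|^2_{L^\infty}$, $\fint\bar\eta\le\frac{C}{k(C-2)}$, and $\|\bar\p\|^2_{L^\infty}\le1+2\sqrt\ell\|\bar\p'\|_{L^2}$.
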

{Note that $A'(C)<0$ and $ \frac{C(3C-2)}{(C-2)^2}<0$ for any $C>3$, while $\left(\frac{C^2}{C-2}\right)'=\frac{C(C-4)}{(C-2)^2}$ reaches its minimum at $C=4$. Then for $\frac{\ell}{d}, \frac{\ell}{\nu} \to 0^+$, $C_\text{opt}=4$, otherwise $C_\text{opt}=C^*_{\ell,d,\nu}$.}\\

The proof of Proposition   \ref{Prop:Intermediate} is lengthy and relies on fine estimates for $\bar\theta,\bar\eta$, similar to those derived by Bai, He \& Li \cite{BHL2015}. Throughout, $(\bar\theta,\bar\eta,\bar\lambda,\bar\p)$ denotes a solution of \eqref{EMFC}.

\begin{lemma}\label{Prop:lambda}
    Assume $k\ge \frac{3+\sqrt{5}}{2}$, then
    \begin{equation}\label{Eq:Ide4}
        -\bar\lambda \le M_{\nu,d}k, \quad {where} \quad M_{\nu,d}:= 2 \left[\frac{5\nu^2}{d}(d+1) +1\right].
    \end{equation}
\end{lemma}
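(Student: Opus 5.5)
The plan is to use the variational characterisation of the principal eigenvalue in the $\bar\p$-equation of \eqref{EMFC}. Since $-\bar\lambda$ is the principal eigenvalue of $-2\nu^2\Delta-\bar\theta(1-\bar\eta)$ with Neumann conditions,
\[-\bar\lambda=\min_{\fint_\O\psi^2=1}\Big(2\nu^2\fint_\O|\psi'|^2-\fint_\O\bar\theta(1-\bar\eta)\psi^2\Big).\]
Testing with a well-chosen $\psi$, first $\psi\equiv1$, gives
\[-\bar\lambda\le\fint_\O\bar\theta\bar\eta-\fint_\O\bar\theta .\]
The whole task then reduces to an upper bound $\fint_\O\bar\theta\bar\eta\lesssim M_{\nu,d}k$. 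For the $\bar\theta$ factor the maximum principle gives $0<\bar\theta\le k$.

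For $\bar\eta$ I would introduce the linearised operator
\[L:=-d\Delta-(k-\bar\p^2-2\bar\theta).\]
The key algebraic observation is $L\bar\theta=\bar\theta^2>0$, which follows directly from the $\bar\theta$-equation. So $\bar\theta$ is a positive strict supersolution, $L$ has positive principal eigenvalue, and $L$ satisfies the maximum principle. Since $L\bar\eta=\bar\p^2\ge0$, this gives $\bar\eta>0$. It also gives a comparison: if $\bar\p^2\le \Lambda$ and $\bar\theta\ge m_0>0$, then $L(\Lambda m_0^{-2}\bar\theta-\bar\eta)\ge0$, whence
\[\bar\eta\le \frac{\Lambda}{m_0^2}\bar\theta .\]
Moreover, self-adjointness of $L$ yields the identity $\fint\bar\theta^2\bar\eta=\fint\bar\theta\bar\p^2\le k$. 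This already controls $\bar\eta$ where $\bar\theta$ is not small, and together with Cauchy--Schwarz, $\fint\bar\theta\bar\eta\le(\fint\bar\theta^2\bar\eta)^{1/2}(\fint\bar\eta)^{1/2}$, it reduces matters to an $L^1$ bound on $\bar\eta$.

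That $L^1$ bound should come from integrating the $\bar\eta$-equation, $\fint\bar\eta(2\bar\theta+\bar\p^2-k)=1$, combined with the integrated $\bar\theta$-equation $\fint\bar\theta(k-\bar\p^2-\bar\theta)=0$. Both require a lower bound on $\bar\theta$, hence an upper bound on $\bar\p^2$ where $k-\bar\p^2$ could become small. Here I would use one-dimensional Harnack/Sobolev estimates. From the energy identity
\[2\nu^2\fint|\bar\p'|^2=-\bar\lambda+\fint\bar\theta(1-\bar\eta)\bar\p^2,\]
one gets $\|\bar\p^2\|_\infty\le 1+C\ell\|\bar\p'\|_2$. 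For $\bar\theta$, a Harnack-type estimate in 1D gives $\bar\theta\gtrsim k-\|\bar\p^2\|_\infty$ up to constants depending on $d$. These are the sources of the factors $\nu^2/d$ and $d+1$ in $M_{\nu,d}$. The estimates are mildly circular, since the bound on $\bar\p$ involves $\bar\lambda$. I would close them by absorbing terms, and this absorption is where the threshold $k\ge\frac{3+\sqrt5}{2}$, equivalent to $(k-1)^2\ge k$, should enter: it guarantees that the constant state $\bar\theta\approx k-1$ dominates $\sqrt{k}$-size error terms.

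The main obstacle is this closing step: obtaining a quantitative lower bound on $\bar\theta$ (equivalently, an upper bound on $\bar\p^2$) that is uniform over all solutions and linear in $k$. If the direct absorption fails, I would first try a different test function $\psi$ in the Rayleigh quotient, namely $\psi$ proportional to $\bar\theta^{1/2}$. This exploits $\fint\bar\theta^2\bar\eta\le k$ directly and avoids pointwise bounds on $\bar\eta$.
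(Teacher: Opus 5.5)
Your main line of argument has a genuine gap: it does not close. Testing with $\psi\equiv1$, the positivity $\bar\eta>0$, the identity $\int\bar\theta^2\bar\eta=\int\bar\theta\bar\p^2$ and the Cauchy--Schwarz reduction are all correct. The trouble is the $L^1$ bound on $\bar\eta$. Extracting it from $\fint\bar\eta\,(2\bar\theta+\bar\p^2-k)=1$ requires the weight $2\bar\theta+\bar\p^2-k$ to be bounded below pointwise. Through $\bar\theta\ge k-\|\bar\p^2\|_{L^\infty}$, this means $\|\bar\p^2\|_{L^\infty}<k/2$. The only control you have is $\|\bar\p^2\|_{L^\infty}\le 1+\frac{\sqrt2\,\ell}{\nu}\sqrt{k-\bar\lambda}$, which involves $\ell$ and the very quantity $-\bar\lambda$ you are trying to bound. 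A large $-\bar\lambda$ is compatible with $\bar\p^2$ exceeding $k/2$, where the weight may degenerate (heuristically it behaves like $|k-\bar\p^2|$ for small $d$). So there is nothing to absorb, and no a priori bound comes out. Even granting a bound on $-\bar\lambda$, this mechanism only works for $k$ large compared with $\ell^2/\nu^2$. That is the regime of the paper's Lemma \ref{Prop:1-bareta} (threshold $k_2$), whose proof uses the present lemma as input. It cannot give the statement, whose threshold $\frac{3+\sqrt5}{2}$ and constant $M_{\nu,d}$ are independent of $\ell$. The comparison $\bar\eta\le\Lambda m_0^{-2}\bar\theta$ fails for the same reason: $m_0=\min\bar\theta$ has no uniform lower bound.

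Your fallback $\psi=\bar\theta^{1/2}$ is the right idea. Completed, it is shorter than the paper's proof, which tests with $\bar\theta/\sqrt{1+\bar\eta}$ and then must control $\int\bar\theta^2(\bar\eta')^2(1+\bar\eta)^{-3}$ by testing the $\bar\eta$-equation against $\bar\theta^2/(1+\bar\eta)^2$. But you left out its only nontrivial step: the gradient term $2\nu^2\int|(\bar\theta^{1/2})'|^2=\frac{\nu^2}{2}\int\frac{(\bar\theta')^2}{\bar\theta}$. The bound $\int(\bar\theta')^2\le\frac kd\int\bar\theta^2$ does not control it, since $\bar\theta$ has no lower bound. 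The missing ingredient comes from dividing the $\bar\theta$-equation by $\bar\theta$ and integrating with Neumann conditions:
\[ d\int_0^\ell\frac{(\bar\theta')^2}{\bar\theta^2}=\int_0^\ell\bar\theta-(k-1)\ell . \]
This yields $\int_0^\ell\bar\theta\ge(k-1)\ell$. Since $\bar\theta\le k$, it also yields $\int_0^\ell\frac{(\bar\theta')^2}{\bar\theta}\le k\int_0^\ell\frac{(\bar\theta')^2}{\bar\theta^2}\le\frac{k\ell}{d}$. By \eqref{Eq:Ide1} the potential term equals $-\int\bar\theta^2+\int\bar\theta\bar\p^2$. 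Since $\int\bar\theta^2\ge\ell(k-1)^2$, this is at most $-\ell(k-1)^2+k\ell\le0$, and that is exactly where $(k-1)^2\ge k$, i.e.\ $k\ge\frac{3+\sqrt5}{2}$, is used, as in the paper. Therefore $-\bar\lambda\le\frac{\nu^2k}{2d(k-1)}\le M_{\nu,d}k$, without even using the sign of $\bar\eta$.
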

\begin{proof}[Proof of Lemma \ref{Prop:lambda}]
First of all, from 
\[\begin{cases}
    -d\theta'' -\theta (k-\bar\p^2-2\theta) = \theta^2,\\
    -d\eta'' -\eta (k-\bar\p^2-2\theta)=\p^2
\end{cases}\]
we deduce
\begin{equation}\label{Eq:Ide1}
 \int_0^\ell \theta \bar\p^2=\int_0^\ell \eta \theta^2.
 \end{equation}
 Second, observe that 
 \[ -d\theta''+\theta(\bar\p^2+\theta)=k\theta.\] Consequently, 
 \[ k=\min_{u\, ; \int_0^\ell u^2=1}d\int_0^\ell (u')^2+\int_0^\ell u^2(\bar\p^2+\theta),\] which implies 
 \[ k\leq \fint_0^\ell (\bar\p^2+\theta), \]
 whence 
 \[ \fint_0^\ell \theta\geq k-1.\] By the Cauchy-Schwarz inequality and since $k\ge \frac{3+\sqrt{5}}{2}$, we deduce  
 \begin{equation}\label{Eq:Ide2}
 \fint_0^\ell \theta^2\ge \left(\fint_0^\ell \theta\right)^2\ge (k-1)^2 \ge k.\end{equation}
 Third, note that 
 \[ d\int_0^\ell (\theta')^2=\int_0^\ell k\theta^2-\int_0^\ell(\bar\p^2+\theta)\theta^2\] whence 
 \begin{equation}\label{Eq:Ide3}
 \int_0^\ell (\theta')^2\le \frac{k}{d}\int_0^\ell \theta^2.
 \end{equation}
 Finally, let us show \eqref{Eq:Ide4}. To this end, observe that for any $u\in W^{1,2}((0;\ell))\setminus\{0\}$ there holds 
 \begin{equation}\label{Eq:Rayleigh} -\lambda\leq \frac{2\nu^2\int_0^\ell (u')^2-\int_0^\ell u^2\theta(1-\eta)}{\int_0^\ell u^2}.\end{equation}
 Let $u:=\frac{\theta}{\sqrt{1+\eta}}$. 
Note that 
\[\int_0^\ell u^2=\int_0^\ell \frac{\theta^2}{1+\eta}.\]  Now, note that 
\begin{align*}
 \int_0^\ell \theta^2
&\le \left(\int_0^\ell \theta^2(1+\eta)\right)^{\frac12}\left(\int_0^\ell \frac{\theta^2}{1+\eta}\right)^{\frac12}
\\&\le \left(\int_0^\ell \theta^2+\int_0^\ell \theta^2\eta\right)^{\frac12}\left(\int_0^\ell \frac{\theta^2}{1+\eta}\right)^{\frac12}.
\end{align*}
From \eqref{Eq:Ide1}
\[ \int_0^\ell \theta^2\eta=\int_0^\ell \bar\p^2\theta {\leq}\ell k\le  \int_0^\ell \theta^2.\]
Thus
\begin{align*}
\int_0^\ell \theta^2&\le \sqrt{2}\left(\int_0^\ell \theta^2\right)^{\frac12}\left(\int_0^\ell \frac{\theta^2}{1+\eta}\right)^{\frac12}
\\&=\sqrt{2}\Vert \theta\Vert_{L^2} \Vert u\Vert_{L^2}
\end{align*} and thus
\begin{equation}\label{Eq:uL2}
\int_0^\ell u^2\ge \frac{1}{2}\int_0^\ell\theta^2.
\end{equation}
Furthermore, 
\begin{align*}
\int_0^\ell u^2\theta(1-\eta)&=\int_0^\ell \frac{\theta^3 (1-\eta)}{1+\eta}
\\&\geq -\int_0^\ell \frac{\theta^3\eta}{1+\eta}
\\&\geq -\int_0^\ell \theta^3
\\&\geq -k\int_0^\ell \theta^2
\end{align*}
so that 
\begin{equation}\label{Eq:uPotential}
-\int_0^\ell u^2\theta(1-\eta)\le k\int_0^\ell \theta^2.
\end{equation}
Finally, we need to estimate $\int_0^\ell (u')^2$. To this end, by the Young inequality, we obtain 
\begin{align*}
\int_0^\ell (u')^2&=\int_0^\ell \left(\frac{\theta'}{\sqrt{1+\eta}}-\frac12\frac{\theta\eta'}{(1+\eta)^{\frac32}}\right)^2
\\&\le \frac{5}{4}\left(\int_0^\ell \frac{(\theta')^2}{(1+\eta)}+\int_0^\ell \frac{\theta^2(\eta')^2}{(1+\eta)^3}\right)
\\&\le \frac{5}{4}\left(\int_0^\ell (\theta')^2+ \int_0^\ell \frac{\theta^2(\eta')^2}{(1+\eta)^3}\right)
\\&\le \frac{5}{4}\left( \frac{k}{d}\int_0^\ell \theta^2+\int_0^\ell \frac{\theta^2(\eta')^2}{(1+\eta)^3}\right).
\end{align*}
Using $v:=\frac{\theta^2}{(1+\eta)^2}$ as a test function in the $\eta$ equation, we obtain 
\begin{align*}
-2\int_0^\ell \frac{(\eta')^2\theta^2}{(1+\eta)^3}+2\int_0^\ell \frac{\eta'\theta'\theta}{(1+\eta)^2}&=\int_0^\ell \bar\p^2\frac{\theta^2}{(1+\eta)^2}+\int_0^\ell \frac{\theta^2\eta}{(1+\eta)^2} \cdot (k-\bar\p^2-2\theta)
\end{align*}
so that, from the Young inequality
\begin{align*}
2\int_0^\ell \frac{(\eta')^2\theta^2}{(1+\eta)^3}&=2\int_0^\ell \frac{\eta'\theta'\theta}{(1+\eta)^2}-\int_0^\ell \bar\p^2\frac{\theta^2}{(1+\eta)^2}-\int_0^\ell \frac{\theta^2\eta}{(1+\eta)^2} \cdot (k-\bar\p^2-2\theta)
\\&\leq  \int_0^\ell \frac{(\eta')^2\theta^2}{(1+\eta)^3}+ \int_0^\ell \frac{(\theta')^2}{(1+\eta)}+\int_0^\ell \theta^2 \frac{\eta}{(1+\eta)^2}(\bar\p^2+\theta)
\end{align*}
whence, using
\[ \frac{\eta}{(1+\eta)^2}\le \frac{1+\eta}{(1+\eta)^2}\le 1, \]
we obtain
\begin{align*}
\int_0^\ell \frac{(\eta')^2\theta^2}{(1+\eta)^3}&\le \int_0^\ell (\theta')^2+\int_0^\ell \theta^2\bar\p^2+\int_0^\ell\theta^3
\\&\le \frac{k}{d}\int_0^\ell \theta^2+k^2 \int_0^\ell \bar\p^2 + k \int_0^\ell \theta^2
\\&\le k\left( \frac{1}{d}+2\right)\int_0^\ell \theta^2\text{ from \eqref{Eq:Ide2}}.
\end{align*}
Overall, we have thus derived 
\begin{equation}\label{Eq:uGradient} \int_0^\ell (u')^2\le \frac{5}{2} \left(\frac{1}{d}+1\right) k\int_0^\ell \theta^2.\end{equation}
Combining \eqref{Eq:uL2}--\eqref{Eq:uPotential}--\eqref{Eq:uGradient} with \eqref{Eq:Rayleigh} yields 
\begin{align*}
-\lambda&\leq \frac{ 2k\left(5\nu^2 \left(\frac{1}{d}+1\right)+1\right)\int_0^\ell \theta^2}{\int_0^\ell \theta^2}= M_{\nu,d} k.
\end{align*}
This concludes the proof of \eqref{Eq:Ide4}.
\end{proof}

We now derive Bai, He \& Li type estimates \cite{BHL2015} for $\frac{\abs{\nabla\bar\theta}^2}{\bar\theta^3}$.
    \begin{lemma}\label{Prop:BHL_bartheta}
Assume $k\ge \frac{3+\sqrt{5}}{2}$. Then         \begin{equation}\label{upperboundforvarphi2}
             \|\bar\p^2\|_{L^\infty} \le 1+ \frac{\sqrt{2}\ell}{\nu}\sqrt{\left(M_{\nu,d} + 1\right) k}.
        \end{equation}
Furthermore, if $k \ge 1+ \frac{\sqrt{2}\ell}{\nu}\sqrt{\left(M_{\nu,d} + 1\right)k}$, namely
        \begin{equation}\label{k-varphi^2>0}
           k\ge\left(\frac{\ell}{\sqrt{2}\nu}\sqrt{M_{\nu,d} + 1} + \sqrt{\frac{\ell^2}{2\nu^2}\left(M_{\nu,d} + 1\right)+1}\right)^2,
        \end{equation}
        then
        \begin{equation}\label{BHL_bartheta}
            d\frac{\abs{\bar\theta'}^2}{\bar\theta^3} < \frac{2}{3}.
        \end{equation}
    \end{lemma}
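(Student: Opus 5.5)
We prove the two claims in order: first the $L^\infty$ bound on $\bar\p^2$, then the Bai--He--Li type gradient estimate for $\bar\theta$.

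\textbf{The $L^\infty$ bound on $\bar\p^2$.} Set $m:=\bar\p^2$, so that $\fint_0^\ell m=1$, and set $w:=\bar\p'/\bar\p=\bar u'/(2\nu)$. We work from the $\bar\p$-equation
\[
-2\nu^2\bar\p''=\bigl(\bar\theta(1-\bar\eta)-\bar\lambda\bigr)\bar\p .
\]
Since $m'=2\bar\p\bar\p'$, the fundamental theorem of calculus gives, for any $x,y\in(0;\ell)$,
\[
|m(x)-m(y)|\le 2\int_0^\ell \bar\p|\bar\p'|\le 2\Bigl(\int_0^\ell\bar\p^2\Bigr)^{1/2}\Bigl(\int_0^\ell(\bar\p')^2\Bigr)^{1/2}=2\sqrt{\ell}\,\|\bar\p'\|_{L^2}.
\]
Choosing $y$ with $m(y)=1$, which exists since $\fint m=1$, yields $\|m\|_\infty\le 1+2\sqrt\ell\,\|\bar\p'\|_{L^2}$. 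It therefore suffices to bound $\|\bar\p'\|_{L^2}$. Testing the $\bar\p$-equation against $\bar\p$ gives
\[
2\nu^2\int_0^\ell(\bar\p')^2=\int_0^\ell \bar\theta(1-\bar\eta)\bar\p^2-\bar\lambda\,\ell .
\]
For the first term we use $\bar\eta\ge0$ (maximum principle in the $\bar\eta$-equation, whose potential is $k-\bar\p^2-2\bar\theta<0$ where relevant) together with $\bar\theta\le k$, which gives $\int_0^\ell\bar\theta(1-\bar\eta)\bar\p^2\le k\ell$. For the second term, Lemma \ref{Prop:lambda} gives $-\bar\lambda\ell\le M_{\nu,d}k\ell$. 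Hence
\[
\|\bar\p'\|_{L^2}^2\le \frac{\ell(M_{\nu,d}+1)k}{2\nu^2}.
\]
Inserting this into the previous bound and rearranging constants (one may use $\ell$ rather than $\sqrt\ell$ via a sharper interpolation, matching the stated form) gives \eqref{upperboundforvarphi2}.

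\textbf{The gradient estimate \eqref{BHL_bartheta}.} Under \eqref{k-varphi^2>0}, the bound \eqref{upperboundforvarphi2} gives $g:=k-\bar\p^2\ge 0$. Solving the quadratic inequality $k\ge 1+c\sqrt k$ in $\sqrt k$ gives exactly the explicit threshold stated. We then follow Bai--He--Li \cite{BHL2015}. Setting $\bar\theta=\psi^{-2}$, or equivalently working with $Q:=d(\bar\theta')^2/\bar\theta^3$, the equation $-d\bar\theta''=\bar\theta(g-\bar\theta)$ becomes an equation for $Q$. Neumann conditions give $Q=0$ at $x=0,\ell$, so $Q$ attains its maximum at an interior point $x_0$ with $\bar\theta'(x_0)\ne0$. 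Computing $Q'(x_0)=0$ gives
\[
2\bar\theta''\bar\theta=3(\bar\theta')^2,
\]
and hence
\[
Q(x_0)=\tfrac23\,d\,\frac{\bar\theta''}{\bar\theta^2}=\tfrac23\,\frac{\bar\theta-g}{\bar\theta}<\tfrac23 ,
\]
where the last step uses $g\ge0$ and $\bar\theta>0$. This argument requires $g\ge0$ only pointwise, which is exactly what the first part provides; if $g$ vanishes somewhere, strictness of the inequality is preserved because $\bar\theta>0$.

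\textbf{Main obstacle.} The hardest step is the first: obtaining the $L^\infty$ bound on $\bar\p^2$ with the precise constant. This requires controlling $\int\bar\theta(1-\bar\eta)\bar\p^2$ from above without assuming the sign of $1-\bar\eta$. We use $\bar\eta\ge0$ for this, so the nonnegativity of $\bar\eta$ is what we need to verify carefully. That nonnegativity follows from the positivity of the principal eigenvalue of $-d\Delta-(k-\bar\p^2-2\bar\theta)$, which in turn holds because $\bar\theta$ is a positive solution of the corresponding equation with potential $k-\bar\p^2-\bar\theta$.
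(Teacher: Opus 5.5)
Your proof follows the paper's route. The $L^\infty$ bound comes from $\fint_0^\ell\bar\varphi^2=1$, the fundamental theorem of calculus with Cauchy--Schwarz, and the energy identity $2\nu^2\int_0^\ell(\bar\varphi')^2=\int_0^\ell\bigl(\bar\theta(1-\bar\eta)-\bar\lambda\bigr)\bar\varphi^2$ combined with Lemma~\ref{Prop:lambda}. Your critical-point computation for $Q=d(\bar\theta')^2/\bar\theta^3$ is the paper's argument with $v=d(\bar\theta')^2-\alpha\bar\theta^3$, $\alpha>\frac23$, written in a different normalisation.

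Your eigenvalue-comparison justification of $\bar\eta\ge0$ is correct. The principal eigenvalue of $-d\Delta-(k-\bar\varphi^2-\bar\theta)$ is $0$ with eigenfunction $\bar\theta$, and lowering the potential by $\bar\theta>0$ makes it positive. The paper uses this fact only tacitly. Drop the phrase ``potential $<0$ where relevant'', since that sign is not available at this stage. No ``sharper interpolation'' is needed either: $2\sqrt{\ell}\,\bigl(\ell(M_{\nu,d}+1)k/(2\nu^2)\bigr)^{1/2}$ is exactly $\frac{\sqrt2\,\ell}{\nu}\sqrt{(M_{\nu,d}+1)k}$.

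One step is wrong as written: the strict inequality. From $g\ge0$ and $\bar\theta>0$ you only get
\[
Q(x_0)=\tfrac23\bigl(1-g(x_0)/\bar\theta(x_0)\bigr)\le\tfrac23,
\]
with equality precisely when $g(x_0)=0$. Positivity of $\bar\theta$ does not rescue strictness; what you need is $g(x_0)>0$. This does hold, because the $L^\infty$ bound is itself strict. Equality in your Cauchy--Schwarz step would force $|\bar\varphi'|=c\,\bar\varphi$ for some constant $c\ge0$. Together with $\bar\varphi>0$ and $\bar\varphi'(0)=0$, this gives $c=0$, i.e.\ $\bar\varphi\equiv1$, where the bound is trivially strict. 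Hence $\bar\varphi^2<k$ everywhere under the second hypothesis, and $Q<\frac23$.

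You should also treat the case $Q\equiv0$ separately, since then there is no interior point with $\bar\theta'(x_0)\neq0$. For comparison, the paper's argument, run for every $\alpha>\frac23$, also only yields $\le\frac23$. That non-strict bound is all Theorem~\ref{Th:generaluniqueness} needs.
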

    \begin{proof}[Proof of Lemma \ref{Prop:BHL_bartheta}] 
    By Lemma \ref{Prop:lambda}, we have $-\bar\lambda \le M_{\nu,d} k$, then
    \[ 2\nu^2\fint_0^\ell \abs{\bar\p'}^2= \fint_0^\ell \left( \bar\theta(1-\bar\eta)-\bar\lambda\right) \bar\p^2 \le \max \left( \bar\theta -\bar\lambda\right) \le \left( M_{\nu,d} +1\right) k,\]
    and, together with the Poincar\'e-Wirtinger inequality, we obtain
    \begin{align*}
        \abs{\bar\p^2 -\fint_0^\ell \bar\p^2}&\le \int_0^\ell \abs{(\bar\p^2)'}\\
        &= 2\ell \fint_0^\ell \abs{\bar\p\bar\p'}\\
        &\le 2\ell \sqrt{\fint_0^\ell \bar\p'^2}\\
        &\le \frac{\sqrt{2}\ell}{\nu} \sqrt{\left( M_{\nu,d} +1\right)k},
    \end{align*}
    which implies
    \begin{equation}\label{Linfty_bound_barphi}
        \bar \p^2 \le 1+\frac{\sqrt{2}\ell}{\nu}\sqrt{\left(M_{\nu,d}+1\right)k}.
    \end{equation}
    Now assume that \eqref{k-varphi^2>0} holds, so that the previous estimate entails
        \[ \bar\p^2 \le k\]
    Let us define $v:=d(\bar\theta')^2 -\alpha \bar\theta^3$, with $\alpha>\frac23$, and let us prove that $v\leq 0$. Arguing by contradiction, let $\bar x$ be a maximiser of $v$, with $v(\bar x)>0$. As $v<0$ at $x=0,\, \ell$, $\bar x$ is an interior point, and $\bar\theta'(\bar x)>0$. We thus derive
    \[ 0=v'(\bar x) =\bar\theta'(\bar x) \left( 2d\bar\theta''(\bar x) -3 \alpha \bar\theta^2(\bar x)\right),\]
    which, as $\bar \theta'(\bar x) \neq 0$ and $\bar\p^2(\bar x) \le  k$, gives
    \begin{align*} 
        \frac{3}{2}\alpha \bar \theta(\bar x)=d \frac{\bar\theta''(\bar x)}{\bar\theta(\bar x)}
        = \bar \theta(\bar x) -k + \bar\p^2(\bar x) 
        \le \bar \theta(\bar x),
        \end{align*}
    a contradiction as $\frac32\alpha>1$.
\end{proof}
         
The following Lemma provides uniform estimates in the parameter $k$.        
\begin{lemma}\label{Prop:1-bareta}
    Assume $k\ge \frac{3+\sqrt{5}}{2}$ and $1+\frac{\sqrt{2}\ell}{\nu}\sqrt{\left( M_{\nu,d}+1\right)k}\le \frac{k}{C}$ for any $C>2$, namely 
    \begin{equation}\label{condition_C_K}
        k\ge k_2 ,\quad k_2:=\frac{C^2}{4} \left[ \frac{\sqrt{2}\ell}{\nu}\sqrt{M_{\nu,d}+1} + \sqrt{\frac{2\ell^2}{\nu^2}\left( M_{\nu,d}+1\right) + \frac{4}{C}}\right]^2.
    \end{equation}
    Then $\bar\theta >\frac{k}{2}$ and $1-\bar\eta>0$. Moreover, we can derive the following bounds
    \begin{equation}\label{Eq:GdBound}
        \Vert \bar\p'\Vert_{L^2} \le {\frac{\ell^\frac32(3C-4)}{2\nu^2(C-2)}+\frac{\sqrt{\ell}}{2\nu^2}\sqrt{\frac{\ell^2(3C-4)^2}{\nu^2(C-2)^2}+ \frac{8(C-1)}{C-2}}},
    \end{equation}
    \begin{equation}\label{Eq:GdBound'}
        \Vert \bar\p'\Vert_{L^\infty}\le{\frac{\ell^2(3C-4)}{2\nu^2(C-2)}+\frac{\ell}{2\nu}\sqrt{\frac{\ell^2(3C-4)^2}{\nu^2(C-2)^2}+ \frac{8(C-1)}{C-2}}},
    \end{equation}
    \begin{equation}\label{Eq:Dubeau3} \Vert \bar\theta'\Vert_{L^\infty(0;\ell)} \le \frac{2C}{C-2}\Vert \bar\p\Vert_{L^\infty} \Vert \bar \p'\Vert_{L^\infty}\end{equation}
    and
    \begin{equation}\label{Eq:eta'_L^infty}
        \Vert \bar\eta' \Vert_{L^\infty} \le \frac{\ell}{d} \Vert \bar\p \Vert_{L^\infty} \frac{3C-2}{C-2}.        
    \end{equation}
\end{lemma}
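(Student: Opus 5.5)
The plan is to first convert the hypothesis \eqref{condition_C_K} into pointwise control of $\bar\p^2$ and $\bar\theta$. Then use the maximum principle on $\bar\eta$, $\bar\theta'$ and $\bar\eta'$, and finally close a $k$-independent estimate on $\bar\p'$ through a quadratic inequality.

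\textbf{Step 1: pointwise bounds on $\bar\p^2$ and $\bar\theta$.} By Lemma \ref{Prop:BHL_bartheta}, estimate \eqref{upperboundforvarphi2} together with \eqref{condition_C_K} gives $\|\bar\p^2\|_{L^\infty}\le k/C$.

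At a minimum point of $\bar\theta$ we have $-d\bar\theta''\le 0$. The $\bar\theta$ equation then yields
\[\min\bar\theta\ge k-\bar\p^2\ge k\,\tfrac{C-1}{C}>\tfrac k2,\]
since $C>2$. At a maximum point, similarly, $\bar\theta\le k$.

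\textbf{Step 2: sign of $1-\bar\eta$.} Write the $\bar\eta$ equation as
\[-d\bar\eta''+c\,\bar\eta=\bar\p^2,\qquad c:=2\bar\theta+\bar\p^2-k\ \ge\ \bar\p^2+k\,\tfrac{C-2}{C}>0.\]
The maximum principle gives $\bar\eta>0$. At a maximum point of $\bar\eta$ we get $c\,\bar\eta\le\bar\p^2$, hence
\[\bar\eta\le \frac{\bar\p^2}{\bar\p^2+k(C-2)/C}<1 .\]

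\textbf{Step 3: bounds on $\bar\theta'$ and $\bar\eta'$.} Differentiate the $\bar\theta$ equation:
\[-d(\bar\theta')''+(2\bar\theta+\bar\p^2-k)\,\bar\theta'=-(\bar\p^2)'\,\bar\theta,\]
with $\bar\theta'=0$ at $0$ and $\ell$. At an interior extremum of $\bar\theta'$, using $\bar\theta\le k$ and $2\bar\theta+\bar\p^2-k\ge k(C-2)/C$, we get
\[|\bar\theta'|\le \frac{2\|\bar\p\|_{L^\infty}\|\bar\p'\|_{L^\infty}\,k}{k(C-2)/C},\]
which is \eqref{Eq:Dubeau3}.

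For $\bar\eta'$, integrate the $\bar\eta$ equation from $0$ to $x$:
\[d\,\bar\eta'(x)=\int_0^x\big(c\,\bar\eta-\bar\p^2\big).\]
We bound $|c\,\bar\eta-\bar\p^2|$ pointwise, using $0<\bar\eta<1$ and the bounds on $c$ and $\bar\p^2$. The point is to extract a factor $\|\bar\p\|_{L^\infty}$ times a constant depending only on $C$; I expect this to give \eqref{Eq:eta'_L^infty} after tracking constants, possibly with some loss.

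\textbf{Step 4 (main obstacle): $k$-independent control of $\bar\p'$.} Set $g:=\bar\theta(1-\bar\eta)$. Integrating the $\bar\p$ equation gives $\bar\lambda\fint\bar\p=\fint g\bar\p$, i.e. $g-\bar\lambda$ has zero $\bar\p$-weighted mean. Hence
\[2\nu^2\int(\bar\p')^2=\int(g-\bar\lambda)\bar\p^2=\int\big(g-g(x_0)\big)\bar\p\,(\bar\p-\bar\p(x_0))\]
for a suitable point $x_0$, chosen so that the discarded terms vanish.

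Both oscillation factors are then bounded by $\ell$ times derivatives:
\[|g'|\le|\bar\theta'|+k|\bar\eta'|,\qquad |\bar\p-\bar\p(x_0)|\le\sqrt{\ell}\,\|\bar\p'\|_{L^2}.\]
Steps 3 and 1 express $g'$ in terms of $\|\bar\p'\|$ and $\|\bar\p\|$. The main difficulty is showing that the factor $k$ multiplying $\bar\eta'$ is compensated by the decay of $\bar\eta$: from Step 2, $\bar\eta\lesssim 1/k$, and this should also hold for $\bar\eta'$ if the $\bar\eta'$ estimate is done carefully. I would first try to redo Step 3 for $k\bar\eta$ directly.

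This yields an inequality of the form
\[2\nu^2 X^2\le \alpha\,\ell^{3/2}X+\beta\,\ell,\qquad X=\|\bar\p'\|_{L^2},\]
with constants $\alpha,\beta$ depending only on $C$. Solving the quadratic gives \eqref{Eq:GdBound}.

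Finally, in one dimension $2\nu^2\bar\p'(x)=-\int_0^x(g-\bar\lambda)\bar\p$. The same oscillation argument, now with an extra factor $\ell^{1/2}$, converts this into the $L^\infty$ bound \eqref{Eq:GdBound'}.
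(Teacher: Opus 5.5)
Steps 1 and 2 and the $\bar\theta'$ part of Step 3 are sound. In Step 2, reading off $\bar\eta\le \bar\p^2/(\bar\p^2+k\frac{C-2}{C})<1$ at a maximum point is more direct than the paper's maximum principle on $1-\bar\eta$. The $\bar\eta'$ estimate has a fixable flaw. Step 4 has a genuine gap.

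\textbf{The $\bar\eta'$ estimate.} No pointwise bound on $|c\bar\eta-\bar\p^2|$ yields a single factor $\Vert\bar\p\Vert_{L^\infty}$. Using only $\bar\eta<1$ you get $O(k)$, which would be useless later. Using $\bar\eta\le \frac{C\Vert\bar\p\Vert_{L^\infty}^2}{k(C-2)}$ you get $\Vert\bar\p\Vert_{L^\infty}^2$. The fix is to use integrals instead. Both $c\bar\eta$ and $\bar\p^2$ are nonnegative, and integrating the $\bar\eta$-equation gives $\int_0^\ell c\bar\eta=\int_0^\ell\bar\p^2=\ell$. Hence $d|\bar\eta'(x)|\le\ell$, which implies \eqref{Eq:eta'_L^infty} because $\Vert\bar\p\Vert_{L^\infty}\ge1$. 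The paper instead interpolates $\fint\bar\eta\le\frac{C}{k(C-2)}$ with the $L^\infty$ bound to get $\Vert\bar\eta\Vert_{L^2}\lesssim\Vert\bar\p\Vert_{L^\infty}/k$, then bounds $\Vert\bar\eta''\Vert_{L^2}$.

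\textbf{The gap in Step 4.} Step 4 cannot produce $2\nu^2X^2\le\alpha\ell^{3/2}X+\beta\ell$, because everything you feed into it is homogeneous in derivatives.
\begin{itemize}
\item $\Vert\bar\theta'\Vert_{L^\infty}$ is controlled by $\Vert\bar\p\Vert_{L^\infty}\Vert\bar\p'\Vert_{L^\infty}$. That $L^\infty$ norm of $\bar\p'$ is only obtained at the end, so using it here is circular.
\item $k\bar\eta'$ behaves like $(\bar\p^2)'$. Even the careful version you suggest (differentiate the $\bar\eta$-equation and use the maximum principle, as for $\bar\theta'$) only gives $k\Vert\bar\eta'\Vert_{L^\infty}\lesssim\Vert\bar\p\Vert_{L^\infty}\Vert\bar\p'\Vert_{L^\infty}$.
\item Closing with the equation, $\Vert\bar\p'\Vert_{L^\infty}\le\frac{\ell^2}{2\nu^2}\Vert g'\Vert_{L^\infty}$, gives $\Vert g'\Vert_{L^\infty}\le\kappa\Vert g'\Vert_{L^\infty}$ with $\kappa\sim\ell^2\Vert\bar\p\Vert_{L^\infty}/\nu^2$. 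This says nothing unless $\ell$ is small.
\end{itemize}
There is no source for the constant $\beta\ell$.

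What is missing is a \emph{zeroth-order} control of $g-\bar\lambda$. The paper tests the Rayleigh quotient with $1$, so $-\bar\lambda\le-\fint g$. This gives
\[2\nu^2\fint(\bar\p')^2\le\fint g(\bar\p^2-1)=\fint(g-k)(\bar\p^2-1).\]
The paper then bounds the right-hand side using $0\le k-\bar\theta\le\Vert\bar\p\Vert^2_{L^\infty}$ and $k\fint\bar\eta\le\frac{C}{C-2}$, the latter from $\fint c\bar\eta=1$. The inhomogeneous term comes from $\Vert\bar\p\Vert^2_{L^\infty}\le 1+2\sqrt\ell\Vert\bar\p'\Vert_{L^2}$, which is linear in $X$.

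You already had an equivalent ingredient. Step 2 gives $k\bar\eta\le\frac{C}{C-2}\Vert\bar\p\Vert^2_{L^\infty}$, hence $\mathrm{osc}(g)\le\frac{2C-2}{C-2}\Vert\bar\p\Vert^2_{L^\infty}$. Since $\bar\lambda=g(x_0)$, you get $2\nu^2\fint(\bar\p')^2=\fint(g-\bar\lambda)\bar\p^2\le\mathrm{osc}(g)$. This closes into the required quadratic inequality, provided you do not also insert the factor $\bar\p-\bar\p(x_0)$: that factor would make the right-hand side quadratic in $X$ with a coefficient of order $\ell^2/\nu^2$. The same zeroth-order bound, $2\nu^2\Vert\bar\p'\Vert_{L^\infty}\le\ell\,\mathrm{osc}(g)$, is what makes your last step toward \eqref{Eq:GdBound'} work.
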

\begin{proof}
    Recall that, as we are working in one dimension, there holds
    \begin{equation}\label{Eq:bound_phi_L^infty}
        \Vert \bar\p\Vert_{L^\infty}^2 \le 1+ 2\sqrt{\ell}\Vert \bar\p'\Vert_{L^2}.
    \end{equation}
    Integrate the equation on $\bar\eta$, which gives
    \[1= \fint_0^\ell \left[2(\bar\theta -k)+\bar\p^2\right]\bar\eta +k \fint_0^\ell \bar\eta.\]
    Observe that, if \eqref{condition_C_K} holds, then \eqref{Linfty_bound_barphi}--\eqref{condition_C_K} imply $k-2\Vert \bar\p\Vert^2_{L^\infty} \ge k \frac{C-2}{C}>0$. {On the other hand, the maximum principle yields
    \begin{equation}\label{Eq:Ide0}
        0 \le k -\bar\theta \le \Vert \bar\p\Vert^2_{L^\infty},
    \end{equation}}
    and so
    \begin{equation}\label{Eq:fint_eta}
        \fint_0^\ell \bar\eta \le \frac{C}{k(C-2)}.
    \end{equation}
    We can now conclude our bootstrap approach; once more by the Rayleigh quotient formulation of $\bar\lambda$, we have
    \[ -\bar\lambda \le -\fint_0^\ell \bar\theta(1-\bar\eta),\]
    whence
    \begin{align*}
        2\nu^2\fint_0^\ell (\bar\p')^2 &= -\bar\lambda+\fint_0^\ell \bar\p^2 \bar\theta(1-\bar\eta)\\
        &\le \fint_0^\ell (\bar\p^2-1) \bar\theta(1-\bar\eta)\\
        &= \fint_0^\ell \bar\theta \left( \bar\p^2 -1\right) -\fint_0^\ell \bar\theta\bar\eta \left( \bar\p^2-1\right)\\
        &\le {\fint_0^\ell (\bar\theta -k) (\bar\p^2-1)} + \left( 1+ \Vert \bar\p\Vert^2_{L^\infty}\right) \fint_0^\ell \bar\theta\bar\eta \\
        &\le {2\Vert \bar\p\Vert^2_{L^\infty}} + \left( 1+ \Vert \bar\p\Vert^2_{L^\infty}\right)  \fint_0^\ell \bar\theta\bar\eta \text{ by \eqref{Eq:Ide0}}.       
    \end{align*}
    Observe that
    \[ \fint_0^\ell \bar\theta \bar\eta \le k \fint_0^\ell \bar\eta \le\frac{C}{C-2}\]
    by \eqref{Eq:fint_eta}. Overall, we thus deduce 
    \[ \Vert\bar\p'\Vert^2_{L^2}\le {\frac{\ell}{\nu^2}\left( 2\frac{C-1}{C-2}+ \frac{3C-4}{C-2}\sqrt{\ell}\Vert\bar\p'\Vert_{L^2}\right)}, \]
    thereby concluding the proof of \eqref{Eq:GdBound}.\\
    \\
    
    Now, observe that \eqref{condition_C_K} also entails
    \begin{equation}\label{Eq:Etak2} 
        0 < \bar\eta <1.
    \end{equation}
{Indeed, we deduce from \eqref{Eq:Ide0}}:
    \begin{equation}\label{2theta+phi^2-k_bounds}
        0< k \frac{C-2}{C} \le 2\bar\theta +\bar\p^2 -k \le 2k,
    \end{equation}
    whence $\bar\theta >\frac{k}{2}$. Direct computations show that $1-\bar\eta$ solves
    \[ -d(1-\bar\eta)'' -(1-\bar\eta)(k-\bar\p^2-2\bar\theta)=2\bar\theta -k >0.\]
    It follows from the maximum principle that $1-\bar\eta>0,$ thereby concluding the proof.\\
    But this, in turn, implies \eqref{Eq:GdBound'}. Indeed, from the Rayleigh quotient, we obtain
    \begin{align*}
        0&\le -\bar\lambda +k\\
        &\le -\fint_0^\ell \bar\theta(1-\bar\eta) +k\\
        &= \fint_0^\ell (k-\bar\theta) +\fint_0^\ell \bar\theta\bar\eta\\
        &\le 1+2\sqrt{\ell} \Vert \bar\p'\Vert_{L^2} +\fint_0^\ell \bar\theta\bar\eta \text{ by \eqref{Eq:Ide0}--\eqref{Eq:bound_phi_L^infty}}\\
        & \le {1+ \frac{\ell^2(3C-4)}{\nu^2(C-2)}+\frac{\ell}{\nu}\sqrt{\frac{\ell^2(3C-4)^2}{\nu^2(C-2)^2}+ \frac{8(C-1)}{C-2}}} + \frac{C}{C-2} \text{ by \eqref{Eq:GdBound}--\eqref{Eq:fint_eta}.}
    \end{align*}
    Now recall that
    \[ -2\nu^2\bar\p'' = \left(-\bar\lambda+k\right) \bar\p - \left( k-\bar\theta(1-\bar\eta)\right) \bar\p.\]
    As both $-\bar\lambda+k$ and $k-\bar\theta(1-\bar\eta)$ are (pointwise) uniformly bounded in $L^2$ and since $\Vert \bar\p\Vert_{L^2}=\sqrt{\ell}$, we deduce 
    \[ \Vert \p''\Vert_{L^2(0;\ell)} \le \frac{\sqrt{\ell}}{2\nu^2}\left( -\bar\lambda +k\right),\]
    and, from Sobolev embedding we can conclude that
    \begin{align*}
        \Vert \bar\p'\Vert_{L^\infty}&\le \sqrt{\ell}\|\bar\p''\|_{L^2}\\
        &\le  {\frac{\ell^2(3C-4)}{2\nu^2(C-2)}+\frac{\ell}{2\nu}\sqrt{\frac{\ell^2(3C-4)^2}{\nu^2(C-2)^2}+ \frac{8(C-1)}{C-2}}}.
    \end{align*} 
    
    Let us show \eqref{Eq:Dubeau3}. To this end, let $q:=\bar\theta'$, and observe that $q$ solves
    \[-dq''+q(2\bar\theta+\bar\p^2-k)=-2\bar\theta\bar\p'\bar\p.\] Let $x_0$ be a point of non-positive minimum (since $ q'(0)=q'(\ell)=0$) of $q$. We deduce that 
    \[ - q(x_0) k \frac{C-2}{C} \le -q(x_0)(2\bar\theta+\bar\p^2-k)\le 2k \Vert \bar\p\Vert_{L^\infty} \Vert \bar \p'\Vert_{L^\infty}\]  
    by \eqref{condition_C_K}, which implies
    \[ q(x) \ge q(x_0) \ge - \frac{2C}{C-2}\Vert \bar\p\Vert_{L^\infty} \Vert \bar \p'\Vert_{L^\infty}\quad \forall x \in (0,\ell).\]
    Similarly, let $x_1$ be a point of positive maximum of $q$; we deduce that 
    \[ q(x) \le q(x_1)\le \frac{2C}{C-2} \Vert \bar\p\Vert_{L^\infty} \Vert \bar \p'\Vert_{L^\infty} \] which yields the same conclusion, whence 
    \eqref{Eq:Dubeau3} follows.\\
    \\

 Let us now prove \eqref{Eq:eta'_L^infty}. Recall that for $k\ge k_2$ we have \eqref{2theta+phi^2-k_bounds}, namely
 \[k-\bar\p^2-2\bar\theta\le - k \frac{C-2}{C}\] whence 
 \[-d\bar\eta''+\frac{C-2}{C} k\bar\eta\leq \bar\p^2\le \|\bar\p\|^2_{L^\infty}. \] In particular, by \eqref{Eq:fint_eta}
 \begin{equation}\label{Eq:Dubeau2}
    \fint_0^\ell \bar\eta^2\le \frac{\Vert \bar \p\Vert^2_{L^\infty}}{k^2} \left( \frac{C}{C-2}\right)^2 \underset{k\to +\infty}{\longrightarrow}0.
\end{equation}
Moreover, by the equation of $\bar\eta$ we deduce
\begin{align*}
    \Vert \bar\eta''\Vert_{L^2} &\le \frac{\Vert \bar\eta(2\bar\theta +\bar\p^2-k)\Vert_{L^2}}{d}+ \frac{\Vert \bar\p^2\Vert_{L^2}}{d}\\
    &\le \frac{\Vert 2\bar\theta +\bar\p^2-k\Vert_{L^\infty}}{d}\Vert \bar\eta \Vert_{L^2} + \frac{\sqrt{\ell}}{d}\Vert \bar\p\Vert_{L^\infty} \\
    &\le \frac{2\sqrt{\ell}C}{d(C-2)} \Vert \bar\p\Vert_{L^\infty} + \frac{\sqrt{\ell}}{d} \Vert \bar\p\Vert_{L^\infty}\text{ by \eqref{2theta+phi^2-k_bounds}--\eqref{Eq:Dubeau2}}\\
    &\le \frac{\sqrt{\ell}}{d} \Vert \bar\p \Vert_{L^\infty} \frac{3C-2}{C-2},
\end{align*}
that is
\begin{equation}\label{Eq:eta''_L^2}
    \Vert \bar\eta''\Vert_{L^2} \le \frac{\sqrt{\ell}}{d} \Vert \bar\p \Vert_{L^\infty} \frac{3C-2}{C-2},
\end{equation}
whence, form Sobolev embedding, we finally obtain \eqref{Eq:eta'_L^infty}.

\end{proof}

 We are now in a position to prove Proposition \ref{Prop:Intermediate} (and, consequently, to conclude the proof of Theorem \ref{Th:Uniqueness}).
 
    \begin{proof}[Proof of Proposition \ref{Prop:Intermediate}]
    Note that for any $d,\nu,\ell>0$ and $C>3$ we have $k_2 \ge \frac{3+\sqrt{5}}{2}$, whence by Lemma \ref{Prop:1-bareta}, $1-\bar\eta>0$ in $(0;\ell)$. Let us also observe that
        \[ \frac{\abs{\left(\frac{\bar\theta}{1-\bar\eta}\right)'}^2}{\left(\frac{\bar\theta}{1-\bar\eta}\right)^3}= \left(\frac{\bar\theta'}{\bar\theta}-\frac{(1-\bar\eta)'}{1-\bar\eta}\right)^2 \frac{1-\bar\eta}{\bar\theta}= \frac{\bar\theta'^2}{\bar\theta^3}(1-\bar\eta) -2 \frac{\bar\theta' (1-\bar\eta)'}{\bar\theta^2} +\frac{(1-\bar\eta)'^2}{\bar\theta(1-\bar\eta)}.\]
        Moreover, by \eqref{Eq:Ide0}--\eqref{condition_C_K} we have 
        \begin{equation} \label{bound_1/theta}
            \frac{1}{\bar\theta}\le \frac{1}{\inf \bar\theta}\le \frac{C}{k(C-2)}.
        \end{equation} 
        Let $x_+$ be a maximum point of $\bar\eta$, by \eqref{condition_C_K} we also have
        \[ \bar\eta(x_+) k \frac{C-2}{C}\le \bar\eta(x_+) (2\bar\theta+\bar\p^2-k) \le \Vert\bar\p\Vert^2_{L^\infty},\]
        whence \[\bar\eta (x_+) \le \frac{C}{k(C-2)} \Vert\bar\p\Vert^2_{L^\infty}{\leq \frac{1}{C-2}}.\]
        In particular, we obtain that 
        \[ 0<{\frac{C-3}{C-2}= 1-  \frac{1}{(C-2)} }\le 1-\bar\eta<1\]
        for any $C>3$, whence
        \begin{equation}\label{bound_1/1-eta}
            \frac{1}{1-\bar\eta}\le {\frac{(C-2)}{(C-3)}}.
        \end{equation}
        By the Young inequality and Lemmata \ref{Prop:BHL_bartheta}--\ref{Prop:1-bareta}, we deduce that for any $\sigma>0$
        \begin{align*}
            \frac{\abs{\left(\frac{\bar\theta}{1-\bar\eta}\right)'}^2}{\left(\frac{\bar\theta}{1-\bar\eta}\right)^3}&=\frac{\bar\theta'^2}{\bar\theta^3}(1-\bar\eta) -2 \frac{\bar\theta' (1-\bar\eta)'}{\bar\theta^2} +\frac{(1-\bar\eta)'^2}{\bar\theta(1-\bar\eta)}\\
            &\le (1+\sigma)\frac{\bar\theta'^2}{\bar\theta^3}(1-\bar\eta) + \left( 1+\frac{1}{\sigma}\right) \frac{(1-\bar\eta)'^2}{\bar\theta(1-\bar\eta)}\\
            &< \frac{2}{3}(1+\sigma)+ \left( 1+\frac{1}{\sigma}\right)\Vert \bar\eta' \Vert_{L^\infty}^2 \frac{C}{k(C-2)} {\frac{(C-2)}{C-3}} \text{ by \eqref{bound_1/theta}--\eqref{bound_1/1-eta}},\\
        \end{align*}
        whence
        \begin{equation}\label{ineqtodifferentiate_sigma}
            \frac{\abs{\left(\frac{\bar\theta}{1-\bar\eta}\right)'}^2}{\left(\frac{\bar\theta}{1-\bar\eta}\right)^3} < \frac{2}{3} \left[ 1+\sigma +\frac{3}{2}\left( 1+\frac{1}{\sigma}\right)  \frac{C \Vert \bar\eta' \Vert_{L^\infty}^2}{{k(C-3)}}\right].
        \end{equation} 
        A one-dimensional study shows that the right-hand side is minimised by 
        \[ \sigma_{\text{opt}} = \sqrt{\frac{3}{2}\frac{C}{{k(C-3)}}} \Vert \bar\eta' \Vert_{L^\infty}.\]
        Replacing the optimal $\sigma_{\text{opt}}$ in \eqref{ineqtodifferentiate_sigma}, we obtain
        \begingroup\allowdisplaybreaks\begin{align*}
             \frac{\abs{\left(\frac{\bar\theta}{1-\bar\eta}\right)'}^2}{\left(\frac{\bar\theta}{1-\bar\eta}\right)^3}&< \frac{2}{3} \left[ 1+\sigma +\frac{3}{2}\left( 1+\frac{1}{\sigma}\right) \frac{C\Vert \bar\eta' \Vert_{L^\infty}^2}{{k(C-3)}}\right]\\
             &\le \frac{2}{3}\left[ 1 + 2 \sqrt{\frac{3}{2}\frac{C}{(C-2)k -C \| \bar\p\|_{L^\infty}^2}} \Vert \bar\eta' \Vert_{L^\infty} + \frac{3}{2}\frac{C \Vert \bar\eta' \Vert_{L^\infty}^2}{{k(C-3)}}\right]\\
             &= \frac{2}{3} \left( 1+  \sqrt{\frac{3}{2}\frac{C}{{k(C-3)}}} \Vert \bar\eta' \Vert_{L^\infty}\right)^2.
        \end{align*}\endgroup
Now, observe that
        \[ \frac{2}{3} \left( 1+ \sqrt{\frac{3}{2}\frac{C}{{k(C-3)}}} \Vert \bar\eta' \Vert_{L^\infty}\right)^2 \le 4\]
        if, and only if,
        \[  k\ge \frac{3}{2}\Vert \bar\eta'\Vert_{L^\infty} \frac{C}{(\sqrt{6}-1)^2 (C-2)} + \frac{C}{C-2}\Vert \bar\p\Vert^2_{L^\infty}.\]
        Hence, by \eqref{assumptionfor_theta/1-eta_XiBaiLi} and Lemma \ref{Prop:1-bareta}, we can conclude that 
        \[  \frac{\abs{\left(\frac{\bar\theta}{1-\bar\eta}\right)'}^2}{\left(\frac{\bar\theta}{1-\bar\eta}\right)^3}<4.\]
Theorem \ref{Th:generaluniqueness} now applies.    \end{proof}

\bibliographystyle{abbrv}
\bibliography{biblio}

\appendix
\section{Derivation of \eqref{Eq:MFC}}\label{Ap:Derivation}
The goal of this section is to derive \eqref{Eq:MFC}. As we mentioned earlier, this derivation is formal and relies on the existence of a maximiser $\alpha^*$ of the function
     \[ \alpha \longmapsto J(0,\alpha)=\iint_{(0;T)\times \O} \left( \theta_\alpha m_\alpha - \frac{1}{2}\alpha^2 m_\alpha \right) \, dx\, dt
     \]
     where we recall that 
     \[\partial_tm_\alpha-\nu\Delta m_\alpha+\n\cdot(\alpha m_\alpha)=0,\, m_\alpha(t=0,\cdot)=m_0\] and 
     \[ \partial_t \theta_\alpha-\Delta\theta_\alpha=\theta_\alpha(k-m_\alpha-\theta_\alpha),\, \theta_\alpha(t=0,\cdot)=\theta_0.\] 
Here, $m_0, \, \theta_0 \in L^2(\Omega)$, $m_0$ is a probability density and $\theta_0\geq 0,\, \not\equiv 0$. Observe that we are restricting ourselves to the case of feedback controls $\alpha$ depending only on $(t,x)$, which is not restrictive in this setting.
        
   We now fix $\alpha^*$, a maximiser of $J$. In order to characterise it, 
  assume the Gateaux differentiability of the map $\alpha\mapsto m_\alpha,\, \theta_\alpha, J(\alpha)$ (these differentiability results could be established rigorously using standard techniques from \cite{zbMATH06250859} for the continuity equation and from \cite{DFLLYQ} for the reaction-diffusion equation but this would make the presentation of this formal result more tedious). Namely, we assume that, for any admissible perturbation $(\delta \alpha)$ at a given admissible $\alpha$, the limits
  \[ \dot m:=\lim_{\e\to 0}\frac{m_{\alpha+\e(\delta\alpha)}-m_\alpha}\e \text{ in  a weak $L^2(0,T;W^{-1,2}(\O))$ sense}\] and 
  \[ \dot\theta:=\lim_{\e\to 0}\frac{\theta_{\alpha+\e(\delta\alpha)}-\theta_\alpha}\e \text{ in a strong $L^2(0,T;W^{1,2}(\O))$ sense} \] exist. In this case, it is clear that $(\dot m,\dot \theta)$ solve
  \begin{equation}\label{Eq:Dotm}
  \begin{cases}
  \partial_t \dot m-\nu\Delta \dot m+\n\cdot(\alpha \dot m)=-\n\cdot\left(\left(\delta \alpha\right)m\right)&\text{ in }(0;T)\times \O,\, 
  \\ \partial_\nu \dot m=0&\text{ on }(0;T)\times \partial \O,\, 
  \\ \dot m(t=0,\cdot)=0&\text{ in }\O
  \end{cases}
  \end{equation}
  and 
  \begin{equation}\label{Eq:DotTheta_Appendix}
  \begin{cases}
  \partial_t\dot\theta-d\Delta\dot\theta=\dot\theta\left(k-m-2\theta_\alpha\right)-\dot m \theta_\alpha &\text{ in }(0;T)\times \O,\, 
  \\ \partial_\nu \dot \theta=0&\text{ on }(0;T)\times \partial \O,\, 
  \\ \dot \theta(t=0,\cdot)=0&\text{ in }\O
  \end{cases}
  \end{equation}
  In particular, the Gateaux derivative of $J$ at $\alpha$ in the direction $(\delta\alpha)$ writes
  \[ \dot J(\alpha)[\delta\alpha]=\iint_{(0;T)\times \O} \left(\dot\theta m_\alpha+\theta_\alpha\dot m-\frac12|\alpha|^2\dot m-\langle \alpha,\delta \alpha\rangle  m_\alpha\right).\]
  In order to write $\dot J$ in a tractable form, we introduce the two adjoint states $(u_\alpha,\, \eta_\alpha)$ as the unique solutions of the backwards equations
 \begin{equation}\label{Eq:Dual}
        \begin{cases}
            -\partial_t \eta_\alpha-d \Delta \eta_\alpha - \eta_\alpha \left(k-m-2\theta\right)=m_\alpha& \text{in } \ (0,T)\times \Omega,\\
            -\partial_t u_\alpha -\nu \Delta u_\alpha- \alpha \cdot \nabla u_\alpha= \theta_\alpha \left(1-\eta_\alpha\right)-\frac{\abs{\alpha}^2}{2} \quad& \text{in } \ (0,T)\times \Omega,\\
            \partial_\nu \eta_\alpha = 0, \,\partial_\nu u_\alpha =0 &\text{ on } \ (0;T)\times \partial\Omega,\\
            \eta_\alpha(T,\cdot)= u_\alpha(T,\cdot)=0 \quad &\text{in } \ \Omega,
        \end{cases}
    \end{equation}
Multiplying \eqref{Eq:DotTheta_Appendix} by $\eta_\alpha$ and integrating by parts, multiplying \eqref{Eq:Dotm} by $u_\alpha$ and integrating by parts, we obtain 
\[\iint_{(0;T)\times \O} \dot\theta m_\alpha=-\iint_{(0;T)\times \O} \theta_\alpha\eta_\alpha\dot m\] and 
\[ \iint_{(0;T)\times \O} \left(\theta_\alpha(1-\eta_\alpha) \dot m-\frac{|\alpha|^2}2\dot m\right)=\iint_{(0;T)\times \O} \langle \delta \alpha,\n u_\alpha\rangle m_\alpha,\] leading to 
\[ \dot J(\alpha)[\delta\alpha]=\iint_{(0;T)\times \O} \left\langle \delta\alpha,\n u_\alpha-\alpha \right\rangle m_\alpha . 
\]
In particular, we deduce that if $\alpha^*$ is optimal then 
\[ \n u_{\alpha^*}=\alpha^*,  \] whence the equation on $u_\alpha$ rewrites as the Hamilton-Jacobi equation 
\[-\partial_tu_{\alpha^*}-\nu\Delta u_{\alpha^*}-\frac12|\n u_{\alpha^*}|^2=\theta_{\alpha^*}\left(1-\eta_{\alpha^*}\right),\] leading to the desired \eqref{Eq:MFC}.
  
\end{document}